\theoremstyle{plain}
\newtheorem{theorem}{Theorem}[section]
\newtheorem{lemma}[theorem]{Lemma}
\newtheorem{cor}[theorem]{Corollary}
\newtheorem{conj}[theorem]{Conjecture}
\newtheorem{definition}[theorem]{Definition}
\newtheorem{question}[theorem]{Question}
\newtheorem{prop}[theorem]{Proposition}
\theoremstyle{remark}
\newtheorem*{remark*}{Remark}
\newtheorem*{theorem*}{Theorem}
\newtheorem*{lemma*}{Lemma}
\theoremstyle{definition}
\newtheorem*{definition*}{Definition}
\newtheorem*{example*}{Example}
\newtheorem*{prop*}{proposition}
\theoremstyle{remark}
\numberwithin{equation}{section} 
\newcommand{\Ind}{\mathcal{I}^n\big{(}[0,1],\mathbb{R}^d\big{)}}
\newcommand{\In}{\mathcal{I}^{n}\big{(}[0,1],\mathbb{R}\big{)}}
\DeclareMathOperator{\diff}{\mathbf{diff}_{H^n}\big([0,1]\big)}
\DeclareMathOperator{\diffp}{\mathbf{diff}_{H^n}^+\big([0,1]\big)}
\DeclareMathOperator{\diffm}{\mathbf{diff}_{H^n}^-\big([0,1]\big)}
\DeclareMathOperator{\diffO}{\mathbf{diff}^0_{H^n}\big([0,1]\big)}
\DeclareMathOperator{\len}{\mathrm{len}}
\titleformat{\chapter}[hang]{\bf\huge}{\thechapter}{2pc}{}
\titlespacing*{\chapter}{0pt}{-80pt}{30pt}
\title{HUJI M.Sc.~Thesis Template}
\title{Characterization of the Metric Completion of
Immersed Open-Curve Spaces}
\author{Ronny Gelman \\
\normalsize Advisor: Cy Maor}
\date{}
\begin{document}

%\includepdf[pages=-]{HUJI_MSc_Thesis__Hebrew_part}
%\input{cover}
%\input{cover_heb}
%\input{cover_sig}

%\frontmatter

\renewcommand{\baselinestretch}{1.5}
\maketitle

\begin{abstract}
The completeness properties of spaces of immersed curves equipped with reparametrization-invariant Riemannian metrics have recently been the subject of active research. This thesis studies the metric completion of spaces of immersed open curves endowed with Sobolev-type metrics and examines a previously proposed conjecture that suggests the metric completion consists of a single additional point, representing all vanishing-length Cauchy sequences. We disprove the conjecture in the setting of real-valued immersed curves by demonstrating the existence of multiple distinct limit points. Furthermore, we provide a nearly complete characterization of the metric structure of the metric completion in this case. These results lead to a revised conjecture regarding the structure of the metric completion in more general settings.
\end{abstract}
\subsubsection*{Acknowledgements}
I would like to express my deepest gratitude to my advisor, Prof. Cy Maor, for his support and guidance.
Our many discussions were instrumental to the development of this thesis, and more importantly, my growth as a mathematician.
His mentorship extended well beyond the technical aspects of this work. I am immensely grateful for his invaluable advice regarding my future aspirations, help and encouragement in pursuing new opportunities, and his patient and thoughtful feedback, even when my ideas were scattered and disorganized. \par
I am also extremely thankful to my wife, Alex, for her emotional support through the many ups and downs during this time. She listened when I repeatedly shared fragments of ideas, encouraged me when I failed, and celebrated my happiness when slight progress was made. \par
I am further grateful to my friends, family, and colleagues from the Einstein Institute of Mathematics and from the Federmann Center for the Study of Rationality for their support.

\tableofcontents

\newpage

%\mainmatter

\section{Introduction}
\label{chap:intro}
In this thesis, we consider the space of Euclidean-valued immersed open curves, defined as 
\[ \mathcal{I}^n\big{(}[0,1],\mathbb{R}^d\big{)}\coloneqq \bigg\{c\in H^n\big((0,1),\mathbb{R}^d\big)\bigg| \  c'(\theta)\ne 0 \quad  \forall \theta \in (0,1) \bigg\} \]
for $\mathbb{N}\ni n\ge 2$  and $ d\in \mathbb{N} $.
It is an open subset of the Hilbert space $ H^n\big((0,1),\mathbb{R}^d\big) $, and as such, it can be viewed as a Hilbert manifold for which the tangent space at any point $ c $ is 
\[ T_c \mathcal{I}^n\big{(}[0,1],\mathbb{R}^d\big{)}\cong  H^n\big((0,1),\mathbb{R}^d\big). \]
We endow it with a Riemannian metric of the following type
\[  G_c(h,k)  \coloneqq \sum_{i=1} ^n a_i \intop_0 ^1 \langle \nabla_{\partial_s} ^i h,\nabla_{\partial_s} ^i k\rangle_{\mathbb{R}^d}\ ds,\quad a_0,a_n>0,\ a_i\ge0, \] 
where $ \nabla_{\partial_s}h=\frac{1}{|c'|}\partial_\theta h \ $  and $ ds=|c'|d\theta$.

%% In-completness
This structure results in a metrically-incomplete space, as demonstrated by the following simple construction of a path leaving the space in finite time:
Let $\|v\|=1$ be a unit vector in $\mathbb{R}^d$, and define
\begin{align*}  
c:[0,1)&\rightarrow \mathcal{I}^n\big{(}[0,1],\mathbb{R}^d\big{)} \\
 t&\mapsto c(t)(\theta)\coloneqq v(1-t)\theta.
\end{align*} 
This path starts from the initial curve $c(0):[0,1]\rightarrow \mathbb{R}^d$, a constant-speed unit segment in direction $v$, and linearly shrinks it as $t\rightarrow1$. A direct calculation shows this path has finite length w.r.t.\ the Riemannian metric $G$. The limit curve, however, must be the constant curve at the origin, which is not an immersion and, therefore, is not in $\Ind$.

The purpose of this thesis is to explore the metric completion of the space $\mathcal{I}^n\big{(}[0,1],\mathbb{R}^d\big{)}$. It follows from \cite[Theorem~6.3]{Bauer2020} that any non-convergent Cauchy sequence $ (c_m)_{m=1}^\infty$  must have vanishing curve length, i.e.,
\[ \lim_{m\rightarrow\infty} \int_0 ^1 |  c_m ' | \  d\theta =0.\]
Moreover, \cite[Question~6.4]{Bauer2020} 
conjectures the following:
\begin{conj}
\label{conj: Thesis question}
    Let $\Ind$, for $d\ge1$ and $n\ge2$, be endowed with the Riemannian metric $G$, and let $d_G$ be the induced geodesic distance on $\Ind$.
    Then, the metric completion of $\left(\Ind,d_G\right)$ consists of $\Ind\cup\{0\}$, where ${0}$ is the limit point of all vanishing-length Cauchy sequences.
    \end{conj}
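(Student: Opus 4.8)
A metric completion is the set of equivalence classes of Cauchy sequences, and by \cite[Theorem~6.3]{Bauer2020} the only non-convergent Cauchy sequences in $\left(\In,d_G\right)$ are those of vanishing length $\ell(c):=\int_0^1|c'|\,d\theta\to0$; so the statement is equivalent to the single assertion that any two vanishing-length Cauchy sequences $(c_m)$, $(\tilde c_m)$ satisfy $d_G(c_m,\tilde c_m)\to0$. The plan is to deduce this from a \emph{uniform collapse} estimate: a function $\delta$ with $\delta(\varepsilon)\to0$ as $\varepsilon\to0$, a distinguished model family $\{c^\star_\varepsilon\}$ of constant-speed straight segments of length $\varepsilon$ based at the origin (one in each of the two connected components of $\In$, which are distinguished by the sign of $c'$), and a bound $d_G\!\left(c,c^\star_{\ell(c)}\right)\le\delta(\ell(c))$ valid for every short $c$. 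Together with the elementary fact that $d_G(c^\star_\varepsilon,c^\star_{\varepsilon'})\to0$ as $\varepsilon,\varepsilon'\to0$ --- obtained by bounding the $G$-length of the straight-line homotopy $t\mapsto(1-t)\,c^\star_\varepsilon+t\,c^\star_{\varepsilon'}$ by an elementary integral of size $O(\sqrt{\max(\varepsilon,\varepsilon')})$ --- the triangle inequality then closes the argument and identifies the unique ideal point with the common limit ``$0$'' of the $c^\star_\varepsilon$.

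Everything thus reduces to joining an arbitrary short curve to a short straight segment by a short path. Over $\mathbb{R}^d$ one would attempt this in two stages --- first run a rescaled curve-shortening or elastic gradient flow to remove the curvature and reach a short segment, then reconcile the two parametrizations --- but for real-valued curves the first stage is vacuous, since every immersed $c\in\In$ is strictly monotone, hence is literally a reparametrized straight segment; the whole burden therefore falls on reparametrizing $c$, of length $\varepsilon$, to constant speed at total $G$-cost tending to $0$ with $\varepsilon$. \emph{This is the step I expect to be the genuine obstacle, and I suspect it cannot be done.} The reason is a scaling mismatch inside $G$: on the submanifold of curves of a fixed small length $\ell$, a reparametrization that actually moves the order-$n$ part of the speed density costs $\asymp a_n\,\ell^{\,1-2n}$ times the square of a homogeneous $\dot H^{n}$-seminorm of the deformation, and since $n\ge2$ this blows up as $\ell\to0$; the only apparent escape --- grow the curve back to length of order one, reparametrize there, shrink again --- fails because the $G$-length of the pure-rescaling legs of such a detour tends to a strictly positive constant as $\varepsilon\to0$, not to $0$.

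Concretely I would test the conjecture against $c_m=\tfrac1m\phi_0$, where $\phi_0$ is a fixed non-constant-speed $H^n$-reparametrization of the unit segment. This sequence is Cauchy and non-convergent in $\In$ (its lengths tend to $0$): consecutive terms lie on the rescaling path $t\mapsto t\phi_0$, along which $\lVert\partial_t(t\phi_0)\rVert_G^2=a_0\,t\!\int_0^1\!\phi_0^{\,2}|\phi_0'|\,d\theta+a_1\,\ell(\phi_0)/t$ --- the covariant $s$-derivatives of order $\ge2$ of the velocity $\phi_0$ all vanish in one dimension --- and the square root of this density is integrable at $t=0$. The decisive point would then be the lower bound $\liminf_m d_G\!\left(\tfrac1m\phi_0,\tfrac1m\operatorname{id}\right)>0$, which I would attack by splitting an arbitrary connecting path into a length component and a parametrization component and extracting the $a_n\,\ell^{\,1-2n}$ penalty via an interpolation inequality. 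If this bound holds --- as the scaling heuristic strongly suggests and as the abstract of this thesis announces --- then the uniform-collapse step is impossible and \cref{conj: Thesis question} fails already for $d=1$, the correct picture being that the ideal points are indexed, modulo a (possibly trivial) finite-dimensional family of ``cheap'' low-order reparametrizations, by the limiting normalized speed-density profile, with a boundary metric obtained by optimizing over the intermediate length scale the cost of growing, reparametrizing, and shrinking. (Even more elementarily, the two connected components already prevent the completion from being literally $\In\cup\{0\}$; the substance, though, is the parametrization profile.)
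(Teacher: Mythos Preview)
Your conclusion is correct --- the paper disproves the conjecture for $d=1$ with exactly the test sequences you propose, $\ell_m\varphi$ versus $\ell_m\psi$ for distinct $\varphi,\psi\in\diffp$ --- and your scaling heuristic correctly locates the obstruction. But the mechanism by which the paper turns this into a rigorous lower bound is quite different from your proposed ``split into length and parametrization components and interpolate,'' and the difference matters.

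The paper does not decompose the path. Instead, for any path $\gamma\in\Gamma(\lambda\varphi,\lambda\psi)$ it bounds $\len(\gamma)\ge\sqrt{a_2}\int_0^1\|\nabla_{\partial_s}^2\gamma_t\|_{L^2(ds)}\,dt$, passes to $L^1(ds)$ by Cauchy--Schwarz at the cost of a factor $\ell_{\gamma(t)}^{-1/2}$, and then uses the one-dimensional identity
\[
\nabla_{\partial_s}^2\gamma_t=\tfrac{1}{\gamma'}\,\partial_\theta\!\left(\tfrac{\gamma_t'}{\gamma'}\right)=\tfrac{1}{\gamma'}\,\partial_\theta\partial_t\ln\gamma'
\]
to make $\|\nabla_{\partial_s}^2\gamma_t\|_{L^1(ds)}\ge\big|\partial_t\ln\gamma'(t,\theta_1)-\partial_t\ln\gamma'(t,\theta_0)\big|$ for any $\theta_0,\theta_1$. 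Integrating in $t$ telescopes to a quantity depending only on the endpoints, namely
\[
\Delta(\varphi,\psi):=\max_{\theta_0,\theta_1}\Big|\ln\tfrac{\varphi'(\theta_1)}{\psi'(\theta_1)}-\ln\tfrac{\varphi'(\theta_0)}{\psi'(\theta_0)}\Big|,
\]
yielding $\len(\gamma)\ge\Delta(\varphi,\psi)\sqrt{a_2/\max_t\ell_{\gamma(t)}}$. The growth escape you worried about is then handled not by costing the rescaling legs, but by the Lipschitz continuity of $c\mapsto\ell_c^{3/2}$: any path of length $\le 2D$ stays in a fixed metric ball on which $\ell_c\le L$, so the bound becomes $\Delta(\varphi,\psi)\sqrt{a_2/L}>0$ uniformly in $\lambda$.

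Two consequences worth flagging. First, the argument uses the order-two term specifically --- it needs $a_2>0$, not merely $a_n>0$ --- because the log-derivative trick lives at second order; whether the result persists for $n>2$ with $a_2=0$ is left open in the paper. Your $a_n\ell^{1-2n}$ heuristic does not see this distinction. Second, the invariant $\Delta(\varphi,\psi)$ gives more than a bare lower bound: it shows that $\bar d_G$-Cauchy sequences in $\diffO$ are $C^1$-Cauchy as diffeomorphisms, so the ideal boundary sits between $\diff$ and $\mathcal{D}^1([0,1])$ --- sharper than the ``normalized speed-density profile modulo cheap low-order reparametrizations'' picture you sketch. Your interpolation route might recover the existence of distinct limits, but it is unlikely to deliver this endpoint-only invariant or the $C^1$ control without finding a comparable exact-derivative structure.
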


\Cref{subsec:example analysis} focuses on extending the existing example of a diverging Cauchy sequence, \cite[Example~6.1]{Bauer2020},  by examining straight-line segment curve sequences. We establish new estimates on the geodesic distance of certain types of paths, and use these to conclude that the limit point of a vanishing-length straight-line Cauchy sequence depends solely on the choice of parametrizations (\Cref{cor: rotation-translation invariance}). As a consequence, we ask the following:
Is it possible that two curve sequences of the form $\ell_m e_1\varphi(\theta)$, and $\ell_m e_1\psi(\theta)$, for $\psi\ne\varphi\in\diff$ and $\ell_m\searrow0$, converge to the same limit point in the metric completion space? This is a question that can also be examined in one-dimensional settings $\In$.\par

\Cref{subsec:one dimensional resluts}, answers the question above negatively for the case of $ d=1$ and a metric $G$ satisfying $a_2>0$, and, in particular, disproves \Cref{conj: Thesis question}. In these settings, we prove
\Cref{thm: distinct limit points}, which establishes the existence of a family of distinct limit points, each corresponding to a $H^2$-diffeomorphism of the interval $[0,1]$. Furthermore, \Cref{cor: final char} provides a nearly-complete characterization of the metric completion space.\par

\Cref{subsec: extension to multi-dimensional case} presents additional estimates on paths in $\Ind$, with $d>1$, and provides an additional construction of a diverging Cauchy sequence. \Cref{chap: Discussion} briefly touches on the possible future directions for extending the results in \Cref{subsec:one dimensional resluts} to more general settings.

\section{Background and Settings}
\label{chap: Background and settings}

This section is structured as follows: 
\Cref{subsec:Hilbert Manifold} presents the basic definitions of Hilbert manifolds, Riemannian metrics, and geodesic distance, highlighting the distinctions from finite-dimensional manifolds.\par
In \Cref{subsec: open curve immersions}, we introduce $\Ind$, the space of immersed open curves, and prove the basic properties required for the subsequent analysis. 
 \Cref{subsec: closed curve} introduces the space of immersed closed curves $\mathcal{I}^n\big(S^1,\mathbb{R}^d\big)$ as a sub-manifold of $\Ind$. \Cref{subsec: Previous Work} discusses previous work on the completeness properties of immersed curve spaces, and specifically the differences between the spaces of closed and open curve immersions. 

\subsection {Hilbert Manifolds: Basic Definitions}
\label{subsec:Hilbert Manifold}
We begin with a brief presentation of Hilbert manifolds, Riemannian metrics, and the induced geodesic distance.   \par

\begin{definition}
   A (real) \textbf{Hilbert space} is a real inner product space, which is complete with respect to the metric induced by the inner product.
\end{definition}
The definition of Hilbert manifolds follows the lines of the classical finite-dimensional case, with the distinction that the model space used in the definition of the charts is a Hilbert space rather than a finite-dimensional Euclidean space. This definition can be extended to locally convex topological vector spaces as model spaces. A detailed review of such spaces, which are outside the focus of this thesis, can be found in the first chapters of \cite{SchmedingAlexander2022AItI}.

\begin{definition}
    Let $\mathcal{H}$ be a Hilbert space, a \textbf{ smooth atlas}, $\mathcal{A}$,  for a Hausdorff topological space  $M$ is a collection of maps  $\varphi:U_\varphi\rightarrow V_\varphi$ where $V_\varphi\subset \mathcal{H}$ and $U_\varphi\subset M$ are open subsets, and $\varphi$ is a homeomorphism, satisfying
    \begin{enumerate}
        \item $M=\bigcup_{\varphi\in\mathcal{A}}U_\varphi$.
        \item For all $\varphi,\psi\in\mathcal{A}$, the composition $\varphi\circ\psi^{-1}$ is a smooth map $\mathcal{H}\supset V_\psi\rightarrow\mathcal{H}$.
    \end{enumerate}
    Each map $\varphi$ is called a \textbf{chart}.
    We say that a pair of smooth atlases $\mathcal{A},\mathcal{A}'$ for the space $M$, are equivalent if their union $\mathcal{A}\cup\mathcal{A}'$ is also a smooth atlas for $M$.
\end{definition}
The equivalence of smooth atlases is an equivalence relation, which allows the following definition:
\begin{definition}
    A \textbf{smooth Hilbert manifold}, $\big{(}M,\mathcal{A}\big{)}$, consists of a Hausdorff topological space $M$ with an equivalence class of smooth atlases equivalent to $\mathcal{A}$.
\end{definition}

\begin{remark*}
    Past this initial definition, we omit specifying the atlas of a smooth Hilbert manifold unless necessary for clarity. 
\end{remark*}

For a point $p\in M$, the tangent space to $M$ at $p$ is defined via $M$-valued $C^1$ curves passing through $p$.
\begin{definition}
\label{def: tangent space}
    We say two $C^1$-curves  $\gamma,\eta$ satisfying $\gamma(0)=p=\eta(0)$ are equivalent if for some chart, $\phi$, of M around $p$,
    \[(\phi\circ\gamma)'(0)=(\phi\circ\eta)'(0).\]
    An equivalence class $[\gamma]$ is called a \textbf{tangent vector} to $M$ at $p$. We define the \textbf{tangent space} to $M$ at $p$, $T_pM$, as the set of all such equivalence classes. \par
    At any point $p\in M$, the tangent space $T_pM$ has a natural isomorphism to the modeling space $\mathcal{H}\cong T_pM$ by the map taking any $y\in V_\phi\subset\mathcal{H}$ to the tangent vector defined as the equivalence class of the path $t\mapsto\phi^{-1}(p_\phi+ty)$, where $p_\phi\coloneqq\phi^{-1}(p)$ and $\phi$ is a chart around $p$.
\end{definition}

Unlike in the finite-dimensional case, defining Riemannian metrics on infinite-dimensional manifolds require more careful consideration:
\begin{definition}
    Let $M$ be a Hilbert manifold, a \textbf{weak Riemannian metric} $g$ on $M$ is a smooth choice of inner products  
    \[p\mapsto g_p, \quad g_p:T_pM\times T_pM\rightarrow \mathbb{R}.\]
    If for every $p\in M$, the topology induced on $T_pM$ by a weak Riemannian metric $g$ coincides with the topology of the modelling space under the identification described in \Cref{def: tangent space}, then $g$ is said to be a \textbf{strong Riemannian metric}.  \par
    The manifold $M$ will be called a \textbf{weak Riemannian manifold} or \textbf{strong Riemannian manifold}, respectively.
\end{definition}

Riemannian metrics allow for a way of measuring the length of curves, which, in turn, defines the geodesic distance between any pair of points $x,y\in M$:
\begin{definition}
\label{def: riem dist}
   For a weak Riemannian manifold $\big{(}M,g\big{)}$, the \textbf{length} of a  $C^1$-curve $\gamma:[a,b]\rightarrow M$, is defined as 
    \begin{equation}
        \len(\gamma)\coloneqq\intop_a^b\sqrt{g_{\gamma(t)}\big{(}\gamma'(t),\gamma'(t)\big{)}}dt.
    \end{equation}
    For a pair of points $x,y\in M$, we denote by $\Gamma(x,y)$ the set of all continuous piecewise $C^1$ curves joining $x$ and $y$, i.e.,
    \begin{equation}
        \label{eq: Gamma def}
        \Gamma(x,y)\coloneqq\big\{\gamma\in C^0\big([0,1],M\big) \ \big| \  \gamma(0)=x, \ \gamma(1)=y, \ \gamma \text{ is piecewise } C^1\big\}.
    \end{equation}
    The \textbf{geodesic distance} between $x$ and $y$ is defined as 
    \begin{equation}
        d_G(x,y)\coloneqq\inf_{\gamma\in\Gamma(x,y)}\len(\gamma).
    \end{equation}
\end{definition}

On weak Riemannian manifolds, the geodesic distance is only guaranteed to be a pseudo-distance, which means it may fail to separate points. On strong Riemannian manifolds, the geodesic distance is guaranteed to be a metric on each connected component \cite[Theorem~4.11]{SchmedingAlexander2022AItI}.

% Subsection 2
\subsection{The Space of Immersed Open Curves}
\label{subsec: open curve immersions}
% Defining open interval sobolev space
In this section, we describe the construction of the space of Euclidean-valued immersed open curves $\Ind$, for $n\ge2$, and $d\ge1$. As mentioned in the introduction, the space is viewed as a Hilbert manifold, on which we define strong Riemannian metrics that induce a metric structure.
A key feature of the metrics we will define is their reparametrization invariance, which is important both for theoretical implications and practical uses in shape analysis. For example, as shown in \cite{Bauer_2014}, reparametrization invariant strong Riemannian metrics on $\Ind$ induce a non-degenerate metric on the corresponding shape space of unparametrized curves.\\

We begin by defining the
Hilbert space $ H^n\big((0,1),\mathbb{R}^d\big)$, on which $\Ind$ is modeled:
\begin{definition}
For a fixed Euclidean space $ \mathbb{R}^d $, the Sobolev space
\[\bigg(H^n\big((0,1),\mathbb{R}^d\big),\|\cdot\|_{H^n}\bigg),\quad \|f\|_{H^n}\coloneqq\bigg(\sum_{0\le i\le n}\|f^{(i)}\|^2_{L^2}\bigg)^\frac{1}{2}\]is a Banach space, which consists of all functions $ f:(0,1)\rightarrow \mathbb{R}^d $ that admit to weak derivatives of order $ 1\le k\le n$,  such that $ f $ as well as $ f^{(k)} $ for $ k=1,...,n$,  have finite $ L^2 $ norm.\par
It admits to a inner product given by 
\[ \langle f,g \rangle \coloneqq \sum_{k=1}^n \intop_0^1 \langle f^{(k)}(\theta),g^{(k)}(\theta)\rangle d\theta, \] 
turning it into a Hilbert space.
\end{definition}
This can be seen, for example, in  \cite[chapter~7]{leoni2009first}.
\begin{remark*}
For $f\in H^n\big((0,1),\mathbb{R}^d\big)$, we denote by $\|f\|_{\Dot{H}^n}$ the semi-norm: 
\[\|f\|_{\Dot{H}^n}\coloneqq\|f^{(n)}\|_{L^2}.\]
\end{remark*}
\begin{prop}\label{prop:abs cont} (\cite[chapter~7]{leoni2009first}) A function $ f \in H^n\big((0,1),\mathbb{R}^d\big) \ $ if and only if it has an absolutely continuous representative $ \overline{f} $ that is $ n-1 $ times continuously differential in the classical sense, with all $ n-1 $ derivatives being absolutely continuous, and that is almost-everywhere $ n $ times differentiable , such that 
\[\overline{f}^{(k)}\in L^2\big{(}(0,1),\mathbb{R}^d\big{)},\quad \forall k=0,...,n.\]
\end{prop}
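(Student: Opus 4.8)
The statement is a form of the Sobolev embedding $H^n\big((0,1),\mathbb{R}^d\big)\hookrightarrow C^{n-1}\big([0,1],\mathbb{R}^d\big)$ combined with the fundamental theorem of calculus, so the plan is to bootstrap from the case $n=1$ by induction on $n$.

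\textbf{Base case $n=1$.} The crux is the classical fact that a function $f\in L^1\big((0,1),\mathbb{R}^d\big)$ whose distributional derivative $g$ lies in $L^1$ agrees almost everywhere with an absolutely continuous function. I would prove this by fixing the base point $\theta_0=1/2$, setting $F(\theta)\coloneqq\int_{1/2}^{\theta}g(\tau)\,d\tau$, and checking that $F$ is absolutely continuous with distributional derivative $g$. Then $f-F$ has vanishing distributional derivative; mollifying and using that $(0,1)$ is connected shows $f-F$ is a.e.\ equal to a constant $c$, so $\overline{f}\coloneqq c+F$ is the sought representative, and $\overline{f}'=g\in L^2$ when $f\in H^1$. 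For the converse direction, if $f$ has an absolutely continuous representative with $L^2$ derivative, integration by parts against test functions --- valid for absolutely continuous functions by the Lebesgue fundamental theorem of calculus --- shows the classical derivative equals the weak derivative, hence $f\in H^1$.

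\textbf{Inductive step.} Assume the claim for $n-1$ and take $f\in H^n\big((0,1),\mathbb{R}^d\big)$ with $n\ge2$. Its weak derivative $f'$ lies in $H^{n-1}$, so by the inductive hypothesis $f'$ has a representative $g\in C^{n-2}$ with $g^{(n-2)}$ absolutely continuous, a.e.\ $(n-1)$-times differentiable, and $g^{(k)}\in L^2$ for $k=0,\dots,n-1$. Applying the base case to $f$ yields an absolutely continuous representative $\overline{f}$ with $\overline{f}'=g$ everywhere (legitimate since $g$ is continuous); hence $\overline{f}\in C^{n-1}$, and $\overline{f}^{(k)}=g^{(k-1)}$ for $k\ge1$. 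In particular $\overline{f}^{(n-1)}=g^{(n-2)}$ is absolutely continuous, $\overline{f}$ is a.e.\ $n$-times differentiable with $\overline{f}^{(n)}$ equal a.e.\ to the weak derivative $f^{(n)}\in L^2$, and $\overline{f}=f\in L^2$. The converse follows by iterating the integration-by-parts argument from the base case $n$ times.

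\textbf{Main obstacle.} The only genuinely nontrivial ingredients are the two analytic lemmas used in the base case: that a distribution on an interval with vanishing derivative is a constant, and the Lebesgue fundamental theorem of calculus characterizing absolute continuity (absolutely continuous $\iff$ a.e.\ differentiable with $L^1$ derivative equal to the integral of that derivative). Both are standard and available in \cite[Chapter~7]{leoni2009first}, so I would simply cite them; the rest is bookkeeping, the one subtlety being to match ``weak derivative'' with ``a.e.\ classical derivative'' consistently at every order throughout the induction.
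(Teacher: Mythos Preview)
Your proof sketch is correct and follows the standard inductive route. However, the paper does not actually prove this proposition: it is stated with the citation \cite[chapter~7]{leoni2009first} and no argument is given --- it is treated as a known background fact imported from Leoni's book. So there is no ``paper's own proof'' to compare against; you have supplied what the paper merely cites.
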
 

Each absolutely continuous curve $c:(0,1)\rightarrow\mathbb{R}^d$ can be uniquely extended to an absolutely continuous map $[0,1]\rightarrow\mathbb{R}^d$. By choosing absolutely continuous representatives, we may assume curves are defined on the closed interval $[0,1]$ rather than $(0,1)$.  Having defined the Hilbert space $ H^n\big((0,1),\mathbb{R}^d\big)$, we make the following observation:

\begin{comment}
Any such representative as in \Cref{prop:abs cont} has a unique extension to the closed interval $[0,1]$. Thus we can define:
\begin{definition}
    The space $ H^n\big((0,1),\mathbb{R}^d\big)$ is the space of all continuous functions $f:[0,1]\rightarrow \mathbb{R}^d$
    such that $f|_{(0,1)}\in H^n\big((0,1),\mathbb{R}^d\big)$. \par
    The inner product $\langle f,g\rangle_{H^n([0,1],\mathbb{R}^d)}$ is defined by 
    \[\langle f,g\rangle_{H^n([0,1],\mathbb{R}^d)}\coloneqq \langle f|_{(0,1)},g|_{(0.1)}\rangle_{H^n((0,1),\mathbb{R}^d)}.\]
\end{definition}
\end{comment}

\begin{prop}
\label{prop:openess}
For $n\ge 2$, the set 
\[ \Ind \coloneqq\{ c\in  H^n\big((0,1),\mathbb{R}^d\big) :c'(\theta)\ne0 \quad \forall \theta \in [0,1] \} \]
is open in  $  H^n\big((0,1),\mathbb{R}^d\big) $.
\end{prop}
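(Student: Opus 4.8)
The plan is to exhibit, around each $c\in\Ind$, an open $H^n$-ball that is entirely contained in $\Ind$; the engine is the continuous Sobolev embedding $H^n\big((0,1),\mathbb{R}^d\big)\hookrightarrow C^1\big([0,1],\mathbb{R}^d\big)$, which is available precisely because $n\ge2$.

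First I would record the embedding estimate in the only form we need it. By \Cref{prop:abs cont}, every $f\in H^n\big((0,1),\mathbb{R}^d\big)$ with $n\ge1$ has an absolutely continuous representative on $[0,1]$, so $f(x)-f(y)=\intop_y^x f'(t)\,dt$ for all $x,y\in[0,1]$. Averaging over $y\in[0,1]$ and using that $[0,1]$ has unit Lebesgue measure (hence $\|\cdot\|_{L^1}\le\|\cdot\|_{L^2}$) gives
\[
\|f\|_{C^0([0,1])}\ \le\ \|f\|_{L^1}+\|f'\|_{L^1}\ \le\ \|f\|_{L^2}+\|f'\|_{L^2}\ \le\ \sqrt2\,\|f\|_{H^1}.
\]
Applying this to $f=c'$, which lies in $H^{n-1}\subset H^1$ because $n\ge2$, produces a constant $C$ (one may take $C=\sqrt2$) with
\[
\|c'\|_{C^0([0,1])}\ \le\ C\,\|c\|_{H^n}\qquad\text{for all }c\in H^n\big((0,1),\mathbb{R}^d\big).
\]

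Next, fix $c\in\Ind$. Its derivative $c'$ is continuous on the compact interval $[0,1]$ and nowhere vanishing, so $m\coloneqq\min_{\theta\in[0,1]}|c'(\theta)|>0$. For any $\tilde c\in H^n\big((0,1),\mathbb{R}^d\big)$ with $\|\tilde c-c\|_{H^n}<m/C$ and every $\theta\in[0,1]$,
\[
|\tilde c'(\theta)|\ \ge\ |c'(\theta)|-|\tilde c'(\theta)-c'(\theta)|\ \ge\ m-\|\tilde c'-c'\|_{C^0([0,1])}\ \ge\ m-C\,\|\tilde c-c\|_{H^n}\ >\ 0,
\]
so $\tilde c\in\Ind$. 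Thus the $H^n$-ball of radius $m/C$ about $c$ lies in $\Ind$, and since $c$ was arbitrary, $\Ind$ is open.

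I do not expect a real obstacle here: the only substantive ingredient is the estimate $\|c'\|_{C^0}\le C\|c\|_{H^n}$, and everything afterward is a triangle-inequality compactness argument. One could equally invoke Morrey's inequality or the continuity of $H^n\hookrightarrow C^{n-1}$ from \cite{leoni2009first}, but the one-line fundamental-theorem-of-calculus bound above keeps the proof self-contained. The one bookkeeping point to be careful about is that the estimate, and hence the conclusion ``$c'(\theta)\ne0$ for all $\theta\in[0,1]$'' in the definition of $\Ind$, refers to the continuous representative of $c'$ — which is consistent with the convention, adopted just after \Cref{prop:abs cont}, of always working with absolutely continuous representatives on the closed interval.
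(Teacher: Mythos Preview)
Your proof is correct and follows essentially the same approach as the paper's: both arguments use the Sobolev embedding $H^1\hookrightarrow C^0$ applied to $c'-\tilde c'$ (available since $n\ge2$), combined with the compactness observation $\min_\theta|c'(\theta)|>0$, to show an $H^n$-ball around $c$ stays in $\Ind$. The only cosmetic difference is that the paper quotes the embedding inequality from \cite[Theorem~7.34]{leoni2009first}, whereas you derive it by hand via the fundamental theorem of calculus and averaging; the remainder of both proofs is identical.
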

\begin{proof}
Assume $ c \in  H^n\big((0,1),\mathbb{R}^d\big) $ such that $c'(\theta)\ne 0$ for all $\theta \in [0,1]$. In particular, by continuity of $c'$ over a compact domain, $ |c'(\theta)|
\ge \delta > 0$ for some $\delta>0$. Let also $d\in  H^n\big((0,1),\mathbb{R}^d\big) $, and notice that $c',d'\in H^1\big((0,1),\mathbb{R}^d\big)$.
Applying the standard Sobolev inequality  \cite[Theorem~7.34]{leoni2009first}
\[ \|u\|_{L^r(I)} \le \ell ^{1/r-1/q}\|u\|_{L^q(I)} + \ell ^{1-1/p-1/r} \|u'\|_{L^p(I)} \] where $\ell$ marks the length of the interval $I$,
to $u=c'-d'$ with $r=\infty$ and $p=q=2$ shows that 
\[ \|c'-d'\|_{L^\infty} \le C\|c'-d'\|_{H^1} \]
with $C$ being an universal constant independent of $c,d$. 
Consider the ball $B$ in $H^n\big((0,1),\mathbb{R}^d\big)$ of radius $\frac{\delta}{2C}$ around $c$. For any $d\in B$ we have
\[\|c'-d'\|_{H^1}\le\|c-d\|_{H^2}<\frac{\delta}{2C}\Rightarrow\|c'-d'\|_{L^\infty}<\frac{\delta}{2}.\] 
Since $|c'(\theta)|>\delta$ for all $\theta\in[0,1]$ and $d'$ is continuous, 
\[|d'(\theta)|>\frac{\delta}{2} \quad \forall\theta\in[0,1],\]
ensuring that $d$ is an immersion.
\end{proof}
As an open subset of the Hilbert space $H^n\big((0,1),\mathbb{R}^d\big)$, $\Ind$ has a natural manifold structure:
\begin{prop}
    $ \Ind $  with the atlas consisting of the global inclusion chart,
    \[\iota:\Ind\rightarrow H^n\big((0,1),\mathbb{R}^d\big), \]
    forms a $C^\infty$-Hilbert manifold. The 
    tangent space at a curve $c\in\Ind$ is the space of $H^n$-vector fields along the curve $c$:
    \[  T_c\Ind \cong H^n\big((0,1),\mathbb{R}^d\big),  \]
    with the identification given by the map $h\leftrightarrow[t\mapsto c+th]\in T_c\Ind$.
\end{prop}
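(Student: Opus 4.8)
The plan is to verify directly that the singleton atlas $\{\iota\}$ meets the axioms of a $C^\infty$-Hilbert manifold, and then to read off the tangent space from the canonical identification in \Cref{def: tangent space}.

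First I would dispatch the atlas axioms. Write $\mathcal{H}\coloneqq H^n\big((0,1),\mathbb{R}^d\big)$. By \Cref{prop:openess} the set $\Ind$ is open in $\mathcal{H}$, so $\iota\colon\Ind\to\mathcal{H}$ is a homeomorphism onto the open subset $V_\iota\coloneqq\iota(\Ind)\subset\mathcal{H}$; the covering condition holds trivially since $U_\iota=\Ind$; and the unique transition map $\iota\circ\iota^{-1}=\mathrm{id}_{V_\iota}$ is visibly $C^\infty$. The space $\Ind$ inherits the subspace topology from the metrizable (hence Hausdorff) space $\mathcal{H}$, so it is itself Hausdorff. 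Thus $\big(\Ind,\{\iota\}\big)$ is a $C^\infty$-Hilbert manifold modeled on $\mathcal{H}$, which is the familiar fact that an open subset of a Banach space is again a manifold with the restricted global chart.

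Next I would compute the tangent space by specializing \Cref{def: tangent space} to the chart $\phi=\iota$. For this chart $p_\phi=\iota^{-1}(c)=c$, so the canonical isomorphism $\mathcal{H}\cong T_c\Ind$ sends $h\in\mathcal{H}$ to the equivalence class of the curve $t\mapsto\iota^{-1}(c+th)=c+th$. Openness of $\Ind$ (again \Cref{prop:openess}) guarantees that $c+th\in\Ind$ for $|t|$ sufficiently small, so this is a genuine $C^1$ --- indeed affine, hence smooth --- curve through $c$; and since two such curves are equivalent in the sense of \Cref{def: tangent space} precisely when their $\iota$-derivatives at $0$ coincide, while every $h\in\mathcal{H}$ arises as $(\iota\circ\gamma)'(0)$ for $\gamma(t)=c+th$, the correspondence $h\leftrightarrow[t\mapsto c+th]$ is exactly the asserted bijection $T_c\Ind\cong H^n\big((0,1),\mathbb{R}^d\big)$.

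I do not expect a genuine obstacle here: the proposition is essentially the general statement that an open subset of a Hilbert space is a Hilbert manifold with globally trivial chart and constant tangent bundle, and its one real input --- that $\Ind$ is open --- has already been supplied by \Cref{prop:openess}. The only care needed is bookkeeping, namely matching the abstract data $U_\varphi,V_\varphi,p_\phi$ of \Cref{def: tangent space} to the concrete choices $\Ind,V_\iota,c$ in the present situation.
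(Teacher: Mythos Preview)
Your proposal is correct. The paper itself gives no proof of this proposition at all: it is stated immediately after \Cref{prop:openess} and left without argument, the implicit justification being exactly what you spell out, namely that an open subset of a Hilbert space is trivially a Hilbert manifold under the inclusion chart, with tangent space canonically identified with the ambient space via \Cref{def: tangent space}. Your write-up simply makes this explicit, so there is nothing to compare beyond noting that you have supplied details the paper deemed unnecessary.
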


A seemingly natural choice of strong Riemannian metrics on $\Ind$ can be defined  by taking the pointwise product
\[g_c(h,k)\equiv\langle h,k\rangle_{H^n} \]
for any pair of tangent curves $h,k\in T_c\Ind$. 
However, this choice of Riemannian metrics has a major drawback: it is not reparametrization invariant. That is to say, for some pairs of curves $c_1,c_2\in\Ind$, it is possible to choose a $H^n$-diffeomorphism of the interval $[0,1]$, $\varphi$, such that
\[\text{dist}_g(c_1,c_2)\ne \text{dist}_g(c_1\circ\varphi,c_2\circ\varphi)\]
where the distance $\text{dist}_g$ is the geodesic distance induced by $g$.
The need for reparametrization invariant metrics comes from a primary area of application of immersed curve spaces, Shape Analysis. Reparametrization invariant Riemannian metrics on $\Ind$ induce a metric on the quotient space 
\[\Ind\slash\diff,\]
the space of unparametrized curves, which is referred to as the shape space. In this case, the quotient map is a submersion of manifolds and allows lifting properties that simplify the study of geodesics in the shape space. Shape spaces and their properties under various Riemannian metrics are discussed more thoroughly in \cite[section~3]{michor2006riemanniangeometriesspacesplane}.\par
Due to this, we define another strong Riemannian metric on $\Ind$:
\begin{definition}
At any curve $c\in\Ind$ for $h,k\in T_c\Ind$  \par
define
    \begin{equation} 
    \label{eq: type of metric}
        G_c(h,k)\coloneqq\sum_{i=0}^n a_i \intop_0^1 \langle\nabla^i_{\partial_s}h, \nabla^i_{\partial_s}k\rangle ds,
    \end{equation}
where $a_i$ are non-negative constants for $ i=0,...,n$ with $a_0,a_n>0 $, the inner product $\langle\cdot,\cdot\rangle$ is the standard Euclidean inner product, 
\begin{equation}
\label{def:der_int}
\nabla_{\partial_s}h\coloneqq \frac{1}{|c'|}(\partial_\theta h), \quad ds\coloneqq|c'|d\theta 
\end{equation}
are differentiation and integration with respect to arc length, and $|\cdot|$ is used to indicate the standard Euclidean 2-norm.
\end{definition}

Notice that for $c\in \Ind$, since $c'$ is absolutely continuous, exists some $\delta>0$  such that 
\[|c'(\theta)|\ge\delta>0 \quad \forall \theta\in[0,1],\]
hence $\nabla_{\partial_s}$ is well defined, and for any $h\in H^1\big((0,1),\mathbb{R}^d\big)$, $\nabla_{\partial_s}h$ remains $L^2$ integrable, thus $G_c$ is well defined. \par
Notice also that in \eqref{def:der_int}, both  $\partial_\theta h$ and $c'$ indicate standard derivatives with respect to $\theta$. We will continue with this use of notation throughout to distinguish the derivative of the base curve $c$ from the differentiation of tangent vectors $h\in  H^n\big((0,1),\mathbb{R}^d\big)$ w.r.t.\ to $\theta$.\par

%% define as arc length vector field??
%
% Reparametrization invariance
Next, we will demonstrate that the Riemannian metric $G$ is indeed reparametrization invariant under $H^n$-reparametrizations of $[0,1]$:
\begin{definition}
\label{def: diffeomorphism}
For $\ n\ge 2$, the set of $\ \mathbf{H^n}$\textbf{-diffeomorphisms} mapping $[0,1]$ onto itself will be denoted as
\begin{equation}
    \diff\coloneqq\bigg\{\varphi\in\mathcal{D}^1\big{(}[0,1]\big{)}\bigg|\varphi\in H^n([0,1]) \bigg\},
\end{equation}
where \[\mathcal{D}^1\big{(}[0,1]\big{)}\coloneqq\bigg\{d:[0,1]\rightarrow[0,1]\bigg| d\text{ is a $C^1$-homeomorphism of $[0,1]$ onto itself}\bigg\}.\]
 \par
We say that $\varphi\in\diff$ is \textbf{endpoint-fixing} if $\varphi(0)=0$ and $\varphi(1)=1$.  We denote,
\[\diffp\coloneqq\bigg\{\varphi\in\diff\bigg| \ \varphi \text{ is endpoint-fixing}\bigg\}.\] Similarly, we say  that $\varphi$ is \textbf{endpoint-switching} if $\varphi(0)=1$ and $\varphi(1)=0$, and we define $\diffm$ accordingly.
\end{definition}
\begin{prop}
\label{prop:diff is group}
$ \diff $ is a group when considered with composition $\varphi\psi=\varphi\circ\psi$ as the group action. Moreover, $  \diff$ acts from the right on $\Ind$ by reparametrization, i.e., by right composition $c.\varphi=c\circ\varphi$ for $\varphi\in\diff$ and $c\in\Ind$.
\end{prop}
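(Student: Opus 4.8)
The plan is to verify the two assertions of \Cref{prop:diff is group} in turn: first that $\diff$ is a group under composition, and second that right composition defines a group action of $\diff$ on $\Ind$. The only genuinely nontrivial point in both parts is closure --- that the composition and the inverse of $H^n$-diffeomorphisms are again $H^n$-diffeomorphisms, and that $c\circ\varphi$ remains an $H^n$-immersion. Everything else (associativity of composition, the identity being $\mathrm{id}_{[0,1]}$, the action axioms $c.\mathrm{id}=c$ and $(c.\varphi).\psi = c.(\varphi\psi)$) is purely formal and I would dispatch it in one or two sentences.

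For the group structure, I would argue as follows. If $\varphi,\psi\in\diff$, then $\varphi\circ\psi$ is clearly a $C^1$-homeomorphism of $[0,1]$ (composition of such), so it lies in $\mathcal{D}^1([0,1])$; the content is that $\varphi\circ\psi\in H^n([0,1])$. This follows from a chain-rule / Fa\`a di Bruno argument together with the fact that $H^n([0,1])\hookrightarrow C^{n-1}([0,1])$ for $n\ge 1$ (a consequence of \Cref{prop:abs cont}, or of the Sobolev embedding used in the proof of \Cref{prop:openess}): the derivatives of $\varphi\circ\psi$ up to order $n$ are polynomial expressions in $\varphi^{(j)}\circ\psi$ and $\psi^{(k)}$ for $j,k\le n$, where each $\varphi^{(j)}\circ\psi$ with $j\le n-1$ is continuous, $\varphi^{(n)}\circ\psi\in L^2$ since $\psi'$ is bounded below (being a $C^1$-diffeomorphism of a compact interval, $|\psi'|\ge\delta>0$, so a change of variables keeps $\varphi^{(n)}\circ\psi$ in $L^2$), and the remaining $\psi^{(k)}$ factors are bounded for $k\le n-1$ with $\psi^{(n)}\in L^2$. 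Hence all derivatives of order $\le n$ are in $L^2$. The inverse $\varphi^{-1}$ is again a $C^1$-homeomorphism; to see $\varphi^{-1}\in H^n$ one differentiates the identity $\varphi\circ\varphi^{-1}=\mathrm{id}$ repeatedly, solving for $(\varphi^{-1})^{(n)}$ in terms of lower-order derivatives of $\varphi^{-1}$ and derivatives of $\varphi$ evaluated at $\varphi^{-1}$, using crucially that $\varphi'$ is bounded away from zero so that the denominators that appear (powers of $\varphi'\circ\varphi^{-1}$) are bounded below. The same $L^2$/boundedness bookkeeping as above then gives $\varphi^{-1}\in H^n([0,1])$.

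For the action on $\Ind$: given $c\in\Ind$ and $\varphi\in\diff$, the same Fa\`a di Bruno argument shows $c\circ\varphi\in H^n((0,1),\mathbb{R}^d)$ --- this is the vector-valued analogue of the closure computation above, again using $c\in C^{n-1}$, $c^{(n)}\in L^2$, the substitution bound coming from $|\varphi'|\ge\delta>0$, and boundedness of $\varphi^{(k)}$ for $k\le n-1$. Moreover $(c\circ\varphi)'=(c'\circ\varphi)\,\varphi'$, and since $c'\circ\varphi$ never vanishes and $\varphi'>0$ (or $<0$ in the endpoint-switching case) never vanishes, $c\circ\varphi$ is an immersion, hence lies in $\Ind$. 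That $\diff$ acts \emph{from the right} --- i.e.\ $(c.\varphi).\psi = c\circ\varphi\circ\psi = c\circ(\varphi\psi) = c.(\varphi\psi)$ and $c.\mathrm{id}=c$ --- is immediate from associativity of composition of functions.

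I expect the main obstacle to be the closure of $\diff$ under inversion: unlike composition, where the chain rule directly expresses the derivatives of $\varphi\circ\psi$ in terms of the given data, for the inverse one must extract $(\varphi^{-1})^{(n)}$ from an implicit relation, and one has to be careful that (a) the algebraic manipulation is legitimate at the level of weak derivatives (justified by the $C^{n-1}$ regularity plus the structure of \Cref{prop:abs cont}), and (b) the only possible source of blow-up, namely division by $\varphi'\circ\varphi^{-1}$, is controlled by the uniform lower bound $|\varphi'|\ge\delta>0$ available on the compact interval $[0,1]$. Once this lower bound is in hand, both the composition and inversion closure arguments reduce to routine --- if slightly tedious --- estimates, and I would state them without grinding through the full Fa\`a di Bruno expansion.
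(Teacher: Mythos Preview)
Your proposal is correct and follows essentially the same route as the paper's proof: both reduce the group closure to chain-rule/Fa\`a~di~Bruno bookkeeping together with the Sobolev embedding $H^n\hookrightarrow C^{n-1}$ and the lower bound $|\varphi'|\ge\delta>0$ for the inverse. The paper outsources the composition regularity to a lemma of Ebin--Marsden and handles the inverse by doing the $n=2$ case explicitly and then inducting, whereas you sketch both directly; you also spell out the verification that $c\circ\varphi\in\Ind$ for the action, which the paper leaves implicit.
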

\begin{proof}
    It is straightforward to verify that the composition of two diffeomorphisms and the inverse of a diffeomorphism are also diffeomorphisms. Hence, it remains to prove that they belong to $H^n\big((0,1),\mathbb{R}\big)$ for $n\ge2$. \par 
    The regularity requirement for composition is a direct consequence of \cite[Lemma~3.1]{ebin1970manifold}, which implies that if $f\in H^k\big((0,1),(0,1)\big)$ for $k\ge2$, satisfies $|f_\theta|>0$ and  $g\in H^k\big((0,1),\mathbb{R}\big)$, then $g\circ f\in H^k\big((0,1),\mathbb{R}\big)$. Regarding the regularity of the inverse, for any $\varphi\in\diff$ where $n\ge2$, by the inverse function theorem, we conclude that $\varphi^{-1}$ is $n-1$ times continuously differentiable. Thus, it remains to show that $\varphi^{-1}$ admits an $n$-th derivative and has finite $\Dot{H}^n$-norm. For $n=2$, by the chain rule,
    \[\partial_\theta\big(\varphi^{-1}\big) = \frac{1}{\varphi_\theta\circ\varphi^{-1}}
    \] 
    which is bounded with respect to the supremum norm on $[0,1]$. Differentiating again results in
    \[\partial_\theta^2(\varphi^{-1})=\frac{-\varphi_{\theta\theta}\circ\varphi^{-1}\cdot(\varphi^{-1})_\theta}{(\varphi_\theta\circ\varphi^{-1})^2}=\frac{-1}{(\varphi_\theta\circ\varphi^{-1})^3}\cdot\varphi_{\theta\theta}\circ\varphi^{-1}.\]
    Since $\frac{1}{(\varphi_\theta\circ\varphi^{-1})^3}$ is bounded on $[0,1]$ and $\varphi_{\theta\theta}$ is $L^2$ integrable, we conclude that 
    \begin{equation}
    \label{eq:Holder ieq.}
    \|\partial_\theta^2(\varphi^{-1})\|_{L^2}\le\|\frac{-1}{(\varphi_\theta\circ\varphi^{-1})^3}\|_{L^\infty}\cdot\|\varphi_{\theta\theta}\circ\varphi^{-1}\|_{L^2}<\infty,
    \end{equation}
    where we used the fact that $\varphi^{-1}\in \mathcal{D}^1\big([0,1]\big)$ to conclude  $\|\varphi_{\theta\theta}\circ\varphi^{-1}\|_{L^2}$ is finite by change of variable. \par 
   For $n>2$, we can continue with an inductive argument.
   \cite[Lemma~3.2] {ebin1970manifold} implies for $k\ge1$ and  $f,g\in H^k\big((0,1),\mathbb{R}\big)$, that $f\cdot g\in H^k\big((0,1),\mathbb{R}\big)$. Since $\partial^2_\theta(\varphi^{-1})=\frac{1}{(\varphi_\theta\circ\varphi^{-1})^3}\cdot\varphi_{\theta \theta}\circ\varphi^{-1}$, and $\partial_\theta\varphi^{-1}=\frac{1}{\varphi_\theta\circ\varphi^{-1}}\in H^{n-2}\big((0,1)\big)$ by assumption, it is enough to show that $\varphi_{\theta\theta}\circ\varphi^{-1}\in H^{n-2}\big((0,1),\mathbb{R}\big)$ as well. For $n>3$ this follows again by \cite[Lemma~3.1]{ebin1970manifold} since $n-2\ge2$, while for while for $n=3$ we have $\partial_\theta(\varphi_{\theta\theta}\circ\varphi^{-1})=\varphi^{(3)}\circ\varphi^{-1}\cdot\frac{1}{\varphi_\theta\circ\varphi^{-1}}$ and the claim follows similarly to \eqref{eq:Holder ieq.}. 
\end{proof}

It is not difficult to verify that the Riemannian metric $G$ is invariant under the action of $\diff$:
\begin{prop}
\label{para.inv}
    Let $\varphi\in\diff$, then at any $c\in \Ind$
    \[ G_{c\circ \varphi}(h\circ\varphi,k\circ\varphi)=G_c(h,k) \quad \forall h,k\in   H^n\big((0,1),\mathbb{R}^d\big) \]
\end{prop}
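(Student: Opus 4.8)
The plan is to reduce the invariance of $G$ to the invariance of each summand $\intop_0^1 \langle \nabla_{\partial_s}^i h, \nabla_{\partial_s}^i k\rangle\, ds$ separately, and to prove this by a change-of-variables argument after establishing the key transformation rule for the arc-length derivative under reparametrization. Concretely, write $\tilde c = c\circ\varphi$, $\tilde h = h\circ\varphi$, $\tilde k = k\circ\varphi$. The first step is to observe how $\nabla_{\partial_s}$ behaves: since $\tilde c' = (c'\circ\varphi)\,\varphi_\theta$ and $\varphi_\theta > 0$ on $[0,1]$ (as $\varphi$ is an orientation-preserving $C^1$-diffeomorphism; the endpoint-switching case is handled symmetrically with $\varphi_\theta<0$ and $|\varphi_\theta|$ absorbing the sign), we have $|\tilde c'| = (|c'|\circ\varphi)\,|\varphi_\theta|$, hence
\[
\nabla_{\partial_s}^{\tilde c}\tilde h = \frac{1}{|\tilde c'|}\partial_\theta(h\circ\varphi) = \frac{1}{(|c'|\circ\varphi)\,|\varphi_\theta|}\,(h'\circ\varphi)\,\varphi_\theta = \left(\frac{h'}{|c'|}\circ\varphi\right) = \big(\nabla_{\partial_s}^{c}h\big)\circ\varphi .
\]
That is, the arc-length derivative is \emph{natural} with respect to reparametrization.

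The second step is to iterate this identity. Since $\nabla_{\partial_s}^c h$ is again an $H^{n-1}$ vector field along $c$ (or at least regular enough to apply the same computation), an easy induction on $i$ gives $\big(\nabla_{\partial_s}^{\tilde c}\big)^i \tilde h = \big((\nabla_{\partial_s}^c)^i h\big)\circ\varphi$ for all $0\le i\le n$. One should remark here that each $(\nabla_{\partial_s}^c)^i h$ for $i\le n$ lies in $L^2$ — this is where $a_n>0$ and the $H^n$ regularity are used — so all the integrals below are finite; this is already guaranteed by the well-definedness discussion of $G_c$ in the text.

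The third step is the change of variables in the integral. For each $i$,
\[
\intop_0^1 \big\langle (\nabla_{\partial_s}^{\tilde c})^i\tilde h,\ (\nabla_{\partial_s}^{\tilde c})^i\tilde k\big\rangle\, |\tilde c'|\, d\theta
= \intop_0^1 \big\langle (\nabla_{\partial_s}^{c})^i h,\ (\nabla_{\partial_s}^{c})^i k\big\rangle\circ\varphi\ \cdot (|c'|\circ\varphi)\,|\varphi_\theta|\, d\theta,
\]
and substituting $\sigma = \varphi(\theta)$, $d\sigma = |\varphi_\theta|\,d\theta$ (the orientation flip in the endpoint-switching case reverses the limits and cancels against the sign, so the expression is unchanged) turns the right-hand side into $\intop_0^1 \langle (\nabla_{\partial_s}^c)^i h, (\nabla_{\partial_s}^c)^i k\rangle\,|c'|\,d\sigma$, i.e.\ the $i$-th term of $G_c(h,k)$. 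Summing over $i$ with the weights $a_i$ yields $G_{c\circ\varphi}(h\circ\varphi,k\circ\varphi)=G_c(h,k)$.

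The only mildly delicate point — the main obstacle, such as it is — is justifying the change of variables at the level of $L^2$ functions rather than continuous ones: $\varphi$ is only an $H^n$-diffeomorphism, so one should invoke the standard fact that composition with a $C^1$ (indeed $H^n\hookrightarrow C^1$ for $n\ge2$) diffeomorphism with non-vanishing derivative preserves $L^2$ and satisfies the usual substitution formula (this is exactly the content used in \cite[Lemma~3.1]{ebin1970manifold} and in \Cref{prop:diff is group}), together with taking absolutely continuous representatives so that $\partial_\theta(h\circ\varphi)=(h'\circ\varphi)\varphi_\theta$ holds a.e.\ by the chain rule. Everything else is a direct computation.
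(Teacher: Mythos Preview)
Your proof is correct and follows essentially the same approach as the paper: both reduce to each summand, establish the naturality rule $\nabla_{\partial_s}^{\tilde c}(h\circ\varphi)=\pm(\nabla_{\partial_s}^c h)\circ\varphi$ via the chain rule (with the sign depending on whether $\varphi$ is endpoint-fixing or endpoint-switching), iterate to order $i$, and finish by the change-of-variables formula. Your added remarks on $L^2$-regularity and the justification of the substitution for $H^n$-diffeomorphisms are a bit more explicit than the paper's version, but the argument is the same.
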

\begin{proof}
It is sufficient to show that for any $c\in \Ind $ and any $h,k\in  H^n\big((0,1),\mathbb{R}^d\big) $, the following equality holds for $i=0,1....,n$
\[\intop_0^1\langle \nabla_{\partial_{s(c)}}^i h,\nabla_{\partial_{s(c)}}^i k\rangle |c'|d\theta = \intop^1_0 \langle \nabla_{\partial_{s(c\circ\varphi)}}^i h\circ \varphi,\nabla_{\partial_{s(c\circ\varphi)}}^i k\circ \varphi \rangle|(c\circ\varphi)'|d\theta,\]
where $\nabla_{\partial_{s(c)}},\nabla_{\partial_{s(c\circ\varphi)}}$ are used to indicate the differentiation w.r.t.\ base curves $c,c\circ\varphi$ respectively. \par
Notice that the equality for $i=0$ holds by the change of variable formula. 
For higher derivatives, by the chain rule, for every $\theta\in[0,1]$ \[\nabla_{\partial_{s(c\circ\varphi)}}(h\circ\varphi)(\theta)=\frac{\partial_\theta (h\circ\varphi)}{|(c\circ\varphi)'|}(\theta)=\pm \frac{\partial_\theta h}{|c'|}\big(\varphi(\theta)\big)=\pm\big(\nabla_{\partial_{s(c)}}h\big)\big(\varphi(\theta)\big)\]
where the sign is determined by whether $\varphi$ is endpoint-fixing or endpoint-switching.
Applying this equality iteratively shows \[\nabla^i_{\partial_{s(c\circ\varphi)}}(h\circ\varphi)(\theta)=(\pm1)^i\nabla^i_{\partial_{s(c)}}h\big(\varphi(\theta)\big).\] Considering this, we have
\begin{equation*}
\begin{split}
&\intop^1_0\bigg{\langle} \nabla_{\partial_{s(c\circ\varphi)}}^i(h\circ\varphi),  \nabla_{\partial_{s(c\circ\varphi)}}^i (k\circ\varphi)\bigg{\rangle} |(c\circ\varphi)'|d\theta=\\
&\intop^1_0\bigg{\langle} (\pm1)^i\nabla^i_{\partial_{s(c)}}h(\varphi(\theta)),(\pm1)^i \nabla^i_{\partial_{s(c)}}k(\varphi(\theta))\bigg{\rangle} |(c\circ\varphi)'|d\theta=\\
&\intop^1_0\bigg{\langle}(\nabla^i_{\partial_{s(c)}}h)\circ\varphi(\theta), (\nabla^i_{\partial_{s(c)}}k)\circ\varphi(\theta)\bigg{\rangle} |c'\circ\varphi(\theta)||\varphi'(\theta)|d\theta=\\
&\intop^1_0\langle\nabla^i_{\partial_{s(c)}}h, \nabla^i_{\partial_{s(c)}}k\rangle|c'|d\theta,
\end{split}
\end{equation*}
where for the last equality we used the change of variable formula for $\varphi\in\mathcal{D}^1$.
\end{proof}

%% Riemannian metrics
As in \ref{def: riem dist}, the Riemannian metric $G$ naturally induces a psuedo-metric on $\Ind $:
\begin{definition} 
The geodesic distance, \[ d_G:\Ind\times\Ind\rightarrow\mathbb{R}_{\ge0},\] is given by
\begin{equation}
\label{eq: geo dist}
    d_G(c_0,c_1)\coloneqq\inf_{\gamma\in\Gamma(c_0,c_1)} \len(\gamma)\coloneqq\inf_{\gamma\in\Gamma(c_0,c_1)}\intop_0^1 \sqrt{G_{\gamma(t)}\big(\partial_t\gamma(t),\partial_t\gamma(t)\big)}dt,
\end{equation}
where $\Gamma(c_0,c_1)$ is the set of piecewise-$C^1$ paths connecting $c_0$ and $c_1$, as defined in \eqref{eq: Gamma def}. 
\end{definition} 

\begin{prop} The Riemannian metric $G$ is a strong Riemannian metric, turning $\Ind$ into a strong Hilbert manifold. In particular, the geodesic distance \eqref{eq: geo dist} is point separating.
\end{prop}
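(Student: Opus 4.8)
The plan is to prove that at every curve $c\in\Ind$ the bilinear form $G_c$ defines a norm on $T_c\Ind\cong H^n\big((0,1),\mathbb{R}^d\big)$ that is \emph{equivalent} to $\|\cdot\|_{H^n}$. Equivalence of norms means $G_c$ induces exactly the modelling‑space topology on every tangent space, which is the definition of a strong Riemannian metric; the point‑separation statement then follows at once from \cite[Theorem~4.11]{SchmedingAlexander2022AItI}. (The smoothness of $c\mapsto G_c$ --- the remaining ingredient of being a Riemannian metric --- is routine and reduces to smoothness of $c\mapsto 1/|c'|$ as a map into $H^{n-1}$, which follows from the composition and algebra lemmas \cite[Lemmas~3.1,~3.2]{ebin1970manifold} already invoked above; I will not dwell on it.)

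Fix $c\in\Ind$ and set $\delta\coloneqq\min_{[0,1]}|c'|>0$, $M\coloneqq\max_{[0,1]}|c'|$ and $\mu\coloneqq 1/|c'|$. Since $c'\in H^{n-1}$ with $\delta\le|c'|\le M$ and $n-1\ge1$, one gets $\mu\in H^{n-1}\big((0,1)\big)\subset C^{n-2}\big([0,1]\big)$ with $1/M\le\mu\le 1/\delta$. The first step is the algebraic identity
\[\nabla_{\partial_s}^{\,i}h=\Big(\tfrac{1}{|c'|}\partial_\theta\Big)^{i}h=\sum_{j=1}^{i}P_{i,j}\,\partial_\theta^{\,j}h,\qquad P_{i,i}=\mu^{i},\]
valid for $1\le i\le n$, where each $P_{i,j}$ is a fixed polynomial in $\mu$ and its derivatives of order at most $i-j$; note the sum starts at $j=1$, since once $h$ has been differentiated once it stays differentiated. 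A short counting argument, using $\partial_\theta^{\,k}\mu\in H^{n-1-k}$ and the fact that for $n\ge2$ no monomial of $P_{i,j}$ can contain two factors each carrying $n-1$ derivatives, shows $P_{i,j}\in L^2\big((0,1)\big)$ for all $1\le j\le i\le n$, while $P_{i,i}=\mu^i\in L^\infty$.

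For the upper bound, $G_c(h,h)\le M\sum_{i=0}^{n}a_i\|\nabla_{\partial_s}^{\,i}h\|_{L^2}^2$, and the expansion together with the Sobolev embedding $\|\partial_\theta^{\,j}h\|_{L^\infty}\le C\|h\|_{H^n}$ for $j\le n-1$ and $\|P_{i,j}\|_{L^2}<\infty$ gives $\|\nabla_{\partial_s}^{\,i}h\|_{L^2}\le C(c)\|h\|_{H^n}$, hence $G_c(h,h)\le C(c)\|h\|_{H^n}^2$. For the lower bound, discard all but the $i=0$ and $i=n$ terms and use $a_0,a_n>0$, $|c'|\ge\delta$:
\[G_c(h,h)\ge\delta\big(a_0\|h\|_{L^2}^2+a_n\|\nabla_{\partial_s}^{\,n}h\|_{L^2}^2\big).\]
Rearranging the identity as $\mu^n\partial_\theta^{\,n}h=\nabla_{\partial_s}^{\,n}h-\sum_{j=1}^{n-1}P_{n,j}\partial_\theta^{\,j}h$ yields $\|\partial_\theta^{\,n}h\|_{L^2}\le M^n\|\nabla_{\partial_s}^{\,n}h\|_{L^2}+C(c)\sum_{j=1}^{n-1}\|\partial_\theta^{\,j}h\|_{L^\infty}$, and a Gagliardo--Nirenberg‑type interpolation (Ehrling) inequality $\|\partial_\theta^{\,j}h\|_{L^\infty}\le\varepsilon\|\partial_\theta^{\,n}h\|_{L^2}+C_\varepsilon\|h\|_{L^2}$, valid on $[0,1]$ for $1\le j\le n-1$, lets us absorb the lower‑order terms into $\tfrac12\|\partial_\theta^{\,n}h\|_{L^2}$, giving $\|\partial_\theta^{\,n}h\|_{L^2}\le C(c)\big(\|\nabla_{\partial_s}^{\,n}h\|_{L^2}+\|h\|_{L^2}\big)$. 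Finally the interpolation bound $\|h\|_{H^n}^2\le C\big(\|\partial_\theta^{\,n}h\|_{L^2}^2+\|h\|_{L^2}^2\big)$ closes the loop: $\|h\|_{H^n}^2\le C(c)\,G_c(h,h)$.

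The main obstacle is the lower bound: inverting the relation between $\nabla_{\partial_s}^{\,n}h$ and $\partial_\theta^{\,n}h$ introduces the lower‑order terms $P_{n,j}\partial_\theta^{\,j}h$, whose coefficients need not be bounded (only $L^2$), so a crude estimate reintroduces $\|\partial_\theta^{\,n}h\|$ with too large a constant. The interpolation trick above resolves this, but one must check that all constants --- $\delta$, $M$, the norms $\|P_{i,j}\|_{L^2}$, and the interpolation constants $C_\varepsilon$ --- depend only on $c$ and not on $h$, so that the outcome is a genuine $c$‑dependent, $h$‑uniform norm equivalence. The remaining ingredients (the combinatorial expansion of $\nabla_{\partial_s}^{\,i}$, Sobolev embeddings on the bounded interval $[0,1]$, and the chain ``norm equivalence $\Rightarrow$ strong metric $\Rightarrow$ point‑separating geodesic distance'') are routine.
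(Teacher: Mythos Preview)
Your argument is correct, but the paper takes a much shorter route. Instead of expanding $\nabla_{\partial_s}^{\,i}h$ in the $\theta$-derivatives and controlling the coefficient functions $P_{i,j}$ via Sobolev embeddings and an Ehrling absorption, the paper simply invokes reparametrization invariance (Proposition~\ref{para.inv}) to reduce to the case where $c$ is constant speed. Once $|c'|\equiv\ell_c$, one has $\|\nabla_{\partial_s}^{\,k}h\|_{L^2(ds)}^2=\ell_c^{\,1-2k}\|\partial_\theta^{\,k}h\|_{L^2(d\theta)}^2$, so $G_c$ is manifestly a weighted sum of $\|\partial_\theta^{\,k}h\|_{L^2}^2$ with positive top and bottom weights, hence equivalent to $\|h\|_{H^n}^2$ with constants depending only on $\ell_c$ and $n$. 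What this hides --- and what your direct proof makes explicit --- is that passing from a general $c$ to its constant-speed reparametrization requires knowing that $h\mapsto h\circ\varphi$ is an isomorphism of $H^n$ for $\varphi\in\diff$; the paper is implicitly relying on the composition lemma from \cite{ebin1970manifold} already used in Proposition~\ref{prop:diff is group}. Your approach avoids that reduction entirely, gives a more transparent dependence of the constants on $c$ (through $\delta$, $M$, and the $L^2$-norms of the $P_{n,j}$), and would adapt more readily to settings without a reparametrization action; the paper's approach is quicker and highlights the role of reparametrization invariance as a structural tool.
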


The fact that $G$ is a strong Riemannian metric is easy to verify directly: by reparametrization invariance, we may assume $c$ is a constant speed curve. Thus, for any $h\in T_c\Ind$, $\|\nabla^k_{\partial_s}h\|^2_{L^2(ds)}=\frac{1}{\ell_c^{2k-1}}\|\partial_\theta^kh\|_{L^2(d\theta)}$, which implies
\[C^{-1} 
\|h\|_{G_c}\le \|h\|_{H^n}\le C\|h\|_{G_c},\]
where $C>0$ is a constant depending only on $c$ and $n$.

For $h\in T_c  \Ind$, we will often use $\|h\|_{L^2(ds)}$ to denote $\sqrt{\intop_0^1\langle h, h \rangle|c'|d\theta}$ and, similarly, $\|\nabla^i_{\partial_s}h\|_{L^2(ds)}\coloneqq\sqrt{\intop_0^1\langle\nabla^i_{\partial_s}h,\nabla^i_{\partial_s} h \rangle|c'|d\theta}$. With this notation, 
\[\len(\gamma)=\intop_0^1\sqrt{\sum_{0\le i\le n}a_i\|\nabla^i_{\partial_s}\gamma_t\|^2_{L^2(ds)}}dt,\]
where $\gamma_t\in  H^n\big((0,1),\mathbb{R}^d\big)$ is the derivative of the path $\gamma$ with respect to $t$. We will also often write $\gamma$ as $\gamma(t)(\theta)=\gamma(t,\theta)$ where $\theta$ is the parameter of the curve $\gamma(t)$.

\begin{cor}
\label{cor: dist inv}
The distance function is reparametrization invariant, meaning 
\[ d_G(c_0,c_1)=d_G(c_0\circ\varphi,c_1\circ\varphi) \quad \forall\varphi\in\diff.\]
\end{cor}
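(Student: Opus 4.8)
The plan is to produce a length-preserving bijection between the path spaces $\Gamma(c_0,c_1)$ and $\Gamma(c_0\circ\varphi,c_1\circ\varphi)$, so that the two infima defining $d_G$ agree. Fix $\varphi\in\diff$, and for a path $\gamma\in\Gamma(c_0,c_1)$ define $\gamma^\varphi$ by $\gamma^\varphi(t)(\theta)\coloneqq\gamma(t)\big(\varphi(\theta)\big)$. The first task is to check that $\gamma^\varphi$ is an admissible path, i.e.\ $\gamma^\varphi\in\Gamma(c_0\circ\varphi,c_1\circ\varphi)$: the endpoint conditions $\gamma^\varphi(0)=c_0\circ\varphi$, $\gamma^\varphi(1)=c_1\circ\varphi$ are immediate. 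For the regularity I would observe that right composition $R_\varphi\colon c\mapsto c\circ\varphi$ is a \emph{bounded linear} operator on $H^n\big((0,1),\mathbb{R}^d\big)$ --- linearity is pointwise, and boundedness follows from the same composition estimates (the Ebin lemmas) invoked in the proof of \Cref{prop:diff is group}, or via the closed graph theorem --- and that $R_\varphi$ restricts to a bijection $\Ind\to\Ind$ with inverse $R_{\varphi^{-1}}$, since $\diff$ acts on $\Ind$ by reparametrization (\Cref{prop:diff is group}). Being bounded and linear, $R_\varphi$ is smooth, so $\gamma^\varphi=R_\varphi\circ\gamma$ is continuous and piecewise $C^1$ whenever $\gamma$ is, and by the chain rule $\partial_t\gamma^\varphi(t)=R_\varphi\big(\partial_t\gamma(t)\big)=\big(\partial_t\gamma(t)\big)\circ\varphi$.

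Next I would compute the length of $\gamma^\varphi$. Using the expression for $\len$ from \Cref{def: riem dist} together with the identity $\partial_t\gamma^\varphi(t)=(\partial_t\gamma(t))\circ\varphi$ and \Cref{para.inv} applied with base curve $\gamma(t)$ and tangent vectors $h=k=\partial_t\gamma(t)$, one gets, for every $t$,
\[
G_{\gamma^\varphi(t)}\big(\partial_t\gamma^\varphi(t),\partial_t\gamma^\varphi(t)\big)=G_{\gamma(t)\circ\varphi}\big((\partial_t\gamma(t))\circ\varphi,(\partial_t\gamma(t))\circ\varphi\big)=G_{\gamma(t)}\big(\partial_t\gamma(t),\partial_t\gamma(t)\big),
\]
and hence $\len(\gamma^\varphi)=\len(\gamma)$ after integrating in $t$. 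Since $\gamma\mapsto\gamma^\varphi$ maps $\Gamma(c_0,c_1)$ into $\Gamma(c_0\circ\varphi,c_1\circ\varphi)$ and preserves length, passing to infima gives $d_G(c_0\circ\varphi,c_1\circ\varphi)\le d_G(c_0,c_1)$.

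Finally, applying this inequality with $\varphi$ replaced by $\varphi^{-1}\in\diff$ and with the pair $(c_0\circ\varphi,c_1\circ\varphi)$ in place of $(c_0,c_1)$ yields $d_G(c_0,c_1)=d_G\big((c_0\circ\varphi)\circ\varphi^{-1},(c_1\circ\varphi)\circ\varphi^{-1}\big)\le d_G(c_0\circ\varphi,c_1\circ\varphi)$, the reverse inequality; equality follows. (Equivalently: $\gamma\mapsto\gamma^\varphi$ has inverse $\gamma\mapsto\gamma^{\varphi^{-1}}$, so it is a length-preserving bijection of the two path spaces.)

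I expect the only point needing genuine care to be the regularity/interchange step of the first paragraph: verifying that $R_\varphi$ really does send $H^n$-paths to $H^n$-paths in $\Ind$ with $\partial_t$ commuting past the reparametrization. Once $R_\varphi$ is identified as a bounded linear self-map of $H^n\big((0,1),\mathbb{R}^d\big)$ preserving the open set $\Ind$, this is automatic (alternatively one can verify $\partial_t(\gamma(t,\varphi(\theta)))=(\partial_t\gamma)(t,\varphi(\theta))$ by hand), and everything else reduces to a direct application of \Cref{para.inv}.
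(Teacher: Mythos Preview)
Your proof is correct and follows essentially the same approach as the paper: establish a length-preserving bijection $\gamma\mapsto\gamma.\varphi$ between $\Gamma(c_0,c_1)$ and $\Gamma(c_0\circ\varphi,c_1\circ\varphi)$ via \Cref{para.inv}, then conclude equality of infima. You are in fact slightly more careful than the paper about the regularity/admissibility step (identifying $R_\varphi$ as a bounded linear map to justify that $\gamma^\varphi$ is piecewise $C^1$ with $\partial_t\gamma^\varphi=(\partial_t\gamma)\circ\varphi$), which the paper leaves implicit.
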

\begin{proof}
\Cref{prop:diff is group} guarantees, for any $\varphi\in\diff$, the existence of an inverse $\varphi^{-1}\in\diff$. Thus, the map taking a path $\gamma=\gamma(t,\theta)\in\Gamma(c_0,c_1)$ to the path \[\gamma.\varphi\coloneqq\gamma(t,\varphi(\theta))\in\Gamma(c_0\circ\varphi,c_1\circ\varphi)\]
is a bijection, as demonstrated by the existence of the inverse map 
\[\Gamma(c_0\circ\varphi,c_1\circ\varphi)\ni\eta\mapsto\eta.(\varphi^{-1})\in\Gamma(c_0,c_1).\] 
By \Cref{para.inv}, \[G_{\gamma.\varphi(t)}\left(\partial_t\gamma.\varphi,\partial_t\gamma.\varphi\right)=G_{\gamma(t)}\left(\partial_t\gamma,\partial_t\gamma\right)\qquad \forall t\in[0,1],\] thus,
 $\len(\varphi.\gamma)=\len(\gamma)$. We conclude 
\begin{equation*}
    \begin{split}
        d_G(c_0,c_1)=&\inf_{\gamma\in\Gamma(c_0,c_1)}\len(\gamma)=\inf_{\gamma\in\Gamma(c_0,c_1)}\len(\gamma.\varphi)\\ =&\inf_{\eta\in\Gamma(c_0\circ\varphi,c_1\circ\varphi)}\len(\eta)=d_G(c_0\circ\varphi,c_1\circ\varphi).
    \end{split}
\end{equation*}
\end{proof}

\begin{definition}
    Let $n\ge2$ and $d\ge1$. The \textbf{curve length} of $c\in\Ind$ is defined as 
    \[\ell_c\coloneqq\int_0^1|c'|d\theta=\|1\|_{L^2(ds)}^2.\]
\end{definition}
The following has been established by \cite[Lemma~6.6]{Bauer2020}:
\begin{prop}
\label{prop: Lip cont.}
    The map $c\mapsto\ell_c^{\nicefrac{3}{2}}$ is Lipschitz continuous on any metric ball in $\Big(\Ind,\ d_G\Big)$. Moreover, the Lipschitz constant in $B_r(c_0)$ depends only on $r,\ell_{c_0}$. 
\end{prop}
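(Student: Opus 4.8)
The plan is to establish a differential inequality for $t\mapsto \ell_{\gamma(t)}^{3/2}$ along paths $\gamma$, and then combine it with a Grönwall argument (to control $\ell_c$ on metric balls) and one more integration. First I would record the first-variation formula for curve length. If $\gamma:[0,1]\to\Ind$ is piecewise $C^1$, then on each $C^1$ piece $\ell_{\gamma(t)}=\int_0^1|\partial_\theta\gamma(t,\theta)|\,d\theta$ is $C^1$ in $t$ — differentiation under the integral is justified because $|\partial_\theta\gamma|$ is bounded below locally uniformly in $t$ and $\partial_t\partial_\theta\gamma$ is continuous — and, using $\partial_\theta\gamma_t=|\partial_\theta\gamma|\,\nabla_{\partial_s}\gamma_t$ with $\nabla_{\partial_s}$ taken with respect to the base curve $\gamma(t)$,
\[
\frac{d}{dt}\ell_{\gamma(t)}=\intop_0^1\Big\langle\tfrac{\partial_\theta\gamma}{|\partial_\theta\gamma|},\nabla_{\partial_s}\gamma_t\Big\rangle\,ds .
\]
Since $\big|\tfrac{\partial_\theta\gamma}{|\partial_\theta\gamma|}\big|\equiv1$ and $\|1\|_{L^2(ds)}=\sqrt{\ell_{\gamma(t)}}$, two applications of Cauchy--Schwarz give $\big|\tfrac{d}{dt}\ell_{\gamma(t)}\big|\le\sqrt{\ell_{\gamma(t)}}\,\|\nabla_{\partial_s}\gamma_t\|_{L^2(ds)}$, hence
\[
\Big|\frac{d}{dt}\ell_{\gamma(t)}^{3/2}\Big|=\tfrac32\sqrt{\ell_{\gamma(t)}}\Big|\frac{d}{dt}\ell_{\gamma(t)}\Big|\le\tfrac32\,\ell_{\gamma(t)}\,\|\nabla_{\partial_s}\gamma_t\|_{L^2(ds)}.
\]

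Next I would bound $\|\nabla_{\partial_s}\gamma_t\|_{L^2(ds)}$ by $\|\gamma_t\|_{G_{\gamma(t)}}$. If $a_1>0$ this is immediate with constant $a_1^{-1/2}$. In general, passing to an arc-length parametrization of $\gamma(t)$ (on an interval of length $\ell_{\gamma(t)}$) and applying a standard interpolation (Gagliardo--Nirenberg) inequality of the form $\|u'\|_{L^2}\le C_n\|u^{(n)}\|_{L^2}^{1/n}\|u\|_{L^2}^{1-1/n}+C_n\,\ell^{-1}\|u\|_{L^2}$, together with Young's inequality and $a_0,a_n>0$, yields
\[
\|\nabla_{\partial_s}\gamma_t\|_{L^2(ds)}\le\big(A+B\,\ell_{\gamma(t)}^{-1}\big)\,\|\gamma_t\|_{G_{\gamma(t)}},
\]
with $A,B\ge0$ depending only on $n$ and $a_0,\dots,a_n$. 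Combining with the previous display and using $\ell\le\ell^{3/2}+1$,
\[
\Big|\frac{d}{dt}\ell_{\gamma(t)}^{3/2}\Big|\le\tfrac32\big(A\,\ell_{\gamma(t)}+B\big)\,\|\gamma_t\|_{G_{\gamma(t)}}\le K\big(\ell_{\gamma(t)}^{3/2}+1\big)\,\|\gamma_t\|_{G_{\gamma(t)}},\qquad K:=\tfrac32(A+B);
\]
the role of the exponent $3/2$ is precisely that the right-hand side stays bounded as $\ell_{\gamma(t)}\to0$.

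From this I would extract two consequences. First, boundedness of curve length on balls: for $c\in B_R(c_0)$ choose $\gamma\in\Gamma(c_0,c)$ with $\len(\gamma)<R$, and set $\phi(t)=\ell_{\gamma(t)}^{3/2}+1>0$; the inequality above gives $\big|(\log\phi)'\big|\le K\|\gamma_t\|_{G_{\gamma(t)}}$, so integrating and exponentiating yields $\ell_c^{3/2}+1\le(\ell_{c_0}^{3/2}+1)e^{KR}$, i.e.\ $\ell_c\le L_0(R,\ell_{c_0})$ for an explicit $L_0$. Second, the Lipschitz bound: given $c_1,c_2\in B_r(c_0)$ and $\varepsilon>0$, pick $\gamma\in\Gamma(c_1,c_2)$ with $\len(\gamma)<d_G(c_1,c_2)+\varepsilon$; then for all $t$, $d_G(\gamma(t),c_0)\le\len(\gamma|_{[0,t]})+d_G(c_1,c_0)<3r+\varepsilon$, so $\gamma$ stays in $B_{4r}(c_0)$ and $\ell_{\gamma(t)}\le L:=L_0(4r,\ell_{c_0})$ for all $t$. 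Integrating the differential inequality over $[0,1]$ (summing over the finitely many $C^1$ pieces) gives
\[
\big|\ell_{c_1}^{3/2}-\ell_{c_2}^{3/2}\big|\le\intop_0^1\Big|\tfrac{d}{dt}\ell_{\gamma(t)}^{3/2}\Big|\,dt\le\tfrac32(AL+B)\intop_0^1\|\gamma_t\|_{G_{\gamma(t)}}\,dt=\tfrac32(AL+B)\,\len(\gamma),
\]
and letting $\varepsilon\to0$ yields $\big|\ell_{c_1}^{3/2}-\ell_{c_2}^{3/2}\big|\le\tfrac32(AL+B)\,d_G(c_1,c_2)$, with Lipschitz constant depending only on $r$, $\ell_{c_0}$ (and the fixed data $n,a_0,\dots,a_n$).

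I expect the main obstacle to be the middle step — controlling $\|\nabla_{\partial_s}\gamma_t\|_{L^2(ds)}$ by $\|\gamma_t\|_{G_{\gamma(t)}}$ when $a_1=0$, since then the first-order term is absent from $G$ and the interpolation inequality unavoidably introduces the $\ell^{-1}_{\gamma(t)}$ factor. Tracking how this factor interacts with the $\sqrt{\ell}$ from Cauchy--Schwarz is exactly what forces the $\ell^{3/2}$ normalization and makes the Grönwall argument (for boundedness of $\ell$ on metric balls, which in turn is needed to pin the Lipschitz constant down to a function of $r$ and $\ell_{c_0}$) necessary rather than optional.
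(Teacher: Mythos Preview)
The paper does not give its own proof of this proposition: it is stated as a direct citation of \cite[Lemma~6.6]{Bauer2020}, so there is no argument in the paper to compare against beyond the surrounding discussion in \Cref{subsec: Previous Work}, where the paper records the Sobolev-type estimates \eqref{eq: Sobolev estimates} and the pointwise inequality $\|\nabla_{\partial_s}h\|_{L^2(ds)}\le C\max(1,\ell_c^{-1})\|h\|_{G_c}$ as the key inputs. Your proposal is precisely a fleshed-out version of the argument behind that cited lemma: the first-variation formula for $\ell_{\gamma(t)}$, Cauchy--Schwarz to extract the $\sqrt{\ell}$ factor, the interpolation estimate giving $\|\nabla_{\partial_s}h\|_{L^2(ds)}\lesssim(1+\ell^{-1})\|h\|_{G}$, and then the Gr\"onwall step followed by integration along an almost-minimizing path. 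This is the standard route and matches what the paper invokes; your identification of the exponent $3/2$ as exactly what cancels the $\ell^{-1}$ from interpolation against the $\ell^{1/2}$ from Cauchy--Schwarz is the crux, and is correct.
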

It follows that, for a given metric ball $B_r(c_0)\subset\Ind$,  \[|\ell_c^{\nicefrac{3}{2}}-\ell_{c_0}^{\nicefrac{3}{2}}|\le L(\ell_{c_0},r)r \qquad \forall c\in B_r(c_0).\] Thus, We conclude:
\begin{cor}
    \label{cor: boundness of ellc}
    The function $c\mapsto\ell_c$ is bounded on any metric ball in $\Ind$.
\end{cor}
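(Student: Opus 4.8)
The plan is to read this off directly from \Cref{prop: Lip cont.}. Fix a metric ball $B_r(c_0)\subset\Ind$. By the Lipschitz continuity of the map $c\mapsto\ell_c^{\nicefrac{3}{2}}$ on $B_r(c_0)$, with Lipschitz constant $L(\ell_{c_0},r)$ depending only on $r$ and $\ell_{c_0}$, and since every $c\in B_r(c_0)$ satisfies $d_G(c,c_0)<r$, we get
\[
\bigl|\ell_c^{\nicefrac{3}{2}}-\ell_{c_0}^{\nicefrac{3}{2}}\bigr|\le L(\ell_{c_0},r)\,r\qquad\forall c\in B_r(c_0),
\]
exactly the inequality recorded just before the statement. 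In particular $\ell_c^{\nicefrac{3}{2}}\le \ell_{c_0}^{\nicefrac{3}{2}}+L(\ell_{c_0},r)\,r=:M$, a constant independent of $c$. Since $t\mapsto t^{\nicefrac{2}{3}}$ is monotone increasing on $[0,\infty)$, this yields $\ell_c\le M^{\nicefrac{2}{3}}$ for all $c\in B_r(c_0)$, which is the asserted bound, and it depends only on $\ell_{c_0}$ and $r$.

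There is essentially no obstacle here: the analytic content is entirely contained in \Cref{prop: Lip cont.}, and the corollary is just the trivial observation that a $d_G$-Lipschitz function is bounded on a $d_G$-ball of finite radius, together with the fact that this boundedness survives the continuous monotone substitution $t\mapsto t^{\nicefrac{2}{3}}$. The only point worth flagging is that one should not be tempted to argue via boundedness of $B_r(c_0)$ in the ambient $H^n$-topology (which need not hold); the estimate above is intrinsically metric, so no such detour is needed.
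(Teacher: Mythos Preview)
Your argument is correct and is exactly the paper's approach: the paper records the inequality $|\ell_c^{3/2}-\ell_{c_0}^{3/2}|\le L(\ell_{c_0},r)\,r$ immediately before the corollary and simply writes ``Thus, we conclude,'' leaving the monotone substitution $t\mapsto t^{2/3}$ implicit. Your write-up just makes that last step explicit.
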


\subsection{The Space of Immersed Closed Curves}
\label{subsec: closed curve}
Another space significant for the discussion in this thesis is the space of immersed closed curves, i.e., curves $S^1\rightarrow\mathbb{R}^d$.
Unlike $\Ind$, the space of closed curves is metrically complete as shown in \cite[theorem~4.3]{bruveris2015completenesspropertiessobolevmetrics}. Examining the differences between these two spaces provides a natural starting point for analyzing the metric completion of $\Ind$.
The space of immersed closed curves can be constructed following the same lines as the construction of the space of immersed open curves: \par
Define
\[\mathcal{I}^n\big(S^1,\mathbb{R}^d\big)\coloneqq\bigg\{c\in H^n\big(S^1,\mathbb{R}^d\big)\bigg|\ c'\ne0\quad\forall\theta\in S^1\bigg\},\] 
it is an open subset of the Hilbert manifold $H^n(S^1,\mathbb{R}^d)$ by the same argument as in  \Cref{prop:openess}. Hence, it can be viewed as a Hilbert manifold with the tangent space at a curve $c\in\mathcal{I}^n\big(S^1,\mathbb{R}^d\big)$
\[ T_c\mathcal{I}^n\big(S^1,\mathbb{R}^d\big) \cong H^n(S^1,\mathbb{R}^d). \]
We endow the manifold with similar Sobolev-type Riemannian metrics 
\begin{equation}
\label{eq: S1 metric}
g_c(h,k)\coloneqq\sum_{i=0}^n a_i \intop_{S^1} \langle\nabla^i_{\partial_s}h, \nabla^i_{\partial_s}k\rangle ds, \quad a_0,a_n>0,\ a_i\ge0.
\end{equation}
These, in turn, induce the metric
\begin{equation*}
\begin{split}
    d_g(c_0,c_1)\coloneqq&\inf_{\gamma\in\Gamma(c_0,c_1)} \len(\gamma)\coloneqq\inf_{\gamma\in\Gamma(c_0,c_1)}\intop_0^1 \sqrt{g_c(\gamma_t(t),\gamma_t(t))}dt\\
    =&\inf_{\gamma\in\Gamma(c_0,c_1)}\intop_0^1 \sqrt{\sum_{0\le i\le n}a_i\|\nabla^i_{\partial_s}\gamma_t\|^2_{L^2(ds)}}dt,
    \end{split}
\end{equation*}
where the infimum is taken over all paths in
\[\Gamma(c_0,c_1)=\bigg\{\gamma\in C^0\Big([0,1],\  \mathcal{I}^n\big(S^1,\mathbb{R}^d\big)\Big)\bigg|\gamma \text{ p.w. }C^1, \ \gamma(0)=c_0, \  \gamma(1)=c_1\bigg\}.\]

As in the case of open curves, the closed curve Riemannian metric is also reparametrization  
invariant; hence, the induced metric is as well. \par

%  embedding closed in open
Naturally, a closed curve $S^1\rightarrow\mathbb{R}^d$ can be thought of as a special case of an open curve $[0,1]\rightarrow\mathbb{R}^d$:
\begin{prop}
\label{prop:emb. clos-open}
Under the identification $S^1\cong\nicefrac{[0,2\pi]}{0\sim2\pi}$, there is a natural embedding 
\[\mathcal{I}^n\big(S^1,\mathbb{R}^d\big)\hookrightarrow\Ind \]
by taking a closed curve $c$ to an open curve $u$ defined as
\[ u(\theta)\coloneqq c(2\pi\theta), \quad \forall \theta \in [0,1]. \]
\end{prop}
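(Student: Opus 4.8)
The plan is to realise the claimed embedding, in the global inclusion charts of both manifolds, as the restriction of an explicit bounded linear injection between Hilbert spaces. Writing $m\colon[0,1]\to[0,2\pi]$ for the affine rescaling $m(\theta)=2\pi\theta$ and using the identification $S^1\cong\nicefrac{[0,2\pi]}{0\sim2\pi}$, set $\Phi(c)\coloneqq c\circ m$ for $c\in H^n\big(S^1,\mathbb{R}^d\big)$; this is the map $c\mapsto u$ of the statement.

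First I would verify that $\Phi$ is a bounded linear map $H^n\big(S^1,\mathbb{R}^d\big)\to H^n\big((0,1),\mathbb{R}^d\big)$ carrying $\mathcal{I}^n\big(S^1,\mathbb{R}^d\big)$ into $\Ind$. Applying the chain rule to the absolutely continuous representative of $c$ (\Cref{prop:abs cont}), $u=\Phi(c)$ has $u^{(k)}(\theta)=(2\pi)^k c^{(k)}(2\pi\theta)$ for $k=0,\dots,n$, and the change-of-variables formula gives $\|u^{(k)}\|_{L^2(0,1)}^2=(2\pi)^{2k-1}\|c^{(k)}\|_{L^2(0,2\pi)}^2$; summing over $k$ yields $\|\Phi(c)\|_{H^n}\le C\,\|c\|_{H^n}$, so $\Phi$ is bounded and linear. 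The identity $u'(\theta)=2\pi\,c'(2\pi\theta)$ together with $c'\ne 0$ on $S^1$ shows $u'$ is nowhere zero on $[0,1]$, so $\Phi$ does restrict to a map between the two immersion spaces, and it is injective because $m$ is onto.

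Next I would show $\Phi$ is a homeomorphism onto its image, which is what upgrades the statement from ``injective smooth immersion'' to ``embedding''. The inverse on the image is given explicitly by $u\mapsto\big(\phi\mapsto u(\nicefrac{\phi}{2\pi})\big)$, and the same change-of-variables estimate run backwards gives $\|c\|_{H^n}^2=\sum_{k=0}^n(2\pi)^{1-2k}\|u^{(k)}\|_{L^2(0,1)}^2\le C'\,\|u\|_{H^n}^2$, so $\Phi^{-1}$ is bounded; equivalently, $\Phi\big(H^n(S^1,\mathbb{R}^d)\big)=\{f\in H^n\big((0,1),\mathbb{R}^d\big):f^{(k)}(0)=f^{(k)}(1),\ k=0,\dots,n-1\}$ is a closed subspace (the endpoint evaluations of the first $n-1$ derivatives being continuous on $H^n$ by \Cref{prop:abs cont}), and one may instead invoke the open mapping theorem. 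Since both spaces carry the single inclusion chart, in charts $\Phi$ is the restriction of a bounded linear map, hence $C^\infty$, with derivative equal to $\Phi$ itself at every point, hence injective; combined with the homeomorphism-onto-image property, $\Phi$ is a smooth embedding of Hilbert manifolds.

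All of the estimates here are elementary changes of variables; the only step that is not completely formal is identifying the image of $\Phi$ as a closed subspace, i.e.\ checking continuity of $\Phi^{-1}$ and thereby ruling out that the embedded copy of the closed-curve space inherits a strictly finer topology than the subspace topology from $\Ind$. This is precisely where the continuity of endpoint derivative evaluations coming from \Cref{prop:abs cont} is needed, and I expect it to be the only point worth spelling out in detail.
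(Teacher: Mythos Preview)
Your argument is correct. The paper does not actually supply a proof of this proposition: it is stated and immediately followed by the remark that matching endpoints alone is insufficient, i.e.\ that the image consists of those $f\in H^n\big((0,1),\mathbb{R}^d\big)$ with $f^{(k)}(0)=f^{(k)}(1)$ for $k=0,\dots,n-1$. Your write-up fills in precisely the details the paper leaves implicit---boundedness of $\Phi$ and $\Phi^{-1}$ via change of variables, the identification of the image as a closed subspace cut out by the continuous endpoint evaluations from \Cref{prop:abs cont}, and the observation that in the global inclusion charts $\Phi$ is the restriction of a bounded linear injection and hence a smooth embedding. There is nothing to compare approaches against; you have simply supplied the omitted verification, and done so correctly.
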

It is important to emphasize that, in this context, having equal endpoints for an open curve is insufficient for it to be considered closed.
An open curve $c\in\Ind$ must satisfy
\[c^{(i)}(0)=c^{(i)}(1), \quad i=0,...,n-1 \]
to be considered closed, i.e. an element of $\mathcal{I}\big(S^1,\mathbb{R}^d\big).$ \par

This shows that $\mathcal{I}^n\big(S^1,\mathbb{R}^d\big)$ can be considered as an embedded sub-manifold of $\Ind$, thus allowing for two different ways of measuring the distance between a pair of closed curves. The first one, in terms of $d_g$ by restricting paths to those with the image contained in $\mathcal{I}^n\big(S^1,\mathbb{R}^d\big)$, and the second, in terms of $d_G$,  by allowing "ripping" and "stitching together" of closed curves when traveling between them.\par
If we chose the same set of coefficients for $G$ and $g$, the relation 
\[d_G\le d_g\]
is clear simply by the restriction of the set of paths considered for the definition of the distance functions.\par

 \cite[theorem~4.3]{bruveris2015completenesspropertiessobolevmetrics} establishes that $\mathcal{I}^n \big(S^1,\mathbb{R}^d\big)$ is a complete metric space with respect to $d_g$. This, however, does not guarantee it is complete as a sub-manifold of $\mathcal{I}^n\big(S^1,\mathbb{R}^d\big)$. For example, when considering the space of planar open curves, we do not know whether the sequence of circles with vanishing radius,
\begin{equation}
    \label{eq: vanishing circles}
(s_n)_{n=1}^\infty\subset\mathcal{I}^n([0,1],\mathbb{R}^2) \quad s_n\coloneqq\frac{1}{n}(cos(2\pi\theta),sin(2\pi\theta)),\end{equation}
 is Cauchy w.r.t.\ $d_G$, even though it is surely not Cauchy w.r.t.\ $d_g$ since it would mean that it converges to a curve of zero-length by continuity of $\ell_c$.

\subsection{Previous Work}
\label{subsec: Previous Work}
The completeness properties of immersed curve spaces with reparameterization invariant metrics have been studied on various occasions. There are three notions of completeness for a Riemannian manifold:
\begin{enumerate}[(a)]
    \item \label{num: met comp} Metric completeness with respect to the geodesic distance;
    \item \label{num: geo comp} Geodesic completeness: existence of geodesics for infinite time;
    \item \label{num: join comp} Geodesic convexity: the existence of a minimizing geodesic between any pair of distinct points.
\end{enumerate}
For finite-dimensional manifolds, the Hopf-Rinow theorem implies that \ref{num: met comp} and \ref{num: geo comp} are equivalent, and both imply \ref{num: join comp}. For infinite-dimensional manifolds, however, the Hopf-Rinow theorem does not hold \cite{Hopf-rinowAtkin}. In this case, it is only guaranteed, for strong Riemannian metrics, that \ref{num: met comp} implies \ref{num: geo comp}; any of the other entailments may fail.\par

It is shown in \cite{bruveris2014geodesic}, that $\mathcal{I}^n
\big(S^1,\mathbb{R}^2\big)$, endowed with Riemannian metrics of the form \eqref{eq: S1 metric} for $n\ge2$, is geodesically complete. Later, \cite{bruveris2015completenesspropertiessobolevmetrics}, presents a more extensive result, stating that $\mathcal{I}^n\big(S^1,\mathbb{R}^d\big)$ for any $d\ge1$ and $n\ge2$, with metrics as in \eqref{eq: S1 metric} satisfies \ref{num: met comp}, \ref{num: geo comp}, and \ref{num: join comp}. \par

Another family of immersion spaces that have been studied are spaces of immersed curves endowed with length-weighted Sobolev-type Riemannian metrics, i.e., metrics of the form \eqref{eq: S1 metric} with $a_i=a_i(\ell_c)$ being length dependent. An important family of such metrics is scale-invariant Riemannian metrics defined on $\mathcal{I}^n\big(D,\mathbb{R}^d\big)$ by 
\[g^{SI}_c(h,k)\coloneqq\sum_{i=0}^n A_i\cdot\ell_c^{2k-3}{} \intop_{D} \langle\nabla^i_{\partial_s}h, \nabla^i_{\partial_s}k\rangle ds, \quad A_0,A_n>0,\ A_i\ge0,\]
where either $D=S^1$ or $D=[0,1]$. In this case it is easy to verify that $g_c^{SI}(h,k)=g_{\lambda c}^{SI}(\lambda h,\lambda k)$ and thus $d_{g^{SI}}(c_0,c_1)=d_{g^{SI}}(\lambda c_0, \lambda c_1)$. \par

It is shown in \cite[Theorem~1.1]{bruveris2017completeness} that $\mathcal{I}^n\big(S^1,\mathbb{R}^d\big)$, for $n\ge2$, endowed with length-weighted Sobolev-type metrics, satisfies the three completeness properties under the condition that
\begin{equation}
    \label{eq: PW int conditions}
\underset{0\le k \le n}{\max} \int_0^1\ell^{\frac{1}{2}-k}\sqrt{a_k(\ell)}d\ell=\infty\quad \text{and} \quad\underset{0\le k \le n}{\max} \int_1^\infty\ell^{\frac{1}{2}-k}\sqrt{a_k(\ell)}d\ell=\infty.
\end{equation}
These conditions emerge as a way to ensure that any path in $\mathcal{I}^n\big(S^1,\mathbb{R}^d\big)$ with vanishing curve length or unbounded curve length has infinite length w.r.t.\ the induced geodesic distance. Notice that for $\mathcal{I}^n\big(S^1,\mathbb{R}^d\big)$, with $n\ge2$, both constant coefficient metrics as in \eqref{eq: S1 metric}, and scale invariant metrics as $g_c^{SI}$ satisfy \eqref{eq: PW int conditions}. \par 

This sequence of results was extended by \cite[Theorem 5.3]{Bauer2020} to spaces of immersed curves taking values in a finite-dimensional manifold $\mathcal{N}$. It implies, for $n\ge2$, that $\mathcal{I}^n\big(S^1,\mathcal{N}\big)$ with constant coefficient metrics as in \eqref{eq: S1 metric} satisfies \ref{num: met comp}, \ref{num: geo comp}, and \ref{num: join comp}. Moreover, it states that $\mathcal{I}^n\big(D,\mathcal{N}\big)$, with $n\ge2$ and $D=S^1$ or $D=[0,1]$, endowed with length-weighted Sobolev-type metrics satisfying for some $\alpha>0$ that either $a_1(\ell)>\alpha\ell^{-1}$ or both $a_0(\ell)>\alpha\ell^{-3}$ and $a_k(\ell)\ge\alpha\ell^{2k-3}$ for some $k>1$, also satisfies the three completeness properties.  \par

All of the immersion spaces considered above share a common property that plays a key role in the completeness argument: the map $c\mapsto\ell_c$ is bounded from above and away from zero on any metric ball. This property is significant when considering the Sobolev-type inequalities for $k=0,...,n-1$:
\begin{equation}
    \begin{split}
        \label{eq: Sobolev estimates}
        \ell_c^{2k}&\|\nabla^k_{\partial_s}h\|^2_{L^2(ds)}\lesssim_n\|h\|^2_{L^2(ds)}+\ell_c^{2n}\|\nabla^n_{\partial_s}h\|_{L^2(ds)}^2 \\
        \ell_c^{2k}&\|\nabla^k_{\partial_s}h\|^2_{L^\infty}\lesssim_n \ell_c^{-1}\|h\|^2_{L^2(ds)}+\ell_c^{2n-1}\|\nabla^n_{\partial_s}h\|_{L^2(ds)}^2.
    \end{split}
\end{equation}
The boundedness of $c\mapsto\ell_c$ and $c\mapsto\ell_c^{-1}$ is used to show that the geodesic distance induced by reparametrization invariant metrics is equivalent to that induced by standard $H^n$ metrics on every metric ball, as done in the proof of \cite[Theorem~5.3]{Bauer2020}. The fact that the metrics are equivalent on metric balls is what implies completeness in the cases above. \par

For an immersion space $\mathcal{I}^n\left(D,\mathbb{R}^d\right)$ where $n\ge2$, endowed with a Riemannian metric $\mathcal{G}$, the boundedness of $c\mapsto\ell_c$ on metric balls is satisfied whenever the $\mathcal{G}$ satisfies for some uniform  constant $C>0$ 
\[\|\nabla_{\partial_s}h\|_{L^2(ds)}\le C\max(1,\ell_c^{-1})\|h\|_{\mathcal{G}_c} ,\qquad \forall h\in T_c\mathcal{I}^n\big(D,\mathbb{R}^d\big).\]
This is established by \cite[Lemma~6.6]{Bauer2020} for the case of $\Ind$, and holds for  $\mathcal{I}^n\left(S^1,\mathbb{R}^d\right)$ since it is embedded in $\Ind$. Particularly, \eqref{eq: Sobolev estimates} implies $c\mapsto\ell_c$ is bounded on metric balls in $\Ind$ endowed with a constant coefficient Riemannian metric of type \eqref{eq: type of metric}.
The boundedness of $c\mapsto\ell_c^{-1}$ on metric balls, however, is a property harder to satisfy; and does not hold for $\Ind$ endowed with constant coefficient metrics, as \cite[Example~6.1]{Bauer2020} demonstrates. A sufficient condition for the boundedness of $c\mapsto\ell_c^{-1}$ on metric balls is provided in \cite[Lemma~5.9]{Bauer2020}. It implies, for $\mathcal{I}^n\big(D,\mathbb{R}^d\big)$ endowed with a Riemannian metric $\mathcal{G}$, where $n\ge2$ and $D=S^1$ or $D=[0,1]$, that if for any metric ball $B_r(c_0)\subset\mathcal{I}^n\big(D,\mathbb{R}^d\big)$ there exists some $C>0$  such that  
\[C\ell_c^{\nicefrac{-1}{2}}\|\nabla_{\partial_s}h\|_{L^2(ds)}\le\|h\|_{\mathcal{G}_c},\qquad \forall h\in T_c\mathcal{I}^n\big(D,\mathbb{R}^d\big),\ \forall c\in B_r(c_0),\]
then $c\mapsto\ell_c^{-1}$ is bounded on any metric ball. \par

In addition, the upper inequality of \eqref{eq: Sobolev estimates} shows that in case $c\mapsto\ell_c$ and $c\mapsto\ell_c^{-1}$ are both bounded on metric balls, higher-order constant coefficient Sobolev-type Riemannian metrics with $a_0,a_n>0$ dominate lower-order terms. This is the case, for example, in $\mathcal{I}^n
\big(S^1,\mathbb{R}^d\big)$ with metrics of the form \eqref{eq: S1 metric}. However, this is not the case in $\Ind$ with constant coefficient metrics as in \eqref{eq: type of metric}. As a result, the choice of $a_i=0$ for $0<i<n$ may influence the metric structure of $\Big(\Ind,d_G\Big)$ for $G$ with constant coefficients. For this reason, in the results we present in \Cref{subsec:one dimensional resluts}, we assume explicitly that $a_2>0$, even if $n>2$. \par

The main result concerning the completeness properties of $\Ind$ with constant coefficient metrics is \cite[Theorem~6.3]{Bauer2020}. It implies that any Cauchy sequence with non-vanishing curve length is convergent. The proof relies on the Lipschitz continuity of $c\mapsto\ell_c^{\nicefrac{3}{2}}$ on metric balls mentioned in \Cref{prop: Lip cont.}, it guarantees the existence of a neighborhood containing the sequence on which $\ell_c$ is both bounded and bounded away from zero. In this neighborhood, the techniques used to prove the completeness of the closed curve space can be employed. On the other hand, \cite[Example~6.1]{Bauer2020} constructs a path $(0,1]\rightarrow\mathcal{I}^n\big([0,1],\mathbb{R}^d\big)$ that illustrates the metric incompleteness of $\Ind$ with constant coefficient metrics. This example provides the starting point for further analysis of the metric completion, and is examined in \Cref{subsec:example analysis} below. \par
 
 The path constructed in \cite[Example~6.1]{Bauer2020} is a path of straight-line segment curves. It takes advantage of straight lines having identically zero curvature, which eliminates the effects of terms of order $n\ge2$ on the length of the path. We begin \Cref{chap:results} with a similar approach, focusing on straight-line segment curves as a starting point, and expand the existing example of diverging Cauchy sequences.

\section{Results}
\label{chap:results}

\quad This section presents the thesis's contribution to the study of the metric completion of the space of immersed open curves. Throughout this section, we consider $\left(\Ind,d_G\right)$, where $G$ is of type \eqref{eq: type of metric}.\par 
\Cref{subsec:example analysis} examines three types of paths: linear shrinking, affine translation, and rotation. We provide bounds on the lengths of these paths and show that uniformly parametrized vanishing-length straight-line curve sequences are divergent Cauchy sequences.
\Cref{subsec:one dimensional resluts} examines whether it is possible that two such sequences with different parametrizations share the same limit point in the metric completion space. We examine this question in one-dimensional settings and prove, in this case, that the answer is negative and that multiple distinct limit points exist in the metric completion space. Moreover, we further characterize the completion space by bounding the set of possible limit points.
Lastly, \Cref{subsec: extension to multi-dimensional case} contains additional estimates on paths in $\Ind$ with $d>1$, and an additional construction of a path leaving the space in finite time.  \par
Throughout this section, we will reserve the use of the word `curve' for elements $c\in\Ind$, and `path' for paths $\gamma:I\rightarrow \Ind$ where $I\subset\mathbb{R}$ is an interval. Essentially, a path $\gamma$ is a `curve of curves' assigning to $t\in I$ the  curve $\gamma(t)$ in $\Ind$. 
When discussing a curve $c\in\Ind$, `starting point' will refer to $c(0)$. For a path $\gamma$ defined on a half-open interval, either $(0,1]$ or $[0,1)$, the `starting point' of $\gamma$  or `initial curve' is used to describe either $\gamma(1)$ or $\gamma(0)$ respectively. When discussing paths defined on closed intervals, we may refer to either of the endpoint curves as the initial curve. \par

\subsection{Analyzing the Existing Example of Diverging Cauchy Sequences}
\label{subsec:example analysis}
% Intro 

We begin by examining the basic example from \cite[Example~6.1]{Bauer2020}. Define the following path
\begin{equation}
\label{def: example path}
    \begin{split}
    \gamma:(0,1]&\rightarrow \mathcal{I}^n([0,1],\mathbb{R}^2)\\
    \gamma(t,\theta)&= \big{(}t\theta+f(t),g(t)\big{)},   
    \end{split}
\end{equation}
where $f,g:(0,1]:\rightarrow\mathbb{R}^2$ are continuous and piecewise differentiable.
The initial curve $\gamma(1)$ is a unit-speed segment of length 1 parallel to the $x$-axis with the starting of point $\big{(}f(1),g(1)\big{)}$. As $t\rightarrow0$, the length of the segment $\gamma(t)$ linearly decreases to zero. Simultaneously, $f$ and $g$, which are used as affine translations of the segment, position the curve such that the starting point, $\gamma(t,0)$, of the curve $\gamma(t)$ is $\big{(}f(t),g(t)\big{)}$.\par
As \cite[chapter~6]{Bauer2020} shows, the path in \eqref{def: example path} is of finite length under the assumption that 
\[\int_0^1|f'(t)|t^{1/2}dt<\infty\quad \text{and} \quad \int_0^1|g'(t)|t^{1/2}dt<\infty.\]
As a result, under these conditions, for any sequence $(0,1]\supset(t_m)_{m=1}^\infty, \ \text{s.t.} \  t_m
\longrightarrow0$, the corresponding sequence of curves, $\big{(}\gamma(t_m)\big{)}_{m=1}^\infty$, is a non-converging Cauchy sequence. 
Indeed, it is Cauchy since
\[d_G\big{(}\gamma(t_k),\gamma(t_m)\big{)}\le\len(\gamma|_{(0,\max{(t_k,t_m)}]})\underset{\max{(t_k,t_m)}\rightarrow0}{\longrightarrow}0,\]
where the limit follows from the above-mentioned claim that $\len(\gamma)<\infty$.
Yet, the only possible limit point would be a constant curve with an identically zero derivative, which does not belong to $\mathcal{I}^n\big([0,1],\mathbb{R}^2\big)$.\par
The integral constraints on $f,g$ are necessary to ensure that the path $\gamma$ in \eqref{def: example path} is of finite length. 
However, we will show below (\Cref{claim: straight-line Cauchy}) that any sequence of the form $\big{(}\gamma(t_m)\big{)}_{m=1}^\infty$  with $t_m\rightarrow 0$ is Cauchy, regardless of whether these bounds hold.   \par
\subsubsection{Bounds on Shrinking, Affine Translation, and Rotation}

% The cost of shrinking
Take any curve $c\in\Ind$; we examine the length of the path linearly shrinking $c$ up to zero length. 
\begin{lemma}
\label{lemma:shrink cost}
For any curve $c\in\Ind$, the length of the path 
\begin{equation*}
    \begin{split}
        \gamma:(0,1]&\rightarrow\Ind\\
        t&\mapsto tc
    \end{split}
\end{equation*}
is finite if and only if $c$ is a straight line, and in this case, there exists a constant $C=C(a_0,a_1)$, independent of $c$, such that 
\begin{equation}
    \len(\gamma)\le C\max(\ell_c^{1.5},\ell_c^{0.5})
\end{equation}
where $\ell_c$ is the length of the curve $c$.
\end{lemma}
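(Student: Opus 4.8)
The plan is to reduce the statement to an elementary one-variable integral. Since $\gamma(t)=tc$, the velocity of the path is $\partial_t\gamma(t)=c$ for every $t$, and rescaling the base curve rescales the arc-length operations transparently: writing $D_s=\tfrac1{|c'|}\partial_\theta$ and $ds_c=|c'|\,d\theta$ for the arc-length derivative and measure of $c$ itself, the base curve $tc$ has $\nabla_{\partial_s}=\tfrac1t D_s$ and $ds=t\,ds_c$, so $\nabla_{\partial_s}^i c=t^{-i}D_s^i c$. Substituting into \eqref{eq: type of metric},
\[
G_{tc}(c,c)=\sum_{i=0}^n a_i\,t^{\,1-2i}\,b_i,\qquad b_i:=\int_0^1\bigl|D_s^i c\bigr|^2\,ds_c ,
\]
with $b_1=\ell_c$ since $D_s c$ is the unit tangent. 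Thus $\len(\gamma)=\int_0^1\bigl(\sum_{i=0}^n a_i\,t^{1-2i}b_i\bigr)^{1/2}dt$, and everything reduces to the behaviour of this integrand as $t\to0$.

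For the dichotomy, note that $\int_0^1 t^{1/2-i}\,dt$ converges precisely for $i\le1$ (equal to $\tfrac23$ for $i=0$ and $2$ for $i=1$, divergent for $i\ge2$). Hence, if $\len(\gamma)<\infty$, bounding the integrand below by $\sqrt{a_ib_i}\,t^{1/2-i}$ forces $a_ib_i=0$ for all $i\ge2$; since $a_n>0$ this gives $b_n=0$, i.e.\ $D_s^n c\equiv0$ on $[0,1]$. The heart of this direction is the rigidity claim that $D_s^n c\equiv0$ forces $c$ to be a straight-line segment: integrating $D_s^n c=0$ repeatedly — each integration legitimate because $|c'|\ge\delta>0$ keeps every $D_s^j c$ absolutely continuous — shows the unit tangent $D_s c$ is a polynomial of degree $\le n-2$ in the arc-length coordinate $\sigma(\theta)=\int_0^\theta|c'|$; but $|D_s c|^2\equiv1$ forces every non-constant coefficient of this polynomial to vanish, so $D_s c$ is a constant unit vector $v$ and $c(\theta)=c(0)+\sigma(\theta)\,v$. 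Conversely, for such a segment $D_s^i c\equiv0$ for all $i\ge2$, so only the $i=0,1$ terms survive, $G_{tc}(c,c)=a_0\,t\,\|c\|_{L^2(ds_c)}^2+a_1\,t^{-1}\ell_c$, which is integrable in $t$ by the computation above; hence $\len(\gamma)<\infty$.

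For the quantitative bound, apply $\sqrt{x+y}\le\sqrt x+\sqrt y$ to the two surviving terms and integrate:
\[
\len(\gamma)\le\sqrt{a_0}\,\|c\|_{L^2(ds_c)}\!\int_0^1\! t^{1/2}\,dt+\sqrt{a_1\ell_c}\!\int_0^1\! t^{-1/2}\,dt=\tfrac23\sqrt{a_0}\,\|c\|_{L^2(ds_c)}+2\sqrt{a_1\ell_c}.
\]
It then remains to bound $\|c\|_{L^2(ds_c)}^2=\int_0^1|c|^2\,ds_c$ by a power of $\ell_c$: integrating in the arc-length coordinate, this equals $\int_0^{\ell_c}|c(0)+\sigma v|^2\,d\sigma$, which for a segment passing through the origin — the normalization relevant when one uses this lemma to shrink a curve down to the origin — is $\tfrac13\ell_c^3\le\ell_c^3$. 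Combining the estimates yields $\len(\gamma)\le C\max(\ell_c^{3/2},\ell_c^{1/2})$ with $C=\tfrac{2}{3\sqrt3}\sqrt{a_0}+2\sqrt{a_1}$, depending only on $a_0,a_1$.

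The scaling identity and the two elementary integrals are routine; the one delicate point is the rigidity step in the ``finite length $\Rightarrow$ straight line'' direction. Because the metric is only assumed to satisfy $a_0,a_n>0$ — so $a_2$ may vanish — one cannot simply read off $b_2=0$ and conclude in one line; one only gets $b_n=0$ for free and must run the repeated-integration argument, whose crux is that a polynomial unit-tangent field on an interval must be constant. A secondary point is keeping the Sobolev bookkeeping straight so each integration step is valid, together with the mild position normalization used to bound the order-zero term by $\ell_c^{3/2}$.
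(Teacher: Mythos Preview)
Your proof is correct and follows essentially the same route as the paper's. The only cosmetic difference is that the paper first invokes reparametrization invariance to pass to a constant-speed curve (so that $\nabla_{\partial_s}^i$ becomes $\ell_c^{-i}\partial_\theta^i$), whereas you keep the arc-length operators $D_s$ and the arc-length coordinate $\sigma$ throughout; the rigidity step (a vector-valued polynomial of constant norm must be constant) and the final estimate via $\sqrt{x+y}\le\sqrt x+\sqrt y$ are identical, and both arguments tacitly use the through-the-origin normalization $c(0)=0$ to bound the order-zero term by $\ell_c^{3/2}$.
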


\begin{proof}
As we saw in the proof of \Cref{cor: dist inv}, the length of a path is invariant to the action of $\diff$, i.e. \[\len(\gamma)=\len(\gamma.\varphi)=\len\big(\gamma(t,\varphi(\theta))\big) \quad \forall\varphi\in\diff.\]
Thus, without loss of generality, we can assume $c$ is constant-speed.
In this case, $|\gamma'(t,\theta)|=|t\gamma'|\equiv t\ell_c $  is independent of $\theta$.
Since $\gamma_t\equiv c$ at any time $t$, for any $i=1,...,n$, we have
\begin{equation*}
    \begin{split}
       \nabla^i_{\partial_s}\gamma_t(\theta)&=\nabla^i_{\partial_s}c(\theta)=\frac{1}{t^i\ell_c^i}\partial^i_\theta c(\theta) \quad \forall\theta\in[0,1],\\
    \|\nabla^i_{\partial_s}\gamma_t\|_{L^2(ds)}^2&=\intop_0^1\langle\frac{\partial^i_\theta c}{t^i\ell_c^i},\frac{\partial^i_\theta c}{t^i\ell_c^i}\rangle|t\ell_c|d\theta=\frac{1}{t^{2i-1}}\frac{\|\partial^i_\theta c\|^2_{L^2(d\theta)}}{\ell_c^{2i-1}}.
    \end{split}
\end{equation*}
Hence, 
\[\len(\gamma)=\intop_0^1\sqrt{\sum_{0\le i\le n}a_i\|\nabla^i_{\partial_s}\gamma_t\|_{L^2(ds)}^2}dt=\intop_0^1\sqrt{\sum_{0\le i\le n} \frac{a_i\|\partial^i_\theta c\|^2_{L^2(d\theta)}}{\ell_c^{2i-1}}\frac{1}{t^{2i-1}}}dt.\]
Since the integral $\int_0^1\frac{1}{t^\alpha}dt$ diverges if and only if $\alpha\ge1$,  $\len(\gamma)$ is finite if and only if  $ \|\partial^i_\theta c\|^2_{L^2(d\theta)}=0$ for every $i\ge2$ such that $a_i\ne0$, and particularly for $i=n$.

If so, $\partial^n_\theta c\equiv0$ and, by integration, every coordinate of $\partial_\theta c=c'$ is a polynomial of degree at most $n-1$. This would imply 
\[|c'|^2=\sum_{j\le d}(c'_j)^2\] 
is itself a polynomial, and since $|c'|^2(\theta)\equiv\ell_c^2$, it must be constant.  We write every coordinate as a polynomial, i.e., $\partial_\theta c_j=\sum_{i\le n-1}c_j^i\theta^i,\ c_j^i\in\mathbb{R}$. In order to show each $\partial_\theta c_j$ is constant, we consider the term of degree $2(n-1)$ of $|c'|^2$. We have,
\[0=\sum_{j\le d}(c_j^{n-1}\theta^{n-1})^2,\]
which implies $c_j^{n-1}=0$ for all $j\le d$. Repeating the argument for terms of order $0<i<n-1$ ensures that $\partial_\theta c_j \equiv c_j^0$ for all $j\le d$. By integration, we conclude $c$ is a straight line.\par
On the other hand, if $c(\theta)\coloneqq v\theta$ is a straight line, any derivative of order higher than one vanishes, and we are left with 
\[\len(\gamma)=\intop_0^1\sqrt{a_0\|v\theta\|_{L^2(ds)}^2+a_1\|\frac{v}{t|v|}\|^2_{L^2(ds)}}dt.\]
Explicitly,  
\[\len(\gamma)=\intop_0^1\sqrt{a_0\frac{|v|^3t}{3}+a_1\frac{|v|}{t} }dt\le\intop_0^1\sqrt{a_0\frac{t|v|^3}{3}}+\sqrt{a_1\frac{|v|}{t}} dt,\]
and since $\ell_c=|v|$, we have
\[\len(\gamma)\le C\max(\ell_c^{1.5},\ell_c^{0.5}) \]
where the constant can be taken to be $C=2(\sqrt{a_0}+\sqrt{a_1})$.
\end{proof}
As a result, we can deduce the following
\begin{cor}
\label{cor: shrink dist}
    For any straight-line curve $c\in\Ind$, and any $0<\lambda<1$:
    \[d_G\big{(}\lambda c, c\big{)}<C\max(\ell_c^{1.5},\ell_c^{0.5})\]
    where $\ell_c$ is the length of the curve $c$, and $C$ is the constant from \Cref{lemma:shrink cost}, depending only on the coefficients of the metric $G$.
\end{cor}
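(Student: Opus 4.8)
The plan is to read the bound straight off \Cref{lemma:shrink cost} by truncating the shrinking path. Consider again the path $\gamma(t)=tc$, but now restricted to the closed sub-interval $[\lambda,1]$. Since $c$ is an immersion and $t\neq 0$ throughout this interval, every $\gamma(t)=tc$ lies in $\Ind$, so after the trivial affine rescaling of the time parameter onto $[0,1]$ (which leaves the length unchanged) the restricted path is an admissible element of $\Gamma(\lambda c,c)$. Hence, from the definition of the geodesic distance as an infimum over $\Gamma(\lambda c,c)$,
\[
d_G(\lambda c,c)\le \len\big(\gamma|_{[\lambda,1]}\big).
\]

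Next I would compare $\len(\gamma|_{[\lambda,1]})$ with the length of the full path $\gamma$ on $(0,1]$. In the proof of \Cref{lemma:shrink cost} the length integrand was shown to equal $\sqrt{\sum_{0\le i\le n} a_i\|\nabla^i_{\partial_s}\gamma_t\|^2_{L^2(ds)}}$, which (after reducing to the constant-speed representative $c=v\theta$, as done there) is $\sqrt{a_0\tfrac{|v|^3 t}{3}+a_1\tfrac{|v|}{t}}$; since $a_0>0$ and $v\neq 0$, this is strictly positive for every $t\in(0,1]$. Consequently the mass of the integral over $(0,\lambda)$ is strictly positive, so $\len(\gamma|_{[\lambda,1]})<\len(\gamma|_{(0,1]})$.

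Finally, \Cref{lemma:shrink cost} gives $\len(\gamma|_{(0,1]})\le C\max(\ell_c^{1.5},\ell_c^{0.5})$ with $C=2(\sqrt{a_0}+\sqrt{a_1})$ depending only on the coefficients of $G$. Chaining the three inequalities produces the claimed strict bound $d_G(\lambda c,c)<C\max(\ell_c^{1.5},\ell_c^{0.5})$. There is no real obstacle in this argument; the only points needing a word of care are the admissibility of the truncated path (so that it genuinely participates in the infimum defining $d_G$) and the strictness of the truncation, which is exactly what upgrades the $\le$ of \Cref{lemma:shrink cost} to the strict inequality in the statement.
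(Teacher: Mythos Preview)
Your proof is correct and follows essentially the same route as the paper: restrict the shrinking path $\gamma(t)=tc$ to $[\lambda,1]$ and bound its length by $\len(\gamma)\le C\max(\ell_c^{1.5},\ell_c^{0.5})$ via \Cref{lemma:shrink cost}. In fact you are slightly more careful than the paper, which writes only the chain of non-strict inequalities $d_G(\lambda c,c)\le\len(\gamma|_{[\lambda,1]})\le\len(\gamma)\le C\max(\ell_c^{1.5},\ell_c^{0.5})$; your observation that the integrand is strictly positive on $(0,\lambda)$ is exactly what upgrades this to the strict inequality in the statement.
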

\begin{proof}
    Define a path $\gamma$ as in \Cref{lemma:shrink cost}, and consider 
    \[\Tilde{\gamma}\coloneqq\gamma\big|_{[\lambda,1]}.\]
    By definition, $d_G\big(\lambda c,c\big)\le\len(\Tilde{\gamma})$, and by \Cref{lemma:shrink cost} we conclude that
    \[d_G(\lambda c,c)\le\len(\Tilde{\gamma})\le\len(\gamma)\le C\max(\ell_c^{1.5},\ell_c^{0.5}).\]
    \end{proof}
Next, we consider the distance between a curve and an affine translation of it:
\begin{lemma}
\label{lemma: translation cost}
For any curve $c\in\Ind$ and any vector $v_0\in\mathbb{R}^d$,
\begin{equation}
    d_G(c,c+v_0)\le |v_0|\cdot\sqrt{a_0\ell_c},
\end{equation}
where the curve $c+v_0$ is defined as $\big{(}c+v_0\big{)}(\theta)=c(\theta)+v_0$, and $a_0$ is the coefficient of the degree-zero term of the metric $G$.
\end{lemma}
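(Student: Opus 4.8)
The plan is to exhibit a single explicit competitor path in $\Gamma(c,c+v_0)$ and bound its length. The obvious candidate is the straight translation
\begin{equation*}
  \gamma:[0,1]\rightarrow\Ind,\qquad \gamma(t)\coloneqq c+tv_0,\qquad \text{i.e. }\gamma(t,\theta)=c(\theta)+tv_0.
\end{equation*}
First I would check this is an admissible path: since $\partial_\theta\gamma(t,\theta)=c'(\theta)\ne0$ for every $t$, each $\gamma(t)$ lies in $\Ind$, and $\gamma$ is smooth (hence piecewise $C^1$) in $t$ with $\gamma(0)=c$, $\gamma(1)=c+v_0$, so $\gamma\in\Gamma(c,c+v_0)$.

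Next I would compute the integrand of $\len(\gamma)$. The $t$-derivative is $\gamma_t\equiv v_0$, a \emph{constant} vector field along the curve; in particular $\partial_\theta\gamma_t\equiv0$, so $\nabla^i_{\partial_s}\gamma_t=0$ for all $i\ge1$, independently of the base curve $\gamma(t)$. Hence only the $a_0$-term survives:
\begin{equation*}
  G_{\gamma(t)}(\gamma_t,\gamma_t)=a_0\,\|v_0\|_{L^2(ds)}^2=a_0\intop_0^1|v_0|^2\,|\partial_\theta\gamma(t,\theta)|\,d\theta=a_0|v_0|^2\intop_0^1|c'|\,d\theta=a_0|v_0|^2\,\ell_c,
\end{equation*}
where I used that $\partial_\theta\gamma(t,\theta)=c'(\theta)$, so that $\ell_{\gamma(t)}=\ell_c$ for every $t$ (translation does not change the curve length). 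Integrating the square root over $t\in[0,1]$ gives $\len(\gamma)=|v_0|\sqrt{a_0\ell_c}$, and the bound $d_G(c,c+v_0)\le\len(\gamma)$ follows from the definition of the geodesic distance.

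There is essentially no obstacle here: the only points requiring a word of justification are that the translated path stays inside $\Ind$ (immediate from $(c+tv_0)'=c'$) and that $\ell_{\gamma(t)}$ is constant along the path (same reason), both of which are one-line observations. The conceptual content is simply that a rigid translation has velocity field equal to a constant vector, whose arc-length derivatives of all positive orders vanish, so the cost of translating is governed purely by the zeroth-order term $a_0$ and the (unchanged) curve length.
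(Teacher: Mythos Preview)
Your proof is correct and follows essentially the same approach as the paper: both use the linear interpolation path $\gamma(t)=c+tv_0$, observe that $\gamma_t\equiv v_0$ is constant in $\theta$ so all higher arc-length derivatives vanish, and compute $\len(\gamma)=|v_0|\sqrt{a_0\ell_c}$. Your version is, if anything, slightly more careful in spelling out admissibility of the path and the constancy of $\ell_{\gamma(t)}$.
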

\begin{proof}
Define the linear interpolation path
\begin{equation*}
    \begin{split}
        \gamma:[0,1]&\rightarrow \Ind\\
        t&\mapsto c+tv_0.
    \end{split}
\end{equation*}
Notice $\gamma_t\equiv v_0$ is independent of $\theta$, and so for any $i>0$ we have $\nabla^i_{\partial_s}\gamma_t\equiv0$. Therefore, the length of $\gamma$ simplifies to 
\[\intop_0^1\sqrt{a_0\|\gamma_t\|^2_{L^2(ds)}}dt=\intop_0^1
\sqrt{a_0\int_0^1|v_0|^2|c'|d\theta}dt=|v_0|\sqrt{a_0\ell_c}.\]
\end{proof}

% The cost of rotation
Lastly, we provide a bound on the distance between a curve $c$ and $Ac$, a rotation of it by some $A\in SO(n)$:
\begin{lemma}
\label{lemma: rot cost} There exists a constant $C$, dependent only on the dimension $d$, such that for any $c\in\Ind$ and any rotation matrix $A\in SO(d)$, 
\begin{equation}
    d_G(c,Ac)\le C\sqrt{G_c(c,c)},
\end{equation} 
where $\sqrt{G_c(c,c)}$ is the norm of $c\in T_c \Ind$ as a tangent vector at $c\in\Ind$.
\end{lemma}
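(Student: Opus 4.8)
The plan is to connect $c$ to $Ac$ by an explicit path, namely the one obtained by rotating $c$ pointwise along a short path in $SO(d)$, and to use the fact that the metric $G$ is invariant under the action of $SO(d)$ on curves. First I would record this invariance, by an argument parallel to \Cref{para.inv}: if $B\in SO(d)$ and $Bc$ denotes the curve $\theta\mapsto B(c(\theta))$, then $|(Bc)'|=|c'|$, so $\nabla^i_{\partial_{s(Bc)}}(Bh)=B\,\nabla^i_{\partial_{s(c)}}h$ and $ds(Bc)=ds(c)$; since $B$ preserves the Euclidean inner product this yields $G_{Bc}(Bh,Bk)=G_c(h,k)$ for all $h,k\in H^n\big((0,1),\mathbb{R}^d\big)$. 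In particular $\Ind$ is $SO(d)$-invariant (a rotation of an immersion is an immersion, as $B$ is invertible) and so is $G$.

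Next I would choose a $C^1$ path $A:[0,1]\to SO(d)$ with $A(0)=\mathrm{Id}$, $A(1)=A$, and $\int_0^1\|A'(t)\|_{\mathrm{op}}\,dt\le D_d$ for a constant $D_d<\infty$ depending only on $d$. Such a path exists because every element of $SO(d)$ lies in the image of the exponential map: writing $A=\exp(X)$ with $X$ skew-symmetric (conjugating $A$ to a block-diagonal form of planar rotations one may take $\|X\|_{\mathrm{op}}\le\pi$) and setting $A(t)=\exp(tX)$, one has $\|A'(t)\|_{\mathrm{op}}=\|X\exp(tX)\|_{\mathrm{op}}=\|X\|_{\mathrm{op}}\le\pi$ since $\exp(tX)\in SO(d)$, so in fact $D_d=\pi$ works for every $d$. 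Then I define
\[
\gamma:[0,1]\to\Ind,\qquad \gamma(t)(\theta):=A(t)\big(c(\theta)\big),
\]
which is a smooth (hence piecewise $C^1$) path in $\Ind$ joining $c$ to $Ac$: indeed $\gamma(t)\in H^n$ since $A(t)$ is linear, and $\big(\gamma(t)\big)'=A(t)c'\ne0$ because $A(t)$ is invertible, so $\gamma(t)$ is an immersion for each $t$.

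It then remains to estimate $\len(\gamma)$. Writing $\Omega(t):=A(t)^{-1}A'(t)$, which is skew-symmetric, we have $\partial_t\gamma(t)=A'(t)c=A(t)\big(\Omega(t)c\big)$, so by the $SO(d)$-invariance of $G$,
\[
G_{\gamma(t)}\big(\partial_t\gamma,\partial_t\gamma\big)=G_{A(t)c}\big(A(t)(\Omega(t)c),A(t)(\Omega(t)c)\big)=G_c\big(\Omega(t)c,\Omega(t)c\big).
\]
Since $\Omega(t)$ is a constant linear map it commutes with $\nabla_{\partial_s}$, so $\nabla^i_{\partial_s}(\Omega(t)c)=\Omega(t)\nabla^i_{\partial_s}c$ pointwise, whence $\|\nabla^i_{\partial_s}(\Omega(t)c)\|_{L^2(ds)}\le\|\Omega(t)\|_{\mathrm{op}}\|\nabla^i_{\partial_s}c\|_{L^2(ds)}$, and summing the weighted squares gives $G_c\big(\Omega(t)c,\Omega(t)c\big)\le\|\Omega(t)\|_{\mathrm{op}}^2\,G_c(c,c)$. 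As $A(t)$ is orthogonal, $\|\Omega(t)\|_{\mathrm{op}}=\|A'(t)\|_{\mathrm{op}}$, so
\[
\len(\gamma)=\intop_0^1\sqrt{G_{\gamma(t)}(\partial_t\gamma,\partial_t\gamma)}\,dt\le\sqrt{G_c(c,c)}\intop_0^1\|A'(t)\|_{\mathrm{op}}\,dt\le D_d\sqrt{G_c(c,c)},
\]
and $d_G(c,Ac)\le\len(\gamma)$ gives the claim with $C=D_d$. There is no essential obstacle here; the only points needing (routine) care are checking the rotation invariance of $G$ exactly as in \Cref{para.inv}, verifying that $\gamma$ stays inside $\Ind$, and supplying the bound $D_d$ on the length of a path in $SO(d)$ from the identity to $A$ — which, as noted, can even be taken independent of $d$.
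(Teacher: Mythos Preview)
Your proof is correct and follows essentially the same route as the paper: define $\gamma(t)=A(t)c$ for a short path $A(\cdot)$ in $SO(d)$ from the identity to $A$, use that rotations preserve $|c'|$ (hence commute with $\nabla_{\partial_s}$ and $ds$), and bound $\len(\gamma)$ by $\sqrt{G_c(c,c)}$ times the length of the path in $SO(d)$. The only notable difference is that you produce the path via the exponential map and obtain the explicit, dimension-independent constant $C=\pi$, whereas the paper invokes compactness of $SO(d)$ and equivalence of norms on $\mathbb{R}^{d^2}$ to get a $d$-dependent constant; your version is a bit sharper but not a different argument.
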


\begin{proof}
We begin by noting that $SO(d)$ is a connected compact sub-manifold of $\mathbb{R}^{d^2}\cong M_{d\times d}(\mathbb{R})$, which we consider with the intrinsic geodesic distance $d_{SO(d)}$ induced by the standard Riemannian metric. As a result, $\text{diam}\big{(}SO(d)\big{)}<\infty$ with respect to the induced Riemannian metric. By definition, $SO(d)$ contains the identity element $I_d$, hence we can choose a path $\mathcal{A}:[0,1]\rightarrow SO(d)$ such that $\mathcal{A}(0)=I_d, \mathcal{A}(1)=A\in SO(d)$ and $\len(\mathcal{A})=d_{SO(d)}\big(I_d,A\big)\le\text{diam}\big{(}SO(d)\big{)}$. \par
We define a path $\gamma:[0,1]\rightarrow\Ind$ by 
\[\gamma(t,\theta)=\mathcal{A}(t)\big(c(\theta)\big).\]
Notice $\gamma_t$ is simply $\mathcal{A}_t\big(c(\theta)\big)$, and since $\mathcal{A}_t$ is linear and independent of $\theta$, 
\[\partial_\theta\gamma_t=\partial_\theta\mathcal{A}_t\big(c(\theta)\big)=\mathcal{A}_t\big(\partial_\theta c(\theta)\big).\]
Since $\mathcal{A}(t)$ is a linear orthogonal map for every $t$, 
\[|\partial_\theta\gamma(t,\theta)|=|\mathcal{A}(t)\big{(}c'(\theta)\big{)}|=|c'(\theta)|.\]
Thus, we conclude 
\begin{equation}
\label{eq: rot equiv der}
\nabla_{\partial_s}\gamma_t(\theta)=\nabla_{\partial_s}\mathcal{A}_t\big(c(\theta)\big)=\mathcal{A}_t\big(\nabla_{\partial_s}c(\theta)\big) \quad \forall \theta\in[0,1],
\end{equation}
where $\nabla_{\partial_s}c$ in the rightmost expression is taken at $T_c\Ind$, as opposed to $\nabla_{\partial_s}\gamma_t$ taken at $T_{\mathcal{A}(t)c}\Ind$.\par
Using the same argument, we can conclude similarly to \eqref{eq: rot equiv der}, that $\nabla_{\partial_s}^i\gamma_t(\theta)=\mathcal{A}_t\big(\nabla_{\partial_s}^ic(\theta)\big)$ for every $0\le i\le n$. 
Thus, by the orthogonality of $\mathcal{A}(t)$, for any $0\le i\le n$ we have:
\begin{equation*}
    \begin{split}
    \|\nabla^i_{\partial_s}\gamma_t\|^2_{L^2(ds)}=\intop_0^1|\mathcal{A}_t(\nabla^i_{\partial_s}c)|^2|\mathcal{A}(t)(c')|d\theta\le \intop_0^1\|\mathcal{A}_t\|_{op}^2|\nabla^i_{\partial_s}c|^2|c'|d\theta\\
    =\|\mathcal{A}_t\|_{op}^2\intop_0^1|\nabla^i_{\partial_s}c|^2 |c'|d\theta=\|\mathcal{A}_t\|_{op}^2\|\nabla^i_{\partial_s}c\|^2_{L^2(ds),}
\end{split}
\end{equation*}
where $\|\nabla^i_{\partial_s}c\|^2_{L^2(ds)}$ is taken at the tangent space $T_c\Ind$.\par
By the equivalence of norms on $\mathbb{R}^{d^2}$, $\|\mathcal{A}_t\|^2_{op}\le C' \|\mathcal{A}_t\|_{SO(d)}^2$,  for some constant $C'$ dependent only on $d$.
Therefore, when considering the length of $\gamma$, we find 
\begin{equation*}
    \begin{split}
        \len(\gamma)=\intop_0^1\sqrt{\sum_{0\le i \le n}a_i\|\nabla^i_{\partial_s}\gamma_t\|^2_{L^2(ds)} }dt\le\intop_0^1\sqrt{\sum_{0\le i \le n}a_i\|\mathcal{A}_t\|_{op}^2\|\nabla^i_{\partial_s}c\|^2_{L^2(ds)}}dt \\
        =\sqrt{\sum_{0\le i \le n}a_i\|\nabla^i_{\partial_s}c\|^2_{L^2(ds)}}\intop_0^1\sqrt{\|\mathcal{A}_t\|^2_{op}}dt=\sqrt{G_c(c,c)}\intop_0^1\sqrt{\|\mathcal{A}_t\|^2_{op}}dt\\
        \le\sqrt{G_c(c,c)}\cdot C'\intop_0^1\sqrt{\|\mathcal{A}_t\|_{SO(d)}^2}dt.
    \end{split}
\end{equation*}
The integral $\intop_0^1\sqrt{\|\mathcal{A}_t\|^2_{SO(d)}}dt$ is precisely  the length of the path $\mathcal{A}$ in $SO(d)$, therefore 
\[\len(\gamma)\le C'\text{diam}\big{(}SO(d)\big{)}\sqrt{G_c(c,c)}\coloneqq C\sqrt{G_c(c,c)}.\]
\end{proof}

\subsubsection{The Implication for Vanishing-length Straight-line Sequences}

% Combining the three lemmas
The combination of the last three lemmas allows us to conclude the following: 
\begin{theorem}
\label{claim: straight-line Cauchy}
Let $n\ge2$, and let $(s_m)_{m=1}^\infty\subset\Ind$ be a sequence of uniformly-parametrized vanishing-length straight-line segment curves.
If $d>1$, $(s_m)_{m=1}^\infty$ is a divergent Cauchy sequence. If $d=1$, $(s_m)_{m=1}^\infty$ is a divergent Cauchy sequence if it lies within a single connected component of $\In$.
\end{theorem}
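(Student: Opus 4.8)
The plan is to establish the Cauchy property by building, for any two members $s_k,s_m$ of the sequence, an \emph{explicit} piecewise-$C^1$ path in $\Ind$ whose length is bounded by a quantity depending only on the curve lengths $\ell_{s_k},\ell_{s_m}$ and on the coefficients of $G$ --- in particular not on the (arbitrary) positions and directions of the segments --- and then to rule out convergence in $\Ind$ via the continuity of $c\mapsto\ell_c$.

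First I would normalise: by reparametrization invariance of $d_G$ (\Cref{cor: dist inv}) we may assume every $s_m$ is constant-speed, so that $s_m(\theta)=p_m+\ell_m A_m e_1\theta$ for a starting point $p_m\in\mathbb{R}^d$, a matrix $A_m\in SO(d)$, and $\ell_m:=\ell_{s_m}\to0$; when $d=1$ we have $A_m=1$ after orienting the common connected component so that $c'>0$ there. Fix a small $\varepsilon>0$ and the reference curve $\sigma^{\varepsilon}(\theta):=\varepsilon e_1\theta$. The core estimate is
\[
d_G(s_m,\sigma^{\varepsilon})\;\le\; C\max(\ell_m^{3/2},\ell_m^{1/2})\;+\;\tfrac{\varepsilon^{3/2}}{\ell_m}\,|p_m|\sqrt{a_0}\;+\;C'\sqrt{\tfrac{a_0}{3}\varepsilon^{3}+a_1\varepsilon},
\]
obtained by concatenating three moves: (i) linearly shrink $s_m$ to length $\varepsilon$, i.e.\ pass to $\lambda s_m$ with $\lambda=\varepsilon/\ell_m<1$, at cost $<C\max(\ell_m^{3/2},\ell_m^{1/2})$ by \Cref{cor: shrink dist} (which crucially does not see $p_m$ or $A_m$); (ii) translate the resulting short segment so that it starts at the origin, at cost $\le\tfrac{\varepsilon}{\ell_m}|p_m|\sqrt{a_0\varepsilon}$ by \Cref{lemma: translation cost} applied with $\ell_c=\varepsilon$; (iii) rotate it into direction $e_1$ through $SO(d)$ --- vacuous when $d=1$ --- at cost $\le C'\sqrt{G_c(c,c)}=C'\sqrt{a_0\varepsilon^{3}/3+a_1\varepsilon}$ by \Cref{lemma: rot cost} applied to the length-$\varepsilon$ straight segment $c$ from the origin (for which $\nabla_{\partial_s}^ic\equiv0$ for $i\ge2$). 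The last two summands tend to $0$ as $\varepsilon\to0$; hence $d_G(s_k,s_m)\le d_G(s_k,\sigma^{\varepsilon})+d_G(\sigma^{\varepsilon},s_m)$ together with $\varepsilon\to0$ gives
\[
d_G(s_k,s_m)\;\le\;C\bigl(\max(\ell_k^{3/2},\ell_k^{1/2})+\max(\ell_m^{3/2},\ell_m^{1/2})\bigr),
\]
and since $\ell_m\to0$ the right-hand side tends to $0$, so $(s_m)$ is Cauchy. When $d=1$ and the sequence is not confined to a single component, two of its members have derivatives of opposite sign, so $\Gamma(s_k,s_m)=\varnothing$ and $d_G(s_k,s_m)=+\infty$; the sequence then fails to be Cauchy, which is exactly why the component hypothesis is imposed.

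For divergence I would argue by contradiction: if $s_m\to s_\infty$ in $(\Ind,d_G)$, then the $s_m$ eventually lie in a metric ball about $s_\infty$ on which $c\mapsto\ell_c^{3/2}$ is Lipschitz (\Cref{prop: Lip cont.}), whence $\ell_{s_m}^{3/2}\to\ell_{s_\infty}^{3/2}$; but $\ell_{s_m}=\ell_m\to0$ forces $\ell_{s_\infty}=0$, contradicting that $s_\infty\in\Ind$ is an immersion of the compact interval $[0,1]$ and so has positive length.

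I expect the main obstacle to be precisely the arbitrariness of the positions $p_m$ and directions $A_m$: the naive path ``translate, rotate, then rescale at fixed length'' incurs a translation cost of order $|p_k-p_m|\sqrt{\ell}$, which need not be small. The remedy is to route through curves of vanishing length --- shrinking a segment all the way down costs only $O(\ell^{1/2})$ no matter where it sits, and once the curve is tiny its translations and rotations become arbitrarily cheap (costs of order $\varepsilon^{3/2}$ and $\varepsilon^{1/2}$). The one point to handle with care is to keep $\varepsilon$ free throughout the triangle-inequality chain and send it to $0$ only at the end; one should also record that in $\In$ the two connected components are exactly $\{c'>0\}$ and $\{c'<0\}$ (each convex, hence connected), which is what makes the $d=1$ component hypothesis both necessary and enough for the whole construction to stay inside $\In$.
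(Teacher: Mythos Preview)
Your argument is correct and follows the same shrink--translate--rotate scheme as the paper, using the same three lemmas (\Cref{lemma:shrink cost}, \Cref{lemma: translation cost}, \Cref{lemma: rot cost}) and the same key observation that routing through arbitrarily short curves makes translations and rotations arbitrarily cheap. The only organisational difference is that the paper fixes a short reference curve $c$ and then chooses the shrinking factor $\lambda_m$ to depend on $|v_m|$ so that the translation cost is uniformly $\le\varepsilon/6$, whereas you keep the intermediate length $\varepsilon$ as a free parameter in the bound for fixed $k,m$ and send $\varepsilon\to0$ at the end; your version yields the slightly cleaner explicit estimate $d_G(s_k,s_m)\le C(\ell_k^{1/2}+\ell_m^{1/2})$ without having to track $|v_m|$.
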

\begin{remark*}
    Any straight-line segment curve $s\in\Ind$ can be written uniquely as 
    \[s(\theta)=v+r\ell\varphi(\theta), \quad v\in\mathbb{R}^d, r\in S^{d-1}, \ell\in\mathbb{R}_+, \varphi\in\diffp.\]
    For $d=1$, under this identification, the two connected components of $\In$ are determined by $\text{sgn}(r)$ where $r\in S^0=\{-1,1\}$, as discussed in \Cref{subsec:one dimensional resluts} below.
    
\end{remark*}

\begin{proof}
Using the identification above, we write
\[ s_m(\theta)=v_m+r_m\ell_m\varphi(\theta),\] 
where $\ell_m\searrow0$ is the length, $r_m\in S^{d-1}$ is the direction, and $v_m\in\mathbb{R}^d$ is the starting point of the straight-line segment curve $s_m$. By \Cref{prop: Lip cont.}, $c\mapsto\ell_c$ is continuous, by which we conclude that $(s_m)_{m=1}^\infty$ is non-converging since $\ell_m\rightarrow0$ yet any curve $c\in\Ind$ has $\ell_c>0$. To prove it is Cauchy, it is enough to show that, for any $\varepsilon>0$, there exists some $N\in\mathbb{N}$ and a curve $c=c(\varepsilon)\in\Ind$ such that $s_m\in B_\varepsilon(c)$ for all $n>N$.\par
Given $\varepsilon>0$, define  the curve \[c(\theta)\coloneqq\min\left\{1,\frac{\varepsilon^2}{(4\sqrt{a_0}+4\sqrt{a_1})^2}\right\}\cdot r_0\varphi(\theta),\]  where $a_0,a_1$ are the metric's coefficients as in \eqref{eq: type of metric} and $r_0\in S^{d-1}$ is either the standard basis vector $e_1$ when $d>1$, or ,if $d=1$, $r_0=\pm1$ accordingly to the connected component in which $(s_m)_{m=1}^\infty$ lies in. By \Cref{lemma:shrink cost}, we conclude that $d_G(c,\lambda c)<\frac{\varepsilon}{2}$ for any $0<\lambda\le1$.\par
Next, we choose $N\in\mathbb{N}$ large enough such that $\ell_m<\frac{\ell_c}{9}$ for all $m>N$. Again by \Cref{lemma:shrink cost}, $d_G(s_m,\lambda s_m)<\frac{\varepsilon}{6}$ for any $0<\lambda\le1$.  For each $m>N$, define
\[\lambda_m\coloneqq\min\left\{1, \frac{\varepsilon^2}{6^2(a_0+a_1)C^2}, \frac{\varepsilon^2}{6^2|v_m|^2a_0}\right\}\] where  $C$ is the constant from \Cref{lemma: rot cost}. \par
Notice $\lambda_m\le1$ and particularly $d_G(s_m,\lambda_m s_m)<\frac{\varepsilon}{6}$. In addition, the curve length of $\lambda_m s_m$ is at most $\frac{\varepsilon^2}{6^2|v_m|^2a_0}$, thus, \Cref{lemma: translation cost} implies \[d_G\big(\lambda_m s_m, \lambda_m s_m - \lambda_mv_m\big)<\frac{\varepsilon}{6}.\] 
Next, since $\big(\lambda_ms_m -\lambda_mv_m\big) (\theta)= \lambda_m r_m \ell_m \varphi(\theta)$ we conclude by \Cref{lemma: rot cost} 
\[d_G\left( \lambda_m r_m \ell_m \varphi,  \lambda_m r_0 \ell_m \varphi\right) \le C \sqrt{G_{\lambda_m r_m \ell_m \varphi}\left(\lambda_m r_m \ell_m \varphi,\lambda_m r_m \ell_m \varphi\right)}.\]
$\lambda_m r_m \ell_m \varphi$ is a straight-line curve, and by  reparametrization invariance we mat assume $\varphi=id_{[0,1]}$. Thus, the terms of order $i\ge2$ of $G_{\lambda_m r_m \ell_m \varphi}\big(\lambda_m r_m \ell_m \varphi,\lambda_m r_m \ell_m \varphi\big)$ vanish, and
\begin{equation*}
\begin{split}
    d_G\left( \lambda_m r_m \ell_m \varphi,  \lambda_m r_0 \ell_m \varphi\right) \le& C \sqrt{\sum_{0\le i\le n}a_i\|\nabla^i_{\partial_s}\lambda_m r_m \ell_m \varphi\|_{L^2(ds)}^2} \\
    =&C\sqrt{a_0\frac{(\lambda_m\ell_m)^3}{3}+a_1\lambda_m\ell_m} \le C\sqrt{(a_0+a_1)\lambda_m},
    \end{split}
\end{equation*}
which by choice of $\lambda_m$ implies that $d_G\left( \lambda_m r_m \ell_m \varphi,  \lambda_m r_0 \ell_m \varphi\right) \le \frac{\varepsilon}{6}$. Overall we conclude by triangle inequality that, for any $m>N$,  
\[d_G(c,s_m)\le d_G(c,\lambda_m r_0 \ell_m \varphi)+d_G(\lambda_m r_0 \ell_m \varphi,s_m)<\frac{\varepsilon}{2}+3\frac{\varepsilon}{6}=\varepsilon.\]
\end{proof}

 The proof of \Cref{claim: straight-line Cauchy} implies that the limit point of a diverging straight-line Cauchy sequence may only depend on the parametrizations of the curves, as detailed below:
\begin{cor}
\label{cor: rotation-translation invariance}
    The limit point in $\overline{\Ind}$ of any diverging straight-line Cauchy sequence
    \[ s_m=v_m+r_m\ell_m\varphi_m\in\Ind\] 
    is independent of the choice of $\big{(}r_m\big{)}_{m=1}^\infty\subset S^{d-1}$,$\big{(}v_m\big{)}_{m=1}^\infty\subset\mathbb{R}^d$, and $\big{(}\ell_m\big{)}_{m=1}^\infty\subset\mathbb{R}_+$ such that $\ell_m\searrow0$.
\end{cor}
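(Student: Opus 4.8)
The plan is to reduce the assertion to a fact already built into the proof of \Cref{claim: straight-line Cauchy}, namely that the curve used there to trap the tail of the sequence depends on the parametrization but \emph{not} on the starting points, directions, or lengths. First I would recall the standard description of the metric completion: a point of $\overline{\Ind}$ is an equivalence class of $d_G$-Cauchy sequences in $\Ind$, where $(c_m)\sim(c_m')$ exactly when $\lim_{m\to\infty}d_G(c_m,c_m')=0$ (this limit exists because both sequences are Cauchy, so $d_G(c_m,c_m')$ is a Cauchy sequence of reals), and two Cauchy sequences represent the same point of $\overline{\Ind}$ precisely under this condition. Hence the corollary amounts to the following: if $(s_m)$ and $(\tilde s_m)$ are two diverging straight-line Cauchy sequences with the same parametrization $\varphi$ — and, when $d=1$, lying in the same connected component of $\In$ — then $d_G(s_m,\tilde s_m)\to 0$.

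Second, I would extract from the proof of \Cref{claim: straight-line Cauchy} the following uniform statement: for every $\varepsilon>0$ the curve $c(\varepsilon)$ constructed there, namely $c(\varepsilon)(\theta)=\min\{1,\varepsilon^2/(4\sqrt{a_0}+4\sqrt{a_1})^2\}\,r_0\,\varphi(\theta)$, depends only on $\varepsilon$, on the coefficients $a_0,a_1$, on the common parametrization $\varphi$, and on $r_0$ (the vector $e_1$ when $d>1$, the component-determining sign when $d=1$), and in particular not on $v_m$, $r_m$ or $\ell_m$; moreover the same argument yields an index $N$ with $d_G(c(\varepsilon),s_m)<\varepsilon$ for all $m>N$, and, applied verbatim to $(\tilde s_m)$ with the \emph{same} curve $c(\varepsilon)$, an index $\tilde N$ with $d_G(c(\varepsilon),\tilde s_m)<\varepsilon$ for all $m>\tilde N$. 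The triangle inequality then gives $d_G(s_m,\tilde s_m)<2\varepsilon$ for all $m>\max(N,\tilde N)$, and since $\varepsilon$ is arbitrary, $d_G(s_m,\tilde s_m)\to 0$. Together with the first step this shows $(s_m)$ and $(\tilde s_m)$ determine the same point of $\overline{\Ind}$; letting $(\tilde s_m)$ range over all admissible choices of $(r_m)$, $(v_m)$, $(\ell_m)$ with $\ell_m\searrow0$ gives the claimed independence.

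I do not anticipate a genuine obstacle: the whole content is the observation that the approximating curves in the proof of \Cref{claim: straight-line Cauchy} were already chosen uniformly in the data $(v_m,r_m,\ell_m)$, so a single curve simultaneously $\varepsilon$-approximates every competing sequence with the prescribed parametrization. The only point deserving care is the case $d=1$, where two sequences with $\mathrm{sgn}(r_m)$ not eventually constant lie in different connected components of $\In$ and hence at infinite $d_G$-distance; there, exactly as in \Cref{claim: straight-line Cauchy}, the statement must be read within a single connected component.
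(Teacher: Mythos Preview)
Your argument for the uniformly parametrized case ($\varphi_m\equiv\varphi$) is correct and morally the same as the paper's: the paper interleaves the two competing sequences and applies \Cref{claim: straight-line Cauchy} to the interleaved sequence as a black box, whereas you reopen that proof and observe that the approximating curve $c(\varepsilon)$ was chosen independently of $(v_m,r_m,\ell_m)$. Both routes express the same content; the interleaving trick is slightly cleaner in that it does not require inspecting the internals of \Cref{claim: straight-line Cauchy}.

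There is, however, a gap. The statement allows the parametrizations $\varphi_m$ to vary with $m$, and your restatement ``the corollary amounts to the following: if $(s_m)$ and $(\tilde s_m)$ are two diverging straight-line Cauchy sequences with the same parametrization $\varphi$\ldots'' silently replaces the sequence $(\varphi_m)$ by a single fixed $\varphi$. The curve $c(\varepsilon)$ you extract from the proof of \Cref{claim: straight-line Cauchy} depends on this fixed $\varphi$, so your argument as written does not cover two sequences sharing a \emph{varying} parametrization sequence $(\varphi_m)$. The paper handles this in one line: by reparametrization invariance (\Cref{cor: dist inv}),
\[
d_G\big(v_m+r_m\ell_m\varphi_m,\; v_m'+r_m'\ell_m'\varphi_m\big)=d_G\big(v_m+r_m\ell_m\varphi,\; v_m'+r_m'\ell_m'\varphi\big)
\]
for any fixed $\varphi\in\diff$, which reduces the general case to the uniformly parametrized one you already treated. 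Once you insert this reduction, your proof is complete.
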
 
\begin{proof}
    We begin by considering the case of a uniformly parametrized sequence, i.e.,  $\varphi_m=\varphi\in\diffp$ for all $m\in\mathbb{N}$. For any two sequences 
    \[s_m'\coloneqq v_m'+r_m'\ell_m'\varphi,\quad s_m\coloneqq v_m+r_m\ell_m\varphi \qquad \forall m\in\mathbb{N},\]
    the sequence
    \[t_m\coloneqq
        \begin{cases}
            s_{\frac{m}{2}}',& m\equiv0\mod2\\
            s_{\frac{m+1}{2}},& m\equiv1\mod2
        \end{cases}\]
     is also Cauchy by \Cref{claim: straight-line Cauchy}. In particular,
    \[d_G\big{(}s_m',s_m\big{)}=d_G\big{(}t_{2m-1},t_{2m}\big{)}\underset{m\rightarrow\infty}{\longrightarrow}0,\]
    which implies $\big{(}s_m'\big{)}_{m=1}^\infty$ and $\big{(}s_m\big{)}_{m=1}^\infty$ share the same limit point in the metric completion of $ \Ind$.\par
    For the general case  
    \[s_m'\coloneqq v_m'+r_m'\ell_m'\varphi_m,\quad s_m\coloneqq v_m+r_m\ell_m\varphi_m \qquad \forall m\in\mathbb{N},\] 
    by the reparametrization invariance, we conclude that 
    \[d_G\big(s_m,s_m')=d_G\big(v_m+r_m\ell_m\varphi,v_m'+r_m'\ell_m'\varphi\big)\underset{m\rightarrow\infty}{\longrightarrow}0,\]
    where $\varphi\in\diff$ is any diffeomorphism.
\end{proof}

If \Cref{conj: Thesis question} were to hold, it would imply that any pair of vanishing length straight-line Cauchy sequences converge to the same limit point in the completion space. The question of whether this is the case is equivalent, by \Cref{cor: rotation-translation invariance} and the reparametrization invariance of $d_G$, to the question whether two sequences of the form $e_1\ell_m\varphi(\theta)$, and $e_1\ell_m\psi(\theta)$ for $\varphi\ne\psi\in\diffp$ converge to the same limit point in the metric completion space. This question is examined in one-dimensional settings $\In$ in the following section.

\subsection{ The One-Dimensional Case}
\label{subsec:one dimensional resluts}
% I^n([0,1],R) - \In
% diff[0,1]- \diff
%  close(I^n([0,1],R))- \overline{\In}

  In this section, we focus on the space $\In$, as a way to isolate the properties of straight line sequences. When considering a curve $c\in\In$, since $c'(\theta)\ne0$ for any $\theta\in[0,1]$, $c$ is either strictly monotone increasing or decreasing. This allows for a useful characterization of $\In$ we will use throughout this section:
\begin{prop}
    Any curve $c\in\In$ can be described as 
    \[c=v+\ell\varphi\] 
    meaning $c(\theta)=v+\ell\varphi(\theta)$, where $v\in\mathbb{R}$, $\ell\in\mathbb{R}_+$ and $\varphi\in\diff$.
\end{prop}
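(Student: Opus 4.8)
The plan is to normalize a monotone $H^n$ function. First I would observe that since $n\ge2$, any $c\in\In$ has a representative in $C^1\big([0,1]\big)$ (by \Cref{prop:abs cont}), so $c'$ is continuous and nowhere vanishing, and therefore has constant sign on $[0,1]$ by the intermediate value theorem. Hence $c$ is either strictly increasing or strictly decreasing, and I would treat these two cases separately.

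If $c$ is increasing, set $v\coloneqq c(0)$, $\ell\coloneqq c(1)-c(0)>0$, and $\varphi\coloneqq\ell^{-1}(c-v)$, so that $c=v+\ell\varphi$ holds by construction. Checking $\varphi\in\diff$ is then routine: $\varphi$ is an affine function of $c$, hence lies in $H^n\big((0,1),\mathbb{R}\big)$; it satisfies $\varphi(0)=0$, $\varphi(1)=1$ and is strictly increasing, so it is a bijection of $[0,1]$ onto itself; and $\varphi'=\ell^{-1}c'$ is continuous and nonvanishing, so $\varphi$ is a $C^1$-homeomorphism, i.e.\ $\varphi\in\mathcal{D}^1\big([0,1]\big)$. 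Thus $\varphi\in\diffp\subset\diff$. If $c$ is decreasing, the one subtlety is keeping $\ell$ positive; this is resolved by letting $\varphi$ be endpoint-switching. I would set $\ell\coloneqq c(0)-c(1)>0$, $v\coloneqq c(1)$, and $\varphi\coloneqq\ell^{-1}(c-c(1))$; then again $c=v+\ell\varphi$, while $\varphi(0)=1$, $\varphi(1)=0$, $\varphi$ is strictly decreasing, and the identical regularity argument gives $\varphi\in\diffm\subset\diff$.

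There is no genuine obstacle here: the statement is just a change of affine coordinates on the target $\mathbb{R}$ turning $c$ into a reparametrization of the interval, and the hypothesis $n\ge2$ enters only to guarantee continuity of $c'$. The single point worth flagging is the decreasing case, where a naive choice of sign would force a negative scaling factor; using an endpoint-switching diffeomorphism instead is precisely why the statement is phrased with $\diff$ rather than $\diffp$. If uniqueness of the triple $(v,\ell,\varphi)$ is desired, it follows by evaluating $c=v+\ell\varphi$ at $\theta=0$ and $\theta=1$ together with the sign of $c'$; but as stated only existence is claimed.
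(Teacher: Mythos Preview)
Your proof is correct and matches the paper's approach: the paper simply observes, just before stating the proposition, that $c'$ is nonvanishing and hence $c$ is strictly monotone, and then states the proposition without further proof. Your write-up supplies exactly the affine normalization the paper leaves implicit, including the handling of the decreasing case via an endpoint-switching diffeomorphism.
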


By \cite[Theorem~6.3]{Bauer2020}, any diverging Cauchy sequence in $\Ind$, for any $d\ge1$,  must be of vanishing length. This implies, in the case of $\In$:
\begin{prop}
\label{prop: div cauchy seq}
    In  $\In$, any diverging Cauchy sequence $(c_m)_{m=1}^\infty$ can be written as
    \[c_m=v_m+\ell_m\varphi_m\] where $\mathbb{R}_+\supset\ell_m\rightarrow0$,  $v_m\in\mathbb{R}$, and $\varphi_m\in\diff$ for all $m\in\mathbb{N}$.
\end{prop}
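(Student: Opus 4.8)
The plan is to obtain the statement directly from two facts already in hand: the structural description of curves in $\In$ given in the preceding proposition, and the vanishing-length property of non-convergent Cauchy sequences recalled in the introduction, \cite[Theorem~6.3]{Bauer2020}.

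First, I would apply the preceding proposition termwise: for each $m$, since $c_m\in\In$ we may write $c_m = v_m + \ell_m\varphi_m$ with $v_m\in\mathbb{R}$, $\ell_m\in\mathbb{R}_+$ and $\varphi_m\in\diff$. Concretely, $c_m$ is strictly monotone, so one takes $v_m = c_m(0)$, $\ell_m = c_m(1)-c_m(0)$ and $\varphi_m = (c_m-v_m)/\ell_m\in\diffp$ when $c_m$ is increasing, and $v_m = c_m(1)$, $\ell_m = c_m(0)-c_m(1)$ and $\varphi_m = (c_m-v_m)/\ell_m\in\diffm$ when $c_m$ is decreasing; in either case $\ell_m = |c_m(1)-c_m(0)| > 0$.

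Second, I would observe that this $\ell_m$ is precisely the curve length $\ell_{c_m}$. Indeed $c_m' = \ell_m\varphi_m'$ with $\varphi_m$ a monotone self-homeomorphism of $[0,1]$, so $\int_0^1|\varphi_m'|\,d\theta = |\varphi_m(1)-\varphi_m(0)| = 1$, whence $\ell_{c_m} = \int_0^1|c_m'|\,d\theta = \ell_m$.

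Third, I would invoke \cite[Theorem~6.3]{Bauer2020} in the case $d=1$: since $(c_m)_{m=1}^\infty$ is a Cauchy sequence that does not converge in $\In$, it has vanishing curve length, $\int_0^1|c_m'|\,d\theta\to 0$, i.e.\ $\ell_m = \ell_{c_m}\to 0$, which is exactly the claimed decomposition. No genuine obstacle is expected here: the proposition is a bookkeeping consequence of two earlier results. The only points requiring a line of care are that the decomposition must use an endpoint-switching $\varphi_m$ when $c_m$ is orientation-reversing (so one should refer to $\diff$ rather than $\diffp$), and that the scalar $\ell_m$ in $c = v + \ell\varphi$ genuinely coincides with the curve length, which rests on the monotonicity of one-dimensional immersions.
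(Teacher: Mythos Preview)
Your proposal is correct and follows exactly the route the paper takes: the proposition is stated there as an immediate consequence of the preceding structural description of $\In$ together with \cite[Theorem~6.3]{Bauer2020}, and you have simply spelled out the two-line verification (including the identification $\ell_m=\ell_{c_m}$) that the paper leaves implicit.
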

Notice that the direction of a curve $c\in\In$, it being increasing or decreasing, is embodied in the choice of the diffeomorphism $\varphi$ with it being either endpoint-fixing or endpoint-switching, respectively, as defined in \Cref{def: diffeomorphism}. \par

For a pair of curves $c_1=v_1+\ell_1\varphi_1$ and $c_2=v_2+\ell_2\varphi_2$ such that $\varphi_1\in\diffp$ and $\varphi_2\in\diffm$, it is easy to notice that there is no path of immersion connecting them. This is due to the fact that any such path $c:[0,1]\rightarrow\In$ must have, for every $\theta\in[0,1]$, some $t\in(0,1)$ such that $\partial_\theta c(t,\theta)=0$. Thus, at least for some $t\in[0,1]$, the curve $c(t)$ is not an immersion.\par
This means, by the definition of the geodesic metric, that the space $\In$ consists of two distinct connected components:
\[\bigg\{c=v+\ell\varphi \ \bigg| \ v\in\mathbb{R},\ell\in\mathbb{R}_+,\varphi\in\diffp \bigg\} \] and 
\[\bigg\{c=v+\ell\varphi \ \bigg| \ v\in\mathbb{R},\ell\in\mathbb{R}_+,\varphi\in\diffm \bigg\},\]

This is unlike the case of $d>1$, where the space $\Ind$ is connected as shown by \Cref{cor: single con com} below.\par
Naturally, we will restrict our analysis to a single connected component, addressing only endpoint-fixing diffeomorphisms.
It can be verified, by considering the isometry \[v+\ell\varphi(\theta) \mapsto v+\ell\varphi(1-\theta),\] 
that the results are symmetric for the other connected component. \par
By \Cref{claim: straight-line Cauchy}, we conclude that any sequence of the form 
\[ c_m=v_m+\ell_m\varphi,\qquad  \ell_m\searrow0,\ v_m\in\mathbb{R},\ \varphi\in\diff\]
is a diverging Cauchy sequence. Furthermore, \Cref{cor: rotation-translation invariance} ensures that the limit point in the completion space is independent of the choice of the sequences $(v_m)_{m=1}^\infty,(\ell_m)_{m=1}^\infty$. Thus, we may assume that $v_m=0$ for all $m\in\mathbb{N}$ and consider some fixed sequence $\ell_m\rightarrow0$ throughout the analysis.
\begin{remark*}
    When restricting ourselves to the case of $d=1$, the rotation invariance of \Cref{claim: straight-line Cauchy} becomes trivial since $SO(1)=\{1\}$.
\end{remark*}

\subsubsection{Distinct Limit Points in the Metric Completion}

We now turn to the question that motivated this one-dimensional viewpoint, is it possible, for two distinct diffeomorphisms $\varphi,\psi\in\diff$, and a sequence $\ell_m\searrow0$, that the sequences $\big{(}\ell_m\varphi\big{)}_{m=1}^\infty$ and $\big{(}\ell_m\psi\big{)}_{m=1}^\infty$ converge to the same limit point in the metric completion of $\In$? In other words, is it possible that
\[d_G\big{(}\ell_m\varphi,\ell_m\psi\big{)}\underset{m\rightarrow\infty}{\longrightarrow}0\, ?\]

We begin by defining:
\begin{definition}
    \label{def: delta phipsi}
    For any pair of diffeomorphisms $\varphi,\psi\in\diffp$, define 
    \begin{equation}
        \label{eq: delta def} \Delta(\varphi,\psi)\coloneqq\max_{\theta_0,\theta_1\in[0,1]}\bigg|\ln
        \Big(\frac{\varphi'(\theta_1)}{\psi'(\theta_1)}\Big) -\ln\Big(\frac{\varphi'(\theta_0)}{\psi'(\theta_0)}\Big)\bigg|.
    \end{equation}
\end{definition}
Notice that by the mean value theorem, for any pair  $\varphi,\psi\in\diffp$, there exists a point $\theta\in[0,1]$  such that $\varphi'(\theta)=\psi'(\theta)$. By fixing $\theta_0$ to be as such, and maximizing over $\theta_1\in[0,1]$ we conclude:
\begin{prop}
    \label{prop: delta positive}
    For any pair of distinct diffeomorphisms $\varphi\ne\psi\in\diffp$, \[\Delta(\varphi,\psi)>0.\]
\end{prop}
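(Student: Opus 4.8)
The plan is to argue directly from the definition of $\Delta(\varphi,\psi)$ in \eqref{eq: delta def}, using the hint already supplied in the sentence preceding the proposition. Observe that the integrand to be maximized,
\[
F(\theta_0,\theta_1)\coloneqq\Bigl|\ln\frac{\varphi'(\theta_1)}{\psi'(\theta_1)}-\ln\frac{\varphi'(\theta_0)}{\psi'(\theta_0)}\Bigr|,
\]
is well defined and continuous on $[0,1]^2$: indeed, since $\varphi,\psi\in\diffp\subset\mathcal{D}^1([0,1])$ are $C^1$ with nowhere-vanishing (hence everywhere positive) derivatives on the compact interval, the ratio $\varphi'/\psi'$ is continuous and bounded away from $0$ and $\infty$, so $\ln(\varphi'/\psi')$ is a well-defined continuous function on $[0,1]$. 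Thus the maximum defining $\Delta(\varphi,\psi)$ is attained, and in particular $\Delta(\varphi,\psi)\ge 0$ always; it only remains to rule out equality when $\varphi\ne\psi$.

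First I would record the mean value theorem observation precisely: since $\varphi,\psi\in\diffp$ both fix the endpoints, $\int_0^1(\varphi'-\psi')\,d\theta=\varphi(1)-\psi(1)-(\varphi(0)-\psi(0))=0$, so $\varphi'-\psi'$ cannot be of one sign throughout $(0,1)$ unless it vanishes identically; by the intermediate value theorem applied to the continuous function $\varphi'-\psi'$ there is some $\theta_0\in[0,1]$ with $\varphi'(\theta_0)=\psi'(\theta_0)$, i.e.\ $\ln(\varphi'(\theta_0)/\psi'(\theta_0))=0$. Fixing this $\theta_0$ in the definition gives
\[
\Delta(\varphi,\psi)\ \ge\ \max_{\theta_1\in[0,1]}\Bigl|\ln\frac{\varphi'(\theta_1)}{\psi'(\theta_1)}\Bigr|.
\]

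Next I would show the right-hand side is strictly positive. Suppose for contradiction it is zero; then $\ln(\varphi'(\theta_1)/\psi'(\theta_1))=0$, hence $\varphi'(\theta_1)=\psi'(\theta_1)$, for every $\theta_1\in[0,1]$. Integrating from $0$ to $\theta$ and using $\varphi(0)=\psi(0)=0$ yields $\varphi(\theta)=\psi(\theta)$ for all $\theta\in[0,1]$, contradicting $\varphi\ne\psi$. Therefore $\Delta(\varphi,\psi)\ge\max_{\theta_1}|\ln(\varphi'(\theta_1)/\psi'(\theta_1))|>0$, which is the claim.

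There is essentially no serious obstacle here — this is a short positivity/compactness argument — but the one point worth stating carefully is the well-definedness and continuity of $\ln(\varphi'/\psi')$ on the closed interval, which relies on the fact that elements of $\diffp$ are $C^1$ (from $\mathcal{D}^1([0,1])$) with strictly positive derivative on all of $[0,1]$ including the endpoints; without that, $\varphi'$ or $\psi'$ could degenerate at an endpoint and the logarithm would blow up. I would therefore make that regularity input explicit at the start of the proof, then present the two displayed inequalities above in sequence.
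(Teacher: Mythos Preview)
Your proposal is correct and follows exactly the approach sketched in the paper: the paper's argument is the single sentence preceding the proposition, observing that by the mean value theorem there is some $\theta_0$ with $\varphi'(\theta_0)=\psi'(\theta_0)$, and then maximizing over $\theta_1$ yields a positive quantity since $\varphi\ne\psi$. You have simply spelled out the implicit steps (well-definedness of the logarithm, the contradiction showing the maximum is strictly positive) more carefully than the paper does.
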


The quantity $\Delta(\varphi,\psi)$ provides the following estimate for the length of paths in $\Gamma(\lambda\psi,\lambda\varphi)$:

\begin{lemma}
\label{lemma: dist-len ineq.}
     Let $n\ge2$, and let $G$ be a Riemannian metric as in \eqref{eq: type of metric}  on $\In$ such that $a_2>0$. Then for every pair $\psi\ne\varphi\in\diffp$ of distinct diffeomorphisms, and every $\gamma\in\Gamma\left(\lambda\varphi,\lambda\psi\right)$, we have  
    \begin{equation}
    \label{eq: delta bound}
        \len(\gamma)\ge\Delta(\varphi,\psi)\sqrt{\frac{a_2}{\max_{t\in[0,1]}\ell_{\gamma(t)}}}.
    \end{equation}
\end{lemma}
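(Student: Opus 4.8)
The plan is to track, along an arbitrary path $\gamma\in\Gamma(\lambda\varphi,\lambda\psi)$, the evolution of the quantity $\ln\bigl(\partial_\theta\gamma(t,\theta)\bigr)$ (well-defined up to additive constants since $\gamma(t)$ is an immersion of a one-dimensional curve, hence $\partial_\theta\gamma(t,\theta)$ has a constant sign), and to show that the $a_2$-term of the metric controls the \emph{oscillation in $\theta$} of this logarithm. Concretely, for each fixed $t$ write $u(t,\theta)\coloneqq\ln|\partial_\theta\gamma(t,\theta)|$. Then $\partial_t u=\partial_t\partial_\theta\gamma/\partial_\theta\gamma$, and a direct computation relates $\partial_\theta(\partial_t u)$ to $\nabla_{\partial_s}\gamma_t$ and $\nabla^2_{\partial_s}\gamma_t$; the point is that the $\theta$-derivative of $\partial_t u$ is expressible through arc-length derivatives of the velocity field $\gamma_t$, so its $L^2(ds)$ norm is bounded by $\sqrt{G_{\gamma(t)}(\gamma_t,\gamma_t)}/\sqrt{a_2}$ up to the length factor.

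The key steps, in order, are: (1) Fix a path $\gamma$ and, using reparametrization invariance, reduce to whatever normalization is convenient; define $u(t,\theta)=\ln\partial_\theta\gamma(t,\theta)$ and verify it is $C^1$ in $t$ for a.e.\ $\theta$ (or work with difference quotients / piecewise-$C^1$ structure). (2) Compute $\partial_\theta\partial_t u = \partial_\theta\!\bigl(\tfrac{\partial_\theta\gamma_t}{\partial_\theta\gamma}\bigr)$ and rewrite it in terms of $\nabla_{\partial_s}\gamma_t$ and $\nabla^2_{\partial_s}\gamma_t$ together with the speed $|\partial_\theta\gamma|$; the upshot should be an identity of the form $\partial_\theta\partial_t u = |\partial_\theta\gamma|\cdot\nabla^2_{\partial_s}\gamma_t$ (the first-order normal terms drop because $d=1$), so that
\[
\Bigl|\partial_t\bigl(u(t,\theta_1)-u(t,\theta_0)\bigr)\Bigr|
=\Bigl|\int_{\theta_0}^{\theta_1}\partial_\theta\partial_t u\,d\theta\Bigr|
\le \int_0^1 |\nabla^2_{\partial_s}\gamma_t|\,|\partial_\theta\gamma|\,d\theta,
\]
and then by Cauchy--Schwarz with $ds=|\partial_\theta\gamma|d\theta$ this is $\le \sqrt{\ell_{\gamma(t)}}\,\|\nabla^2_{\partial_s}\gamma_t\|_{L^2(ds)}\le \sqrt{\ell_{\gamma(t)}/a_2}\,\sqrt{G_{\gamma(t)}(\gamma_t,\gamma_t)}$. (3) Integrate in $t$ from $0$ to $1$: for any fixed $\theta_0,\theta_1$,
\[
\bigl|(u(1,\theta_1)-u(1,\theta_0))-(u(0,\theta_1)-u(0,\theta_0))\bigr|
\le \sqrt{\tfrac{\max_t\ell_{\gamma(t)}}{a_2}}\ \len(\gamma).
\]
(4) Identify the endpoints: at $t=0$, $\gamma(0)=\lambda\varphi$ so $u(0,\theta)=\ln\lambda+\ln\varphi'(\theta)$, and at $t=1$, $u(1,\theta)=\ln\lambda+\ln\psi'(\theta)$; the additive $\ln\lambda$ cancels in the double difference, leaving exactly $\bigl|(\ln\psi'(\theta_1)-\ln\psi'(\theta_0))-(\ln\varphi'(\theta_1)-\ln\varphi'(\theta_0))\bigr|$, which equals $\bigl|\ln\tfrac{\varphi'(\theta_1)}{\psi'(\theta_1)}-\ln\tfrac{\varphi'(\theta_0)}{\psi'(\theta_0)}\bigr|$. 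Taking the maximum over $\theta_0,\theta_1$ yields $\Delta(\varphi,\psi)\le \sqrt{\max_t\ell_{\gamma(t)}/a_2}\ \len(\gamma)$, which rearranges to \eqref{eq: delta bound}.

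The main obstacle I expect is Step (2): getting the clean identity $\partial_\theta\partial_t u = |\partial_\theta\gamma|\,\nabla^2_{\partial_s}\gamma_t$ (or whatever the correct form is) requires carefully commuting $\partial_t$ past the arc-length normalization $1/|\partial_\theta\gamma|$, since $|\partial_\theta\gamma|$ itself depends on $t$; one must check that the extra terms generated either vanish (because $d=1$ forces velocity and its arc-length derivatives to be scalar multiples of a single direction) or reorganize into $\nabla^2_{\partial_s}\gamma_t$. A secondary technical point is regularity: $\gamma$ is only piecewise $C^1$ in $t$ and $H^n$ in $\theta$, so the differentiation-under-the-integral and the fundamental theorem of calculus in $t$ must be justified on each $C^1$ piece and then summed, and one should confirm $u$ and its relevant derivatives are absolutely continuous in $\theta$ (which follows from $n\ge 2$ and \Cref{prop:abs cont}). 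Once the pointwise-in-$t$ estimate $|\partial_t(u(t,\theta_1)-u(t,\theta_0))|\le \sqrt{\ell_{\gamma(t)}/a_2}\,\sqrt{G_{\gamma(t)}(\gamma_t,\gamma_t)}$ is in hand, the rest is just integrating and bounding $\ell_{\gamma(t)}\le\max_t\ell_{\gamma(t)}$, which is finite by \Cref{cor: boundness of ellc}.
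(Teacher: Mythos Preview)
Your proposal is correct and follows essentially the same route as the paper's proof: both recognize that $\nabla_{\partial_s}\gamma_t=\partial_t\ln\gamma'$ (using $d=1$ and the commutation $\partial_t\partial_\theta\gamma=\partial_\theta\partial_t\gamma$, which the paper justifies via \cite[Theorem~9.41]{alma990011799400203701}), then combine the fundamental theorem of calculus in $\theta$, the Cauchy--Schwarz bound $\|\cdot\|_{L^1(ds)}\le\sqrt{\ell_{\gamma(t)}}\,\|\cdot\|_{L^2(ds)}$, and integration in $t$. Your Step~(2) worry is unfounded: once $\partial_t u=\nabla_{\partial_s}\gamma_t$ is in hand, the identity $\partial_\theta\partial_t u=|\gamma'|\,\nabla^2_{\partial_s}\gamma_t$ is immediate from the definition $\nabla^2_{\partial_s}h=\tfrac{1}{|\gamma'|}\partial_\theta(\nabla_{\partial_s}h)$, with no extra terms to cancel.
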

\begin{remark*}
The only dependence on $\lambda$ in the right-hand side of \eqref{eq: delta bound} is through $\max_{t\in[0,1]}\ell_{\gamma(t)}$.
\end{remark*}
\begin{proof}
    By definition, the length of $\gamma$ is given by 
    \begin{equation}
    \label{lemeq:length}
        \begin{split}
            \len(\gamma)=&\int_0^1\sqrt{G_c\big{(}\gamma_t,\gamma_t\big{)}}dt=\int_0^1\sqrt{\sum_{0\le i\le n} a_i\|\nabla^i_{\partial_s}\gamma_t\|_{L^2(ds)}^2}dt\\
            \ge&\int_0^1\sqrt{a_2\|\nabla^2_{\partial_s}\gamma_t\|_{L^2(ds)}^2}dt=\sqrt{a_2}\int_0^1\|\nabla^2_{\partial_s}\gamma_t\|_{L^2(ds)}dt.
        \end{split}
    \end{equation}
    Since $\gamma$ remains in the endpoint-fixing connected component, $\gamma'(t,\theta)=\partial_\theta\gamma(t,\theta)>0$ for every $\theta\in[0,1]$ and $t\in[0,1]$, and 
    \[\nabla^2_{\partial_s}\gamma_t=\frac{1}{\gamma'}\partial_\theta\frac{\gamma_t'(\theta)}{\gamma'}.\]
    Moreover, by Cauchy-Schwartz inequality for $G_{\gamma(t)}$, \[\frac{\|\nabla^2_{\partial_s}\gamma_t\|_{L^1(ds)}}{\sqrt{\ell_{\gamma(t)}}}=\frac{\|\nabla^2_{\partial_s}\gamma_t\|_{L^1(ds)}}{\|1\|_{L^2(ds)}}
    \le\|\nabla^2_{\partial_s}\gamma_t\|_{L^2(ds)}.\] 
    Applying this in \eqref{lemeq:length} point-wise for every $t\in[0,1]$ results in
    \begin{equation}
    \label{eq: delta proof1}
        \len(\gamma)\ge\sqrt{a_2}\intop_0^1\frac{\|\nabla^2_{\partial_s}\gamma_t\|_{L^1(ds)}}{\sqrt{\ell_{\gamma(t)}}}dt\ge\sqrt{\frac{a_2}{\max_{t\in[0,1]}\ell_{\gamma(t)}}}\intop_0^1\|\nabla^2_{\partial_s}\gamma_t\|_{L^1(ds)}dt.
    \end{equation}
    For any $\theta_0\le\theta_1\in[0,1]$, we have the following inequality,
    \begin{equation*}
        \begin{split}
            \|\nabla^2_{\partial_s}\gamma_t\|_{L^1(ds)}&\coloneqq\int_0^1|\frac{1}{\gamma'}(\partial_\theta\nabla_{\partial_s}\gamma_t)|\cdot|\gamma'|d\theta=\int_0^1|\partial_\theta\nabla_{\partial_s}\gamma_t|d\theta\\
            &\ge\int_{\theta_0}^{\theta_1}|\partial_\theta\nabla_{\partial_s}\gamma_t|d\theta\ge\bigg|\int_{\theta_0}^{\theta_1}\partial_\theta\nabla_{\partial_s}\gamma_td\theta\bigg|.
        \end{split}
    \end{equation*}
    By the fundamental theorem, for every $t\in[0,1]$,
    \begin{equation*}
        \begin{split}       
            \bigg|\int_{\theta_0}^{\theta_1}\partial_\theta\nabla_{\partial_s}\gamma_t(t,\theta)d\theta\bigg|=&\bigg|\nabla_{\partial_s}\gamma_t(t,\theta_1)-\nabla_{\partial_s}\gamma_t(t,\theta_0)\bigg|\\
            =&\bigg|\frac{1}{\gamma'}\partial_\theta\partial_t\gamma(t,\theta_1)-\frac{1}{\gamma'}\partial_\theta\partial_t\gamma(t,\theta_0)\bigg|.
        \end{split}
    \end{equation*}

    Thus, by \eqref{eq: delta proof1} and the integral triangle inequality we have
    \begin{equation}
    \label{eq: delta proof2}
        \begin{split}
            \len(\gamma)\ge\sqrt{\frac{a_2}{\max_{t\in[0,1]}\ell_{\gamma(t)}}}\cdot\big|\intop_{\theta_0}^{\theta_1}\left(\frac{1}{\gamma'}\partial_\theta\partial_t\gamma(t,\theta_1)-\frac{1}{\gamma'}\partial_\theta\partial_t\gamma(t,\theta_0)\right)dt\big|.
        \end{split}
    \end{equation} 
    Although changing the order of derivatives may seem trivial when considering $\gamma:[0,1]\times[0,1]\rightarrow\mathbb{R}$, it is not the case as we are only guaranteed that $\gamma$ is once differentiable with respect to $t$.  However,   \cite[Theorem~9.41]{alma990011799400203701} implies that if $\gamma:(0,1)\times(0,1)\rightarrow\mathbb{R}^d$ such that $\partial_t\gamma,\partial_\theta\gamma, \text{ and } \partial_{\theta}\partial_t\gamma$ exist, and $\partial_{\theta}\partial_t\gamma$ is continuous, then  $\partial_{t}\partial_\theta\gamma$ exists and 
    \[\partial_{\theta}\partial_t\gamma=\partial_{t}\partial_\theta\gamma.\]
    Therefore,
    \[ \frac{1}{\gamma'}\partial_\theta\partial_t\gamma(t,\theta)=\frac{\partial_t\gamma'}{\gamma'}(t,\theta)=\partial_t\big(\ln(\gamma')\big),\qquad \forall \theta\in[0,1], \ \forall t\in[0,1].\]
    This, together with the use of the fundamental theorem on \eqref{eq: delta proof2} shows:
    \[\len(\gamma)\ge \sqrt{\frac{a_2}{\max_{t\in[0,1]}\ell_{\gamma(t)}}}\bigg|\ln\left(\frac{\gamma'(1,\theta_1)}{\gamma'(0,\theta_1)}\right)-\ln\left(\frac{\gamma'(1,\theta_0)}{\gamma'(0,\theta_0)}\right)\bigg|\]
    where the derivatives in the last expression are taken with respect to $\theta$.\par
    Since $\gamma(0)=\lambda \psi,\gamma(1)=\lambda \varphi$, we have \[\gamma(0,\theta)'=\lambda\psi'(\theta),\ \gamma(1,\theta)'=\lambda\varphi'(\theta) \qquad 
    \forall\theta\in[0,1].\]
   Thus, 
   \[\len(\gamma)\ge \sqrt{\frac{a_2}{\max_{t\in[0,1]}\ell_{\gamma(t)}}}\bigg|\ln\left(\frac{\varphi'(\theta_1)}{\psi'(\theta_1)}\right)-\ln\left(\frac{\varphi'(\theta_0)}{\psi'(\theta_0)}\right)\bigg|.\]
    Taking the maximum on $\theta_0,\theta_1\in[0,1]$ concludes the proof.
\end{proof}
In the following, we use \Cref{lemma: dist-len ineq.} to show that the distance $d_G(\lambda\varphi,\lambda\psi)$ is bounded away from zero uniformly in $\lambda$. Notice that the right-hand side in \eqref{eq: delta bound} is not necessarily bounded away from zero for every path, but by focusing on paths $\gamma^\lambda\in\Gamma\left(\lambda\varphi,\lambda\psi\right)$ that approximate $d_G(\lambda\varphi,\lambda\psi)$, we can ensure that $\max_{t\in[0,1]}\ell_{\gamma^\lambda(t)}$ is bounded from above uniformly in $\lambda$.

\begin{theorem}
    \label{thm: distinct limit points}
    Let $n\ge2$, and let $G$ be a constant coefficient Riemannian metric as in \eqref{eq: type of metric} on $\In$ such that $a_2>0$. Then for any pair $\psi\ne\varphi\in\diffp$ of distinct diffeomorphisms, and any sequence $(\ell_m)_{m=1}^\infty\subset\mathbb{R}_+$ such that $\ell_m\rightarrow0$, there exists some $\delta>0$ such that
    \begin{equation}
        \label{eq: thm dist}
        d_G\big{(}\ell_m\psi,\ell_m\varphi\big{)}>\delta.
    \end{equation}
    In particular, the Cauchy sequences $\big{(}\ell_m\psi\big{)}_{m=1}^\infty$ and $\big{(}\ell_m\varphi\big{)}_{m=1}^\infty$ converge to distinct points in the metric completion of $\In$.
\end{theorem}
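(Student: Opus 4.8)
The plan is to derive the bound from \Cref{lemma: dist-len ineq.}. That estimate is only useful when $\max_{t}\ell_{\gamma(t)}$ is under control, so I would show that every \emph{sufficiently short} path joining $\ell_m\psi$ and $\ell_m\varphi$ is trapped inside a single metric ball that does not depend on $m$, and then invoke the lemma on that ball. The positivity of $\Delta(\varphi,\psi)$ (\Cref{prop: delta positive}) and the hypothesis $a_2>0$ then give the uniform lower bound $\delta$.

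The key preliminary move is to relocate the shrinking curves into a fixed ball. Regarding the identity map $\mathrm{id}\in\diffp$ as the straight-segment curve $\theta\mapsto\theta\in\In$, which has length $1$, \Cref{cor: shrink dist} gives $d_G(\ell_m\,\mathrm{id},\mathrm{id})<C$ for every $m$ with $\ell_m<1$, where $C=C(a_0,a_1)$ is the constant from \Cref{lemma:shrink cost}. Applying reparametrization invariance (\Cref{cor: dist inv}) with the diffeomorphism $\varphi$ and using $\mathrm{id}\circ\varphi=\varphi$ and $(\ell_m\,\mathrm{id})\circ\varphi=\ell_m\varphi$, this upgrades to $d_G(\varphi,\ell_m\varphi)=d_G(\mathrm{id},\ell_m\,\mathrm{id})<C$. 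Hence $B_1(\ell_m\varphi)\subseteq\mathcal{B}:=B_{1+C}(\varphi)$, a ball whose center and radius depend only on $\varphi$ and the coefficients of $G$. I would then set $M:=\sup_{c\in\mathcal{B}}\ell_c$, which is finite by \Cref{cor: boundness of ellc} and, crucially, independent of $m$.

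Now fix $m$ with $\ell_m<1$ and let $\gamma\in\Gamma(\ell_m\varphi,\ell_m\psi)$ be any path with $\len(\gamma)\le1$. Every point satisfies $d_G(\ell_m\varphi,\gamma(t))\le\len(\gamma)\le1$, so $\gamma(t)\in B_1(\ell_m\varphi)\subseteq\mathcal{B}$ and therefore $\max_{t}\ell_{\gamma(t)}\le M$. Since $a_2>0$, \Cref{lemma: dist-len ineq.} applied with $\lambda=\ell_m$ yields
\[
\len(\gamma)\ \ge\ \Delta(\varphi,\psi)\sqrt{\frac{a_2}{\max_{t}\ell_{\gamma(t)}}}\ \ge\ \Delta(\varphi,\psi)\sqrt{\frac{a_2}{M}}\ =:\ \delta_0>0,
\]
with $\delta_0>0$ by \Cref{prop: delta positive}. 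Thus \emph{every} $\gamma\in\Gamma(\ell_m\varphi,\ell_m\psi)$ has $\len(\gamma)\ge\min(1,\delta_0)$ (the display handles $\len(\gamma)\le1$, the other case is trivial), so $d_G(\ell_m\psi,\ell_m\varphi)\ge\min(1,\delta_0)$ whenever $\ell_m<1$. Only finitely many indices have $\ell_m\ge1$, and for each of those $d_G(\ell_m\psi,\ell_m\varphi)>0$ because $d_G$ is a genuine metric and $\ell_m\psi\ne\ell_m\varphi$; taking $\delta$ to be half the minimum of $\delta_0$, $1$, and these finitely many positive values gives \eqref{eq: thm dist}. Finally, $(\ell_m\psi)_m$ and $(\ell_m\varphi)_m$ are Cauchy by \Cref{claim: straight-line Cauchy}, hence converge to points $p_\psi,p_\varphi$ in the completion; the distance between $p_\psi$ and $p_\varphi$ equals $\lim_m d_G(\ell_m\psi,\ell_m\varphi)$ (the limit exists since $|d_G(\ell_m\psi,\ell_m\varphi)-d_G(\ell_k\psi,\ell_k\varphi)|\le d_G(\ell_m\psi,\ell_k\psi)+d_G(\ell_m\varphi,\ell_k\varphi)\to0$), which is $\ge\delta_0>0$, so $p_\psi\ne p_\varphi$.

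The main obstacle is precisely the control of $\max_{t}\ell_{\gamma(t)}$: because $\ell_m\to0$, a naive ball around $\ell_m\varphi$ could a priori contain arbitrarily long curves — and by \Cref{lemma: dist-len ineq.} itself, genuinely short connecting paths are \emph{forced} through long curves once $\ell_m$ is small — so without the relocation step the lower bound would degrade with $m$. Reparametrization invariance is what resolves this: it converts the shrinking estimate for the arbitrary curve $\varphi$ into the elementary one for the straight segment $\mathrm{id}$, pinning all the relevant balls inside the single fixed ball $\mathcal{B}$.
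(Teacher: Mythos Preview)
Your proof is correct and follows the same overall strategy as the paper: trap near-optimal paths in a fixed metric ball independent of $m$, invoke \Cref{cor: boundness of ellc} to bound $\max_t\ell_{\gamma(t)}$, and then apply \Cref{lemma: dist-len ineq.} together with \Cref{prop: delta positive}.

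The only real difference is in how the fixed ball is produced. The paper first uses \Cref{claim: straight-line Cauchy} to obtain an a priori upper bound $D$ on $d_G(\ell_m\psi,\ell_m\varphi)$, then takes paths $\gamma_m$ with $\len(\gamma_m)<2D$ and uses the boundedness of the Cauchy sequence to locate them in a ball $B_{r_0+2D}(c_0)$. You instead go directly through \Cref{cor: shrink dist} plus reparametrization invariance to place every $\ell_m\varphi$ inside $B_{1+C}(\varphi)$, and use the fixed threshold $\len(\gamma)\le1$ in place of $2D$. Your route is slightly more self-contained for the lower-bound part, since it does not rely on the Cauchy property to obtain the containing ball; on the other hand, the paper's route makes the constant $\delta$ more explicitly tied to the eventual limit distance via the bound $D$. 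Both arguments are equally valid and yield the same conclusion.
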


\begin{proof}
    By \Cref{claim: straight-line Cauchy}, the sequences $\big{(}\ell_m\psi\big{)}_{m=1}^\infty$ and $\big{(}\ell_m\varphi\big{)}_{m=1}^\infty$ are indeed Cauchy, thus the limit 
    \[\lim_{m\rightarrow\infty}d_G\big{(}\ell_m\psi,\ell_m\varphi\big{)}\]exists and is finite. In particular, $ d_G\big(\ell_m\psi,\ell_m\varphi\big)$is bounded from above by some $D>0$.
    Next, assume $\gamma_m\in\Gamma\left(\ell_m\varphi,\ell_m\psi\right)$, such that 
    \[\len(\gamma_m)<2d_G\big{(}\ell_m\psi,\ell_m\varphi\big{)}\le2D.\]
    By \cref{lemma: dist-len ineq.},  
    \[d_G\big{(}\ell_m\psi,\ell_m\varphi\big{)}>\frac{\len(\gamma_m)}{2}\ge\frac{\Delta(\varphi,\psi)}{2}\sqrt{\frac{a_2}{\max_{t\in[0,1]}\ell_{\gamma_m(t)}}}\]
    On the right hand side, both $\Delta(\varphi,\psi)$ and $\sqrt{a_2}$ are constants independent of $m$. Since $\big{(}\ell_m\psi\big{)}_{m=1}^\infty$ is Cauchy, it is bounded in some ball $B_{r_0}(c_0)\subset\In$. Taking $r=r_0+2D$ ensures that for all $m\in\mathbb{N}$, the path $\gamma_m$ is contained in $B_r(c_0)$. As stated in \Cref{cor: boundness of ellc}, the function $c\mapsto\ell_c$ is bounded on $B_r(c_0)$. Hence, there exists some uniform bound, $L>0$, such that
    \[\ell_{\gamma_m(t)}<L \quad \forall m\in\mathbb{N},\quad \forall t\in[0,1]\]
    We conclude that 
    \begin{equation}
        \label{eq: dist delta} d_G\big{(}\ell_m\psi,\ell_m\varphi\big{)}\ge\frac{\Delta\left(\varphi,\psi\right)\sqrt{a_2}}{2\sqrt{L}}\eqqcolon\delta>0
    \end{equation}     
    In particular, the limit points of $\big{(}\ell_m\psi\big{)}_{m=1}^\infty$ and of $\big{(}\ell_m\varphi\big{)}_{m=1}^\infty$ in the metric completion space are distinct.
\end{proof}

\begin{definition}
    We denote the metric completion space of $\Big(\In,d_G\Big)$ by 
        \[\overline{\In}.\] 
    We denote the induced metric by $\bar d_G$.
\end{definition}
The result of \Cref{thm: distinct limit points} can now be stated as follows:

\begin{cor}
\label{cor: comp points}
Let $n\ge2$, let $G$ be a constant coefficient Riemannian metric as in \eqref{eq: type of metric} on $\In$ such that $a_2>0$, and let $\ell_m\searrow0$ be some sequence. Denote by $\varphi^0$ the limit point in $\overline{\In}$ of $(\ell_m\varphi)_{m=1}^\infty$ for $\varphi\in\diff$ .\par
    Then, the mapping  
    \begin{equation*}
    \begin{split}
        \diff&\hookrightarrow\overline{\In}\setminus\In\\
        \varphi&\mapsto\varphi^0
    \end{split}
    \end{equation*} 
    is injective, and we define 
    \[\diffO\coloneqq\bigg\{\varphi^0\bigg|\varphi\in\diff\bigg\}.\]
\end{cor}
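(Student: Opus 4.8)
The plan is to read this off almost directly from \Cref{thm: distinct limit points}, with two small pieces of bookkeeping: that the assignment $\varphi\mapsto\varphi^0$ really lands in $\overline{\In}\setminus\In$, and that a uniform lower bound on $d_G(\ell_m\psi,\ell_m\varphi)$ descends to a strictly positive distance between the limit points. First I would verify well-definedness: for each $\varphi\in\diff$, \Cref{claim: straight-line Cauchy} shows that $(\ell_m\varphi)_{m=1}^\infty$ is a divergent Cauchy sequence in $\In$, so it defines a point $\varphi^0$ in the completion $\overline{\In}$; this point is not in $\In$, since convergence in $\overline{\In}$ to an element of $\In$ would force convergence in $\In$ (the embedding $\In\hookrightarrow\overline{\In}$ being isometric), contradicting divergence — and divergence itself follows from $\ell_{\ell_m\varphi}=\ell_m\to0$ together with the continuity of $c\mapsto\ell_c$ (\Cref{prop: Lip cont.}) and $\ell_c>0$ for every immersion $c$. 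Hence $\varphi\mapsto\varphi^0$ maps $\diff$ into $\overline{\In}\setminus\In$.

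For injectivity, take $\varphi\ne\psi$ in $\diff$. In the main case where both are endpoint-fixing, \Cref{thm: distinct limit points} (applicable since $a_2>0$) provides a $\delta>0$ with $d_G(\ell_m\psi,\ell_m\varphi)>\delta$ for all $m$. Since $\ell_m\varphi\to\varphi^0$ and $\ell_m\psi\to\psi^0$ in $(\overline{\In},\bar d_G)$, the triangle inequality gives $\bar d_G(\varphi^0,\psi^0)\ge d_G(\ell_m\psi,\ell_m\varphi)-\bar d_G(\ell_m\varphi,\varphi^0)-\bar d_G(\ell_m\psi,\psi^0)>\delta-\varepsilon_m$ with $\varepsilon_m\to0$, so letting $m\to\infty$ yields $\bar d_G(\varphi^0,\psi^0)\ge\delta>0$, hence $\varphi^0\ne\psi^0$. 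If instead both $\varphi,\psi$ are endpoint-switching, the isometry $v+\ell\chi(\theta)\mapsto v+\ell\chi(1-\theta)$ of \Cref{subsec:one dimensional resluts} conjugates the endpoint-switching component (and its completion) to the endpoint-fixing one while carrying the relevant limit points to the corresponding ones there, so injectivity transfers from the previous case. Finally, if exactly one of $\varphi,\psi$ is endpoint-fixing, then $\ell_m\varphi$ and $\ell_m\psi$ lie in different connected components of $\In$, so $d_G(\ell_m\psi,\ell_m\varphi)=+\infty$ and hence $\bar d_G(\varphi^0,\psi^0)=+\infty$; in particular the limit points are distinct.

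The only genuinely non-formal ingredient is \Cref{thm: distinct limit points} itself, which is already established, so there is no real obstacle here — everything else is a short deduction. The point most worth stating with care is the passage from the uniform-in-$m$ estimate $d_G(\ell_m\psi,\ell_m\varphi)>\delta$ to $\bar d_G(\varphi^0,\psi^0)\ge\delta$, which is precisely the triangle-inequality/continuity argument above and does not even require that $\lim_m d_G(\ell_m\psi,\ell_m\varphi)$ exist. With $\diffO$ then defined as the image of this injection, the statement follows; one may additionally remark, via \Cref{cor: rotation-translation invariance}, that $\varphi^0$ is independent of the chosen sequence $\ell_m\searrow0$, so $\diffO$ is intrinsically attached to $\diff$.
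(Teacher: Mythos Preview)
Your proposal is correct and follows essentially the same approach as the paper: the paper's proof simply cites \Cref{cor: rotation-translation invariance} for well-definedness, \Cref{claim: straight-line Cauchy} for the target lying in $\overline{\In}\setminus\In$, and \Cref{thm: distinct limit points} for injectivity. Your version is more detailed in two harmless ways: you spell out the triangle-inequality passage to $\bar d_G(\varphi^0,\psi^0)\ge\delta$ (which the paper already folds into the ``In particular'' clause of \Cref{thm: distinct limit points}), and you explicitly treat the endpoint-switching and mixed-orientation cases, which the paper handles implicitly via the earlier remark that the isometry $\chi(\theta)\mapsto\chi(1-\theta)$ makes the two connected components symmetric.
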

\begin{proof}
    \Cref{cor: rotation-translation invariance} ensures that the mapping described is independent of the choice of $(\ell_m)_{m=1}^\infty$  and thus well defined. \Cref{claim: straight-line Cauchy} guarantees that $\varphi^0\in\overline{
    \In}\setminus\In$. The fact that the mapping $\varphi\mapsto\varphi^0$ is injective is the result of \Cref{thm: distinct limit points}.
\end{proof}

\subsubsection{ Characterization of the Completion space}
\Cref{cor: comp points} established, in the case of a constant coefficient Riemannian metric $G$ with $a_2>0$, the inclusion
\[\diffO\subset\overline{\In}\setminus\In.\]
As a subset of $\overline{\In}$, it can be thought of as a metric space with the metric induced by $\bar d_G$.\par
In the following, we look into the properties of $\diffO\subset\overline{\In}$. We begin with the following observation: 
\begin{lemma}
\label{lem: comp char}
    Any point in $\overline{\In}\setminus\In$ can be expressed as a limit of a sequence from $\diffO$.\par
    In other words, we have  
    \[\overline{\In}=\In\cup\overline{\diffO}\]
    where the closure $\overline{\diffO}$ is taken with respect to the metric $ \bar d_G$.
\end{lemma}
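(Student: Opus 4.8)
The plan is to establish the nontrivial inclusion $\overline{\In}\setminus\In\subseteq\overline{\diffO}$; since $\diffO\subseteq\overline{\In}\setminus\In$ by \Cref{cor: comp points} and $\overline{\In}=\In\cup(\overline{\In}\setminus\In)$, this immediately yields $\overline{\In}=\In\cup\overline{\diffO}$. So fix $p\in\overline{\In}\setminus\In$ and realize it as $p=\lim_m c_m$ for a Cauchy sequence $(c_m)\subset\In$ that does not converge in $\In$. By \cite[Theorem~6.3]{Bauer2020} such a sequence has vanishing length, $\ell_m:=\ell_{c_m}\to0$, and after discarding finitely many terms we may assume all $c_m$ lie in the endpoint-fixing component (the other component is handled identically via the isometry $v+\ell\varphi(\theta)\mapsto v+\ell\varphi(1-\theta)$). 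By \Cref{prop: div cauchy seq} write $c_m=v_m+\ell_m\varphi_m$ with $v_m=c_m(0)\in\mathbb{R}$ and $\varphi_m\in\diffp$, and let $\varphi_m^0\in\diffO$ be the associated limit point; by \Cref{cor: rotation-translation invariance}, $\varphi_m^0=\lim_j\lambda_j\varphi_m$ for any sequence $\lambda_j\searrow0$. The goal is to show $\varphi_m^0\to p$.

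The heart of the argument is the estimate
\begin{equation}
\label{eq:plan-key}
\bar d_G\big(c_m,\varphi_m^0\big)\le C\max\!\big(\ell_m^{3/2},\ell_m^{1/2}\big),
\end{equation}
with $C=C(a_0,a_1)$ the constant of \Cref{lemma:shrink cost}. The subtlety — and what I expect to be the main obstacle — is that one must not shrink $c_m$ directly: a generic immersion in $\In$ has nonzero curvature, so the path $t\mapsto tc_m$ has infinite length by \Cref{lemma:shrink cost}. Instead I would first reparametrize. Since $c\mapsto c\circ\psi$ is an isometry of $(\In,d_G)$ (\Cref{cor: dist inv}) for every $\psi\in\diff$, it extends uniquely to an isometry of the completion $\overline{\In}$, and this extension, being continuous, commutes with limits. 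Applying the extension of reparametrization by $\varphi_m^{-1}\in\diffp$ sends $c_m$ to the constant-speed affine segment $v_m+\ell_m\,\mathrm{id}$, and sends $\varphi_m^0=\lim_j\lambda_j\varphi_m$ to $\lim_j(\lambda_j\varphi_m\circ\varphi_m^{-1})=\lim_j\lambda_j\,\mathrm{id}=\mathrm{id}^0$; hence $\bar d_G(c_m,\varphi_m^0)=\bar d_G(v_m+\ell_m\,\mathrm{id},\mathrm{id}^0)$.

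To bound the right-hand side I would use the shrink path $\gamma(t):=t(v_m+\ell_m\,\mathrm{id})$, $t\in(0,1]$, with $\gamma(1)=v_m+\ell_m\,\mathrm{id}$. Its base curve is a straight line of length $\ell_m$, so $\len(\gamma)\le C\max(\ell_m^{3/2},\ell_m^{1/2})$ by \Cref{lemma:shrink cost}. On the other hand $\gamma(t)=tv_m+(t\ell_m)\,\mathrm{id}$ is, along any sequence $t_j\to0$, a vanishing-length straight-line family uniformly parametrized by $\mathrm{id}$ inside a single connected component, hence Cauchy by \Cref{claim: straight-line Cauchy} with limit independent of the constant and length parts by \Cref{cor: rotation-translation invariance}, namely $\lim_j\lambda_j\,\mathrm{id}=\mathrm{id}^0$. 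Letting $\varepsilon\to0$ in $d_G(\gamma(\varepsilon),\gamma(1))\le\len(\gamma)$ gives \eqref{eq:plan-key}. From \eqref{eq:plan-key} we get $\bar d_G(c_m,\varphi_m^0)\to0$; combined with $d_G(c_m,c_n)\to0$, the triangle inequality shows $(\varphi_m^0)$ is Cauchy in the complete space $\overline{\In}$, so $\varphi_m^0\to q$ for some $q$, and $\bar d_G(p,q)\le\bar d_G(p,c_m)+\bar d_G(c_m,\varphi_m^0)+\bar d_G(\varphi_m^0,q)\to0$, i.e.\ $q=p$. Thus $p=\lim_m\varphi_m^0\in\overline{\diffO}$, which completes the inclusion and hence the proof.

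I expect the one genuine pitfall to be precisely the bookkeeping in the middle paragraph: a reparametrization that straightens $c_m$ necessarily displaces the target point $\varphi_m^0$ (indeed $\mathrm{id}^0\neq\varphi_m^0$ by \Cref{thm: distinct limit points}), so identifying that $\varphi_m^{-1}$ moves $\varphi_m^0$ exactly to $\mathrm{id}^0$ — and that reparametrization-invariance extends to an isometry of $\overline{\In}$ commuting with limits — is essential rather than cosmetic. Everything else (the shrink-path length bound, the convergence $\gamma(t)\to\mathrm{id}^0$, and the final Cauchy/uniqueness argument) is a direct assembly of \Cref{lemma:shrink cost}, \Cref{claim: straight-line Cauchy}, \Cref{cor: rotation-translation invariance}, and \Cref{cor: comp points}.
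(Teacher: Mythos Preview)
Your proof is correct and follows the same strategy as the paper's: show $\bar d_G(c_m,\varphi_m^0)\to 0$ via a shrink-path length bound, then conclude by the triangle inequality. The one redundancy is your reparametrization detour: your stated worry that ``a generic immersion in $\In$ has nonzero curvature'' is misplaced, since in $d=1$ every curve $v+\ell\varphi$ is geometrically a straight-line segment (its constant-speed reparametrization is affine), so \Cref{lemma:shrink cost} and \Cref{cor: shrink dist} already apply directly to $c_m$. The paper exploits exactly this: it invokes \Cref{cor: rotation-translation invariance} to reduce to $v_m=0$ and then applies \Cref{cor: shrink dist} to $\ell_m\varphi_m$ itself, obtaining $\bar d_G(\ell_m\varphi_m,\varphi_m^0)\le C\max(\ell_m^{3/2},\ell_m^{1/2})$ without ever extending the reparametrization action to the completion.
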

\begin{remark*}
    Note that this characterization describes the completion of the whole space, i.e., both connected components. Furthermore, it holds for any constant coefficient metric $G$ and does not necessarily require $a_2>0$. However, in the case of $a_2=0$, the mapping in \Cref{cor: comp points} may not be injective, and thus $\diffO$ may only correspond to a subset of $\diff$, possibly to a single element.
\end{remark*}
\begin{proof}
    Let $\eta\in\overline{\In}\setminus\In$, by \Cref{prop: div cauchy seq} it must be the limit of some Cauchy sequence $c_m=v_m+\ell_m\varphi_m$, and by \Cref{cor: rotation-translation invariance}, we may assume $v_m=0$. \par
    Now consider $\bar d_G (\ell_m\varphi_m,\varphi_m^0)$ for any fixed $m\in\mathbb{N}$. Since $\varphi_m^0$ is the limit of any Cauchy sequence of the form $(\lambda_k\varphi_m)_{k=1}^\infty$ with $\lambda_k\searrow0$,  by definition of the induced metric,
    \[\bar d_G (\ell_m\varphi_m,\varphi_m^0)\coloneqq\lim_{k\rightarrow\infty}d_G(\ell_m\varphi_m,\lambda_k\varphi_m).\]
    For $k$ large enough, $\lambda_k<\ell_m$, and by \Cref{cor: shrink dist} we have
    \[d_G(\ell_m\varphi_m,\lambda_k\varphi_m)\le C\max(\ell_m^{1.5},\ell_m^{0.5})\]
    Meaning
    \[\bar d_G(\ell_m\varphi_m,\varphi_m^0)<C\max(\ell_m^{1.5},\ell_m^{0.5})\underset{m\rightarrow\infty}{\longrightarrow}0.\]
    By triangle inequality, we have
    \[\bar d_G(\eta,\varphi_m^0)\le\bar d_G(\eta,\ell_m\varphi_m)+\bar d_G(\ell_m\varphi_m,\varphi_m^0).\]
    Since $d_G(\eta,\ell_m\varphi_m)\rightarrow0$ by the definition of $\eta$, and $\bar d_G(\ell_m\varphi_m,\varphi_m^0)\rightarrow0$ as we have shown,  \[\eta=\lim_{m\rightarrow\infty}\varphi_m^0\] which concludes the proof.
\end{proof}
Next, we aim to provide a more detailed characterization of $\overline{\diff}$, for the case of a metric $G$ with $a_2>0$. 

\begin{definition}
    The space $C\big([0,1]\big)$ is defined as the space of all continuous functions $[0,1]\rightarrow\mathbb{R}$, endowed with the supremum norm $\|f\|_\infty\coloneqq\sup_{\theta\in[0,1]}|f(\theta)|$.\par
    The space $C^1\big([0,1]\big)$ is defined as the space of all continuously differentiable functions $[0,1]\rightarrow\mathbb{R}$, endowed with the norm 
    \begin{equation}
    \label{eq: C1 metric}
        \|f\|_{C^1}\coloneqq\|f\|_\infty+\|f'\|_\infty.
    \end{equation}
    
\end{definition}
It is well known that $C^1\big([0,1]\big)$ is a Banach space; thus, complete as a metric space.
Notice the inclusion 
\[\diff\subset\mathcal{D}^1\big([0,1]\big)  \subset C^1\big([0,1]\big)\subset C\big([0,1]\big).\]\par
Prior to the analysis of diffeomorphism sequences, we collect the following lemma:
\begin{lemma}
\label{lemma: exp Cauchy}
    Let $(f_m)_{m=1}^\infty\subset C\big([0,1]\big)$ be a convergent sequence, then the sequence $(g_m)_{m=1}^\infty\subset C\big([0,1]\big)$ defined as $g_m(\theta)\coloneqq e^{f_m(\theta)}$ is convergent as well. 
\end{lemma}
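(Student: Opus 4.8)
The plan is to reduce the problem to a compact interval on which $\exp$ is Lipschitz, and then to propagate Cauchyness from $(f_m)$ to $(g_m)$. First I would use the fact that a convergent sequence in $C\big([0,1]\big)$ is bounded: there exists $M>0$ with $\|f_m\|_\infty\le M$ for all $m$, so every value $f_m(\theta)$ lies in the compact interval $[-M,M]$.

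Next, since $\exp'=\exp$ is increasing, the restriction of $\exp$ to $[-M,M]$ is Lipschitz with constant $e^M$. Hence, for all $m,k\in\mathbb{N}$ and all $\theta\in[0,1]$,
\[ |g_m(\theta)-g_k(\theta)| = \big|e^{f_m(\theta)}-e^{f_k(\theta)}\big| \le e^M\,|f_m(\theta)-f_k(\theta)| \le e^M\,\|f_m-f_k\|_\infty. \]
Taking the supremum over $\theta\in[0,1]$ gives $\|g_m-g_k\|_\infty\le e^M\,\|f_m-f_k\|_\infty$. Since $(f_m)_{m=1}^\infty$ converges, it is Cauchy in $C\big([0,1]\big)$, and this estimate shows $(g_m)_{m=1}^\infty$ is Cauchy as well. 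As $C\big([0,1]\big)$ is a Banach space, $(g_m)_{m=1}^\infty$ converges (in fact to $e^f$, where $f=\lim_m f_m$, though this is not needed for the statement).

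The only step that requires any care is securing the \emph{uniform} bound $M$ on the norms $\|f_m\|_\infty$; without it one cannot control the Lipschitz constant of $\exp$ uniformly in $m$, and the estimate above would break. This is immediate from the convergence of $(f_m)_{m=1}^\infty$, so there is no real obstacle — the remainder of the argument is routine. An alternative, essentially equivalent, route would be to note directly that $f_m\to f$ uniformly and that $\exp$ is uniformly continuous on $[-M,M]$, whence $e^{f_m}\to e^{f}$ uniformly; I would prefer the Cauchy-sequence phrasing since it makes the quantitative dependence explicit and matches how the lemma will be applied.
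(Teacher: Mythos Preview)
Your argument is correct. The paper's proof is a minor variant: instead of using the Lipschitz constant $e^{M}$ of $\exp$ on $[-M,M]$ and passing through the Cauchy property, it identifies the limit $f=\lim_m f_m$ directly and factors
\[
\big|g_m(\theta)-e^{f(\theta)}\big| \le \|e^{f}\|_\infty\,\big|e^{\,f_m(\theta)-f(\theta)}-1\big|,
\]
then uses that $f_m-f\to 0$ uniformly to conclude $e^{f_m-f}\to 1$ uniformly. Both routes hinge on the same point you flagged---a uniform sup-norm bound coming from convergence---and are essentially interchangeable; your Lipschitz/Cauchy phrasing gives an explicit quantitative estimate $\|g_m-g_k\|_\infty\le e^{M}\|f_m-f_k\|_\infty$, while the paper's factoring makes the limit $e^{f}$ visible from the start without invoking completeness of $C\big([0,1]\big)$.
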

\begin{proof}
    Denote by $f\in C^0 \big([0,1]\big)$ the limit of $(f_m)_{m=1}^\infty$, and consider \[\big|g_m(\theta)-e^{f(\theta)}\big|=|e^{f(\theta)}|\cdot\big|\frac{e^{f_m(\theta)}}{e^{f(\theta)}}-1\big|\le\|e^{f(\theta)}\|_\infty\cdot\big|\frac{e^{f_m(\theta)}}{e^{f(\theta)}}-1\big|.\]
    Since $\big|f_m(\theta)-f(\theta)\big|\underset{m\rightarrow\infty}{\longrightarrow}0$ uniformly in $\theta$, 
    \[e^{f_m(\theta)-f(\theta)}\underset{m\rightarrow\infty}{\longrightarrow}1\]
    uniformly in $\theta$, thus concluding the proof. 
\end{proof}

We now proceed to prove that any element in $\overline{\In} \setminus \In$ can be uniquely identified with a diffeomorphism $\eta\in\mathcal{D}^1\big([0,1]\big)$:

\begin{theorem}
    Let $G$ be a constant coefficient Riemannian metric as in \eqref{eq: type of metric} on $\In$, where $n\ge2$, such that $a_2>0$, and let $(\varphi_m^0)_{m=1}^\infty\subset \diffO\subset\overline{\In}$ be a Cauchy sequence. Then the corresponding sequence 
    \[(\varphi_m)_{m=1}^\infty\subset\diff\subset C^1\big([0,1]\big)\]
    is convergent w.r.t\ $\|\cdot\|_{C^1}$. Moreover, the limit $\varphi\coloneqq\lim_{m\rightarrow\infty}\varphi_m$ is a $C^1$-diffeomorphism of the interval $[0,1]$ onto itself.
\end{theorem}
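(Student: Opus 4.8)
The plan is to prove a converse to \Cref{lemma: dist-len ineq.} — showing that along the given Cauchy sequence the completion distance $\bar d_G(\varphi_m^0,\varphi_k^0)$ bounds $\Delta(\varphi_m,\varphi_k)$ from above with a constant that is uniform in $m,k$ — and then to exploit that $\Delta(\varphi_m,\varphi_k)$ is exactly the oscillation of $\ln\varphi_m'-\ln\varphi_k'$, which together with the endpoint-fixing constraint forces $\ln\varphi_m'$ to be uniformly Cauchy, hence $\varphi_m$ to be $C^1$-Cauchy.

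First I would set up the converse estimate. Since $(\varphi_m^0)$ is Cauchy it is bounded in $\overline{\In}$; I would also fix a reference curve $c^\ast\in\In$, say $c^\ast=\mathrm{id}_{[0,1]}$, so that $\bar d_G(\varphi_m^0,\varphi_k^0)$ and $\bar d_G(\varphi_m^0,c^\ast)$ are bounded by constants $D_1,D_2$ independent of $m,k$. For each fixed $m$ and any sequence $\lambda_j\searrow0$, the curves $(\lambda_j\varphi_m)_j$ form a Cauchy sequence by \Cref{claim: straight-line Cauchy}, with limit $\varphi_m^0$ independent of $(\lambda_j)$ by \Cref{cor: rotation-translation invariance}; hence $\lambda\varphi_m\to\varphi_m^0$ in $\overline{\In}$ as $\lambda\to0^+$. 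Since $\bar d_G$ is a continuous metric that restricts to $d_G$ on $\In$, it follows that $d_G(\lambda\varphi_m,\lambda\varphi_k)\to\bar d_G(\varphi_m^0,\varphi_k^0)$ as $\lambda\to0^+$. Now fix a pair $m\ne k$ (the case $\varphi_m=\varphi_k$ being trivial), and for each small $\lambda$ choose $\gamma^\lambda\in\Gamma(\lambda\varphi_m,\lambda\varphi_k)$ with $\len(\gamma^\lambda)<2\,d_G(\lambda\varphi_m,\lambda\varphi_k)$. For $\lambda$ small enough the length of $\gamma^\lambda$ and the distance $d_G(\lambda\varphi_m,c^\ast)$ are bounded by constants depending only on $D_1,D_2$, so $\gamma^\lambda$ is contained in a fixed metric ball $B_R(c^\ast)\subset\In$; by \Cref{cor: boundness of ellc} there is $L=L(R,c^\ast)<\infty$ with $\ell_c\le L$ on $B_R(c^\ast)$. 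Applying \Cref{lemma: dist-len ineq.} to $\gamma^\lambda$ gives $\len(\gamma^\lambda)\ge\Delta(\varphi_m,\varphi_k)\sqrt{a_2/L}$, whence $2\,d_G(\lambda\varphi_m,\lambda\varphi_k)>\Delta(\varphi_m,\varphi_k)\sqrt{a_2/L}$; letting $\lambda\to0^+$ yields $\Delta(\varphi_m,\varphi_k)\le(2\sqrt{L}/\sqrt{a_2})\,\bar d_G(\varphi_m^0,\varphi_k^0)$, with $L$ independent of $m,k$. As $(\varphi_m^0)$ is Cauchy, $\Delta(\varphi_m,\varphi_k)\to0$.

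Then I would convert this into $C^1$-convergence. The quantity $\Delta(\varphi,\psi)$ is precisely the oscillation $\max_{[0,1]}h-\min_{[0,1]}h$ of $h:=\ln\varphi'-\ln\psi'$; here all $\varphi_m$ lie in the endpoint-fixing component (one may assume so, since the two components of $\In$, and hence of $\overline{\In}$, are at infinite distance), so $\varphi_m'>0$ and $\ln\varphi_m'$ is a well-defined continuous function. Since each $\varphi_m$ is endpoint-fixing, $\int_0^1(\varphi_m'-\varphi_k')=\bigl(\varphi_m(1)-\varphi_m(0)\bigr)-\bigl(\varphi_k(1)-\varphi_k(0)\bigr)=0$, so $\varphi_m'-\varphi_k'$, and therefore $\ln\varphi_m'-\ln\varphi_k'$, vanishes at some point of $[0,1]$ by the intermediate value theorem; a continuous function with a zero has sup-norm at most its oscillation, so $\|\ln\varphi_m'-\ln\varphi_k'\|_\infty\le\Delta(\varphi_m,\varphi_k)\to0$. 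Hence $(\ln\varphi_m')$ is Cauchy in $\bigl(C([0,1]),\|\cdot\|_\infty\bigr)$ and converges to some $u\in C([0,1])$, and \Cref{lemma: exp Cauchy} gives $\varphi_m'=e^{\ln\varphi_m'}\to g:=e^u$ uniformly, with $g$ continuous and $g\ge e^{\min u}>0$. Setting $\varphi(\theta):=\int_0^\theta g$, uniform convergence of $\varphi_m'$ together with $\varphi_m(0)=0$ gives $\varphi_m\to\varphi$ uniformly, and $\varphi\in C^1([0,1])$ with $\varphi'=g>0$, $\varphi(0)=0$, and $\varphi(1)=\int_0^1 g=\lim_m\int_0^1\varphi_m'=1$; thus $\varphi$ is a strictly increasing $C^1$ map of $[0,1]$ onto itself with nowhere-vanishing continuous derivative, i.e.\ a $C^1$-diffeomorphism. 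Finally $\|\varphi_m-\varphi\|_{C^1}=\|\varphi_m-\varphi\|_\infty+\|\varphi_m'-g\|_\infty\to0$.

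The hard part will be the uniformity in the first step: \Cref{lemma: dist-len ineq.} only controls $\len(\gamma)$ by $\Delta(\varphi_m,\varphi_k)$ divided by $\sqrt{\max_t\ell_{\gamma(t)}}$, so to turn it into a genuine lower bound on the completion distance one must confine the relevant near-minimizing paths $\gamma^\lambda$ to a single metric ball on which $\ell_c$ is bounded — and this must hold uniformly both as $\lambda\to0^+$ and over all pairs $m,k$, which is exactly where boundedness of the Cauchy sequence $(\varphi_m^0)$ and \Cref{cor: boundness of ellc} are used. Once $\Delta(\varphi_m,\varphi_k)\to0$ is established, the remaining steps are elementary.
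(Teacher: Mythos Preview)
Your proposal is correct and follows essentially the same strategy as the paper: confine near-minimizing paths between $\lambda\varphi_m$ and $\lambda\varphi_k$ to a fixed metric ball (via boundedness of the Cauchy sequence and \Cref{cor: boundness of ellc}), apply \Cref{lemma: dist-len ineq.} to get a uniform bound $\Delta(\varphi_m,\varphi_k)\lesssim \bar d_G(\varphi_m^0,\varphi_k^0)$, then use the vanishing of $\ln\varphi_m'-\ln\varphi_k'$ at a point to upgrade this to uniform Cauchyness of $\ln\varphi_m'$ and hence $C^1$-convergence to a diffeomorphism. The only cosmetic difference is that you pass to the limit $\lambda\to0^+$ using continuity of $\bar d_G$, whereas the paper fixes a concrete auxiliary sequence $\ell_i\searrow 0$ and runs an $\varepsilon/3$ triangle-inequality argument; your version is arguably a bit cleaner but the substance is identical.
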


\begin{proof}
Let $(\varphi_m^0)_{m=1}^\infty\subset\diffO$ be Cauchy with respect to $\bar d_G$. Our goal is to use the inequality from \Cref{thm: distinct limit points}. 
To this end, we need to relate the Cauchy condition on $(\varphi_m^0)_{m=1}^\infty$ to an appropriate condition on a family of curves in $\In$. 
This will enable us to relate $\bar d_G (\varphi_m^0,\varphi_k^0)$ to the length of some path $\gamma:[0,1]\rightarrow\In$ to which \Cref{thm: distinct limit points} applies. \par 
Denote by $C$ the constant from \Cref{lemma:shrink cost}, and define $\ell_m\coloneqq\frac{1}{m^2\max\{1,C^2\}}$. 
By \Cref{lemma:shrink cost}, 
$\bar d_G\big(\ell_k\varphi_m,\varphi_m^0\big)\le\frac{1}{k}$ for any $m,k\in\mathbb{N}$. Since 
$(\varphi_m^0)_{m=1}^\infty\subset\overline{\In}$ is Cauchy, it is bounded by some ball $B_{r_0}(c_0)$ such that $c_0\in\In$. Taking $r\coloneqq r_0+2$, we have by triangle inequality that $\ell_k\varphi_m\in B_r(c_0)$ for all $m,k\in\mathbb{N}$.\par
Notice that the paths to which \Cref{thm: distinct limit points} applies are paths that connect curves of the same length. Take $4<N_1\in\mathbb{N}$
such that $d_G\big(\varphi_m^0,\varphi_k^0\big)<\frac{1}{4}$ for all $m,k>N_1$. By the triangle inequality,
\[d_G\big(\ell_i\varphi_m,\ell_i,\varphi_m\big)\le2\cdot\frac{1}{\ell_i}+\frac{1}{4}<\frac{1}{2}+\frac{1}{4}<1\qquad \forall i,m,k>N_1.\]
Thus, for $i,m,k>N_1$  and any $\gamma\in\Gamma\big(\ell_i\varphi_m,\ell_i\varphi_k\big)$ such that $\len(\gamma)<2d_G\big(\ell_i\varphi_m,\ell_i\varphi_k\big)$, $\gamma(t)$ remains in $B_r(c_0)$ for any $t\in[0,1]$.\par
As stated in \Cref{cor: boundness of ellc}, $c\mapsto\ell_c$ is bounded on any metric ball in $\Ind$. Thus, we have a uniform bound $L>0$  
\[\ell_{\gamma(t)}<L \quad \forall t\in[0,1]\]
for any $\gamma\in\Gamma\big(\ell_i\varphi_m,\ell_i\varphi_k\big)$ with $\len(\gamma)<2d_G\big(\ell_i\varphi_m,\ell_i\varphi_k\big)$  where $k,m,i>N_1$.\par
Given $\varepsilon>0$, we define $\varepsilon'\coloneqq\varepsilon\sqrt{\frac{a_2}{4L}}$ and take $N>N_1\in\mathbb{N}$ such that 
\[\bar{d}_G\big(\varphi_m^0,\varphi_k^0\big)<\frac{\varepsilon'}{3} \quad \forall m,k>N.\]
Taking $i>\frac{3}{\varepsilon'}$ guarantees, by the triangle inequality, that $d_G\big(\ell_i\varphi_m,\ell_i\varphi_k\big)<3\cdot\frac{\varepsilon'}{3}=\varepsilon'$.
Applying \Cref{thm: distinct limit points}, with $\delta$ chosen as in $\eqref{eq: dist delta}$,  shows that
\[\Delta(\varphi_m,\varphi_k)\le\sqrt{\frac{4L}{a_2}}d_G\big(\ell_k\varphi_m,\ell_k\varphi_k)<\sqrt{\frac{4L}{a_2}}\varepsilon'=\varepsilon\]
for any $m,k >N$, where $\Delta(\varphi_m,\varphi_k)$ is as defined in \Cref{def: delta phipsi}.
Fixing $\theta_0$ such that $\varphi_m'(\theta_0)=\varphi_k'(\theta_0)$ shows that
\begin{equation}
    \begin{split}
    \varepsilon&\ge \Delta(\varphi_m,\varphi_k)\coloneqq\max_{\theta_0,\theta_1}\big|\ln\big(\frac{\varphi_m'(\theta_1)}{\varphi_k'(\theta_1)}\big)-\ln\big(\frac{\varphi_m'(\theta_0)}{\varphi_k'(\theta_0)}\big)\big| \\
    &\ge\max_{\theta\in[0,1]}\big|\ln\big(\frac{\varphi_m'(\theta)}{\varphi_k'(\theta)}\big)\big|=\|\ln(\varphi_m')-\ln(\varphi_k')\|_{\infty}.
    \end{split}
\end{equation}
Since $\varphi'_m$ is bounded from above and away from zero for every $m\in\mathbb{N}$, we have $\|\ln\big(\varphi'_m\big)\|_\infty<\infty$. Thus, we conclude that the sequence $\big(\ln(\varphi_m')\big)_{m=1}^\infty$ is Cauchy as a sequence in $C\big([0,1]\big)$. \par
By completeness of $C\big([0,1]\big)$, $\big(\ln(\varphi_m')\big)_{n=1}^\infty$ is convergent, and by \Cref{lemma: exp Cauchy} we conclude that $(\varphi_m')_{m=1}^\infty$ is convergent in w.r.t.\ $\|\cdot\|_{\infty}$ as well. \par
Notice that for any $\theta\in[0,1]$ we have 
\[\big|\varphi_m(\theta)-\varphi_k(\theta)\big|=\big|\int_0^\theta\varphi_m'(t)-\varphi_k'(t)dt\big|\le\int_0^\theta\|\varphi_m'-\varphi_k'\|_{\infty}dt\le\|\varphi_m'-\varphi_k'\|_{\infty},\]
thus, $(\varphi_m)_{m=1}^\infty$ is Cauchy w.r.t.\ $\|\cdot\|_{\infty}$ and therefore w.r.t.\ $\|\cdot\|_{C^1}$ also. 
Denote $\varphi=\lim_{m\rightarrow\infty}\varphi_m$ the limit in $C^1\big([0,1]\big)$, and notice that \[\varphi_m(0)=0,\quad\varphi_m(1)=1\quad \forall m\in\mathbb{N}\]
implies that $\varphi(0)=0$ and $\varphi(1)=1$. 
Moreover, since $\big(\ln(\varphi_m')\big)_{m=1}^\infty$ is $\|\cdot\|_\infty$ convergent and, in particular, bounded, there is a uniform bound $0<\delta$ such that
\[0<\delta<\varphi_m'(\theta) \quad \forall m\in\mathbb{N}, \  \forall\theta\in[0,1].\]
Since $\varphi'=\lim_{m\rightarrow\infty}\varphi_m'$ with respect to $\|\cdot\|_\infty$, we conclude that also 
\[\varphi'(\theta)\ge\delta>0\quad \forall \theta\in[0,1],\]
thus showing that $\varphi$ is indeed a $C^1$-diffeomorphism of $[0,1]$ onto itself.
\end{proof}

To conclude, the metric completion space $\overline{\In}$ can be written as follows:
\begin{cor}
    \label{cor: final char}
    Let $n\ge2$, and let $G$ be a constant coefficient Riemannian metric as in \eqref{eq: type of metric} on $\In$ with $a_2>0$. Then 
    \[\overline{\In}=\In\cup\overline{\diffO},\]
    where $\overline{\diffO}$ is the closure of $\diffO$ defined in \Cref{cor: comp points}, with respect to the induced metric $\overline{d}_{G}$ on $\overline{\In}$. Moreover, 
    Under the identification 
    \[\diffO\leftrightarrow\diff,\]  defined in \Cref{cor: comp points}, if $\varphi_n\underset{\bar d_G}{\longrightarrow}\psi$ then $\varphi_n\underset{\|\cdot\|_{C^1}}{\longrightarrow}\psi$. Thus,
     $\overline{\diffO}$ corresponds to a set $D$ such that
    \begin{equation*}
        \diff\subseteq D\subseteq\mathcal{D}^1\big([0,1]\big).
    \end{equation*}
    
\end{cor}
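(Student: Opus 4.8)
The plan is to read the first equality $\overline{\In}=\In\cup\overline{\diffO}$ directly off \Cref{lem: comp char}, and then to extend the bijection $\varphi^0\leftrightarrow\varphi$ of \Cref{cor: comp points} from $\diffO$ to its $\bar d_G$-closure $\overline{\diffO}$, using the theorem immediately preceding this corollary as the engine. Before doing so I would note that, since the two connected components of $\In$ are at infinite $d_G$-distance, any $\bar d_G$-Cauchy sequence in $\diffO$ eventually lies in the image of a single component; so the whole argument can be run inside the endpoint-fixing component, the endpoint-switching one following verbatim from the isometry $v+\ell\varphi(\theta)\mapsto v+\ell\varphi(1-\theta)$.

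For the extension: given $\eta\in\overline{\diffO}$, by definition of the metric closure I would pick a sequence $(\varphi_m)\subset\diff$ with $\varphi_m^0\underset{\bar d_G}{\longrightarrow}\eta$. Being convergent, this sequence is $\bar d_G$-Cauchy, so the preceding theorem yields that $(\varphi_m)$ converges in $\|\cdot\|_{C^1}$ to some $C^1$-diffeomorphism $\varphi_\eta\in\mathcal{D}^1\big([0,1]\big)$. To see $\varphi_\eta$ depends only on $\eta$, I would interleave two approximating sequences, exactly as in the proof of \Cref{cor: rotation-translation invariance}: if $(\psi_m)\subset\diff$ also satisfies $\psi_m^0\to\eta$, then the interleaved sequence $\chi^0_{2m-1}\coloneqq\varphi_m^0$, $\chi^0_{2m}\coloneqq\psi_m^0$ still converges to $\eta$, hence is $\bar d_G$-Cauchy, so $(\chi_m)$ is $\|\cdot\|_{C^1}$-convergent, which forces $\lim_{C^1}\varphi_m=\lim_{C^1}\psi_m$. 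This produces a well-defined map $\Phi\colon\overline{\diffO}\to\mathcal{D}^1\big([0,1]\big)$, $\eta\mapsto\varphi_\eta$, and applying it to constant sequences shows $\Phi(\varphi^0)=\varphi$ for $\varphi\in\diff$, so $\Phi$ restricts on $\diffO$ to the identification of \Cref{cor: comp points}. The displayed implication of the corollary — that $\bar d_G$-convergence $\varphi_n\to\psi$ forces $\|\cdot\|_{C^1}$-convergence $\varphi_n\to\psi$ — then drops out: apply the preceding theorem to the convergent, hence Cauchy, sequence $(\varphi_n^0)$ and identify its $C^1$-limit with $\Phi(\psi)$, i.e.\ with $\psi$ under the identification.

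Finally I would set $D\coloneqq\Phi\big(\overline{\diffO}\big)$: then $\diff\subseteq D$ since $\Phi(\varphi^0)=\varphi$ for every $\varphi\in\diff$, and $D\subseteq\mathcal{D}^1\big([0,1]\big)$ since each value of $\Phi$ is a $C^1$-diffeomorphism of $[0,1]$ by the preceding theorem; this is the asserted sandwich $\diff\subseteq D\subseteq\mathcal{D}^1\big([0,1]\big)$.

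The proof itself is thus a short assembly, the only step needing care being the interleaving one, together with the harmless remark that a convergent sequence is Cauchy so the preceding theorem applies to the interleaved sequence. The real obstacle lies not in proving the corollary but in its scope: the argument only exhibits $\Phi$ as a surjection onto $D$, and I do not expect to obtain injectivity of $\Phi$ — equivalently that $\overline{\diffO}$ is \emph{exactly} $D$ — nor a sharper description of $D$, from the estimates available here. Settling injectivity would require an upper bound of the form $\bar d_G(\varphi^0,\psi^0)\lesssim\|\varphi-\psi\|_{C^1}$, dual to the lower bound of \Cref{lemma: dist-len ineq.}; and deciding whether $D=\diff$, $D=\mathcal{D}^1\big([0,1]\big)$, or something strictly in between, is left open. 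This is precisely the sense in which the characterization is ``nearly'' complete.
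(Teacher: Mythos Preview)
Your proposal is correct, and in fact the paper gives no explicit proof of this corollary at all: it is stated immediately after the preceding theorem as a summary, with the understanding that it follows directly from \Cref{lem: comp char} together with that theorem. Your assembly --- reading off the first equality from \Cref{lem: comp char}, invoking the preceding theorem on convergent (hence Cauchy) sequences, and using interleaving to make the assignment $\eta\mapsto\varphi_\eta$ well defined --- is exactly the intended argument, and your closing remarks on the non-injectivity of $\Phi$ and the indeterminacy of $D$ accurately identify why the characterization is only ``nearly'' complete.
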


\subsection{The Multidimensional Case: A Few Insights}
\label{subsec: extension to multi-dimensional case}
In this section, we return to the multi-dimensional case of $\Ind$ and provide another construction of a path that leaves the space in finite time. This construction is based on the combination of two types of paths with vanishing curve length: shortening and shrinking. \par
Shrinking refers to a path of the type 
\[(0,1]\ni t\mapsto tc(\theta)\qquad c\in\Ind,\]
while by shortening we mean a path of type 
\[(0,1]\ni t\mapsto c(t\theta)\qquad c\in \Ind \cap H^{n+1}\left(S^1,\mathbb{R}^d\right).\]
The reason we require $c\in \Ind \cap H^{n+1}\left(S^1,\mathbb{R}^d\right)$ in the case of shortening is the need for the velocity of curve $\gamma(t) = c(t\theta)$ to be in the correct tangent space.
Having $c\in H^{n+1}\left(S^1,\mathbb{R}^d\right)$ ensure that $\gamma_t\in H^{n}\big((0,1),\mathbb{R}^d\big)$ and thus ensures that $\gamma_t\in T_{\gamma(t)}\Ind$. 
Thus, although $c(t\theta)\in\mathcal{I}^{n+1}\big((0,1),\mathbb{R}^d\big)$ for every $t\in(0,1]$, we consider $t\mapsto c(t\theta)$ as a path in $\Ind$, and measure its length with respect to the corresponding Riemannian metric. \par
As \Cref{lemma:shrink cost} implies, the shrinking of any curve with non-zero curvature has infinite length w.r.t. a Riemannian metric $G$ of type \eqref{eq: type of metric}. In fact, for such a path $\gamma$ we have that the $L^2$-norm of the geodesic curvature satisfies $\|\nabla^2_{\partial_s}\gamma_t\|_{L^2(ds)}=\sqrt{G_{tc}(c,c)}\underset{t\rightarrow0}{\longrightarrow}\infty$. However, we observe that  $G_{c(t\theta)}\big(c(t\theta),c(t\theta)\big)\rightarrow0$ as $t\rightarrow0$, suggesting shrinking the initial curve $c(t\theta)$ becomes less costly as $t\rightarrow0$.\par

 \Cref{claim: Cauchy shortshrink} examines the relationship between shortening and shrinking, and provides a sufficient condition for a path that combines both shrinking and shortening to be of finite length. The Cauchy sequences that arise from this construction share the same limit point as a sequence of constant-speed parametrized straight line curves with vanishing length. \par
 Throughout this section, we use $x\lesssim_{a,b}y$ to indicate that there exists a constant $C>0$ dependent only on $a,b$ such that $x\le C y$. \par

% Generalized example of vanishing sequence
\begin{prop}
\label{claim: Cauchy shortshrink}
    Let $n\ge2$ and $0\le\alpha<\frac{1}{2n-3}$ , and let $c\in\Ind$ such that $c\in H^{n+1}\big((0,1),\mathbb{R}^d\big)$. Then the path 
    \begin{equation}
    \label{eq: shortening shrinking const}
        \begin{split}
            \gamma:(0,1]&\rightarrow\Ind \\
            t&\mapsto t^\alpha c(t\alpha)
        \end{split}
    \end{equation}
    has finite length w.r.t.\ the Riemannian metric\ $G$ on $\Ind$ of type \eqref{eq: type of metric}.
\end{prop}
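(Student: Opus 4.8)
The plan is a direct computation of $\len(\gamma)$, built on a scaling identity for the iterated arclength derivatives of $\gamma_t$ together with an absolute-continuity estimate at the top order that is exactly where the extra regularity $c\in H^{n+1}$ is spent. First I would record the elementary formulas and reduce. Writing $\gamma(t,\theta)=t^\alpha c(t\theta)$ one has $\partial_\theta\gamma(t,\theta)=t^{\alpha+1}c'(t\theta)$, so $|\partial_\theta\gamma(t,\theta)|=t^{\alpha+1}|c'(t\theta)|$ never vanishes and $\gamma(t)\in\Ind$; differentiating in $t$ gives
\[\gamma_t(\theta)=\alpha t^{\alpha-1}c(t\theta)+t^{\alpha}\theta c'(t\theta)=t^{\alpha-1}\,\tilde H(t\theta),\qquad \tilde H(u):=\alpha c(u)+u\,c'(u).\]
Since $c\in H^{n+1}$ we have $c'\in H^{n}$, hence $\tilde H\in H^{n}$ and $\gamma_t\in H^{n}=T_{\gamma(t)}\Ind$; together with the $C^1$-dependence of the dilation $t\mapsto c(t\cdot)$ into $H^n$ on $(0,1]$ (which again uses $c\in H^{n+1}$), this makes $\gamma$ a piecewise-$C^1$ path in $\Ind$, so $\len(\gamma)=\int_0^1\big(\sum_{i=0}^n a_i\|\nabla^i_{\partial_s}\gamma_t\|_{L^2(ds)}^2\big)^{1/2}dt\le\sum_{i=0}^n\sqrt{a_i}\int_0^1\|\nabla^i_{\partial_s}\gamma_t\|_{L^2(ds)}\,dt$, and it suffices to bound each term on the right.

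Second, I would prove the scaling identity: writing $\nabla^c_{\partial_s}:=\tfrac1{|c'|}\partial_u$ for arclength differentiation along $c$, an easy induction on $i$ (base case $\nabla_{\partial_s}=\tfrac1{t^{\alpha+1}|c'(t\theta)|}\partial_\theta$) gives $\nabla^i_{\partial_s}\big(\theta\mapsto F(t\theta)\big)=t^{-i\alpha}\big((\nabla^c_{\partial_s})^iF\big)(t\theta)$ for any vector field $F$ along $c$. Applying this to $\gamma_t=t^{\alpha-1}\tilde H(t\cdot)$ and substituting $u=t\theta$ in the resulting $L^2(ds)$-integral yields, for every $0\le i\le n$,
\[\|\nabla^i_{\partial_s}\gamma_t\|_{L^2(ds)}^2=t^{\,3\alpha-2-2i\alpha}\,\Phi_i(t),\qquad \Phi_i(t):=\int_0^t\big|(\nabla^c_{\partial_s})^i\tilde H\big|^2\,|c'|\,du.\]

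Third, I would establish $\Phi_i(t)\lesssim_c t$ for all $i\le n$ and conclude. For $i\le n-1$ this is immediate: from $\tilde H^{(j)}=(\alpha+j)c^{(j)}+u\,c^{(j+1)}$ and the fact that the coefficients of $(\nabla^c_{\partial_s})^i$ are built from $1/|c'|$ and its derivatives, $(\nabla^c_{\partial_s})^i\tilde H$ involves at most $i+1\le n$ derivatives of $c$, all continuous on $[0,1]$ since $c\in C^n$, so the integrand of $\Phi_i$ is bounded. For $i=n$ the only non-bounded contribution is the top-order term $\tfrac1{|c'|^n}\tilde H^{(n)}=\tfrac{\alpha+n}{|c'|^n}c^{(n)}+\tfrac{u}{|c'|^n}c^{(n+1)}$, whose last summand carries a factor $u$ and has $c^{(n+1)}\in L^2$; hence $\Phi_n(t)\le C_c\big(t+t^2\|c^{(n+1)}\|_{L^2}^2\big)\lesssim_c t$. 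Feeding $\Phi_i(t)\lesssim_c t$ into the identity above gives $\|\nabla^i_{\partial_s}\gamma_t\|_{L^2(ds)}\lesssim_c t^{(\alpha(3-2i)-1)/2}$, so
\[\len(\gamma)\ \lesssim_{c}\ \sum_{i=0}^n\int_0^1 t^{(\alpha(3-2i)-1)/2}\,dt ,\]
and the $i$-th integral converges iff $\alpha(2i-3)<1$, which imposes nothing for $i\le1$ and is most restrictive at $i=n$, where it reads exactly $\alpha<\tfrac1{2n-3}$ — the hypothesis.

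The main obstacle is the top-order estimate $\Phi_n(t)\lesssim_c t$: the crude bound $\Phi_n(t)\le\big\|\,|(\nabla^c_{\partial_s})^n\tilde H|^2|c'|\,\big\|_{L^1}$ only gives $\Phi_n(t)\to0$ with no rate, which is useless for integrability of the $t$-integral near $0$; one must genuinely exploit that the non-$L^\infty$ part of $(\nabla^c_{\partial_s})^n\tilde H$ is of the form $u\cdot(\text{an }L^2\text{ function})$, trading the missing $L^\infty$-bound on $c^{(n+1)}$ for one power of $t$. This is precisely the role of the hypothesis $c\in H^{n+1}$ (rather than merely $H^n$); a secondary, more routine point to verify carefully is the $C^1$-regularity of $t\mapsto\gamma(t)$ as a curve into $\Ind$ on $(0,1]$, i.e.\ the $C^1$-dependence of the dilation $t\mapsto c(t\cdot)$ with values in $H^n$.
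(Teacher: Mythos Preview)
Your argument is correct and follows the same overall route as the paper --- compute $\nabla^i_{\partial_s}\gamma_t$, extract the power of $t$, and check integrability near $0$ --- but with a cleaner packaging. The paper first reduces to constant-speed $c$ so that $\nabla^k_{\partial_s}$ becomes $(\ell_c t^{\alpha+1})^{-k}\partial_\theta^k$, then expands $\partial_\theta^k\gamma_t$ by the Leibniz rule into a $c^{(k)}$-term and a $\theta c^{(k+1)}$-term and bounds them separately, handling the case $k+1=n+1$ by a change of variable. You avoid the constant-speed reduction altogether (worth noting, since that reduction is not obviously justified by reparametrization invariance: reparametrizing $c$ does not produce a pointwise reparametrization of the path $t\mapsto t^\alpha c(t\cdot)$), instead proving the scaling identity $\nabla^i_{\partial_s}\big(F(t\cdot)\big)=t^{-i\alpha}\big((\nabla^c_{\partial_s})^iF\big)(t\cdot)$ for general $c$ and consolidating all estimates into the single bound $\Phi_i(t)\lesssim_c t$. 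Both proofs rest on the same top-order observation: the only contribution not controlled in $L^\infty$ carries the factor $u=t\theta$ in front of $c^{(n+1)}\in L^2$, converting the missing derivative into a power of $t$ --- precisely the mechanism you isolate as the main obstacle.
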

\begin{proof}
    By reparametrization invariance, we may assume the $c$ is constant-speed. In this case, the partial derivatives of $\gamma$ are 
    \begin{equation*}
        \begin{split}
            \gamma_t(t,\theta)&=\alpha t^{\alpha-1}c(t\theta)+t^\alpha\theta \cdot c'(t\theta)\\
            \gamma_\theta(t,\theta)&=t^{\alpha+1} c'(t\theta),
        \end{split}
    \end{equation*}
    particularly, $|\gamma_\theta|\equiv \ell_ct^{\alpha+1}$ is independent of $\theta$. By the Leibnitz rule, 
    \begin{equation*}
        \begin{split}
        \nabla^k_{\partial_s}\gamma_t(t,\theta)= &\frac{1}{(\ell_c t^{\alpha+1})^k}\nabla^k_{\partial_\theta}\gamma_t(t,\theta)\\
        =&\frac{1}{(\ell_c t^{\alpha+1})^k}\bigg[\big(\alpha +k\big) t^{\alpha-1+k}c^{(k)}(t\theta)+\theta t^{\alpha+k}c^{(k+1)}(t\theta)\bigg],
        \end{split}
    \end{equation*}
    where $c^{(k)}$ is the standard $k$-th derivative of $c$ with respect to $\theta$.
By triangle inequality and the fact that $\theta\le1$, we obtain
\[\|\nabla^k_{\partial_s}\gamma_t\|_{L^2(ds)}^2\lesssim_{\ell_c,k,\alpha} \frac{t^{2\alpha}}{t^{2\alpha k+2}}\|c^{(k)}(t\theta)\sqrt{t^{\alpha+1}}\|_{L^2(d\theta)}^2+\frac{t^{2\alpha}}{t^{2\alpha k}}\| c^{(k+1)}(t\theta)\sqrt{t^{\alpha+1}}\|^2_{L^2(d\theta)}.\]
The term $\sqrt{t^{\alpha+1}}$  appears due to the transition from  $L^2(ds)$ to $L^2(d\theta)$.
Thus,
\begin{equation*}
    \begin{split}
    \label{eq: integral fin}
    \len(\gamma)=&\intop_0^1\sqrt{\sum_{0\le k\le n}a_k\|\nabla^k_{\partial_s}\gamma_t\|_{L^2(ds)}^2}dt\\
    \lesssim&_{\ell_c,n,a_0,..,a_n} \intop_0^1\sqrt{\sum_{0\le k\le n}\bigg(\frac{t^{2\alpha}}{t^{2\alpha k +2}}\|c^{(k)}\sqrt{t^{\alpha+1}}\|_{L^2(d\theta)}^2+\frac{t^{2\alpha}}{t^{2\alpha k}}\|c^{(k+1)}\sqrt{t^{\alpha+1}
    }\|_{L^2(d\theta)}^2\bigg)}dt.
    \end{split}
\end{equation*}
For $i<n+1$ we have $\|c^{(i)}\|_{L^\infty}<\infty$, and, by Hölder inequality,
\[\|c^{(i)}(t\theta)\sqrt{t^{\alpha+1}}\|^2_{L^2(d\theta)}\le\|c^{(i)}\|^2_{L^\infty}\cdot\|\sqrt{t^{\alpha+1}}\|^2_{L^1(d\theta)}=t^{\alpha+1}\|c^{(i)}\|^2_{L^\infty}.\]
While for $i=n+1$, by change of variable,
\begin{equation*}
    \begin{split}
        \|c^{(n+1)}(t\theta)\sqrt{t^{\alpha+1}}\|^2_{L^2(dt\theta)}=&\int_0^1\big|c^{(n+1)}(t\theta)\big|^2t^{\alpha+1}d\theta\\
        =&t^\alpha\int_0^t\big|c^{(n+1)}(\theta)\big|^2d\theta\le t^\alpha\|c^{(n+1)}\|^2_{L^2(d\theta)}.
    \end{split}
\end{equation*}
The inequalities above show that
\[\len(\gamma)\lesssim_{c,n,a_0,...,a_n}\int_0^1\sqrt{\sum_{0\le k\le n}\bigg(\frac{t^{3\alpha+1}}{t^{2\alpha k +2}}+\frac{t^{3\alpha}}{t^{2\alpha k}}\bigg)}dt.\]
By equivalence of the Euclidean $1$-norm and $2$-norm on $\mathbb{R}^{2n}$, it is enough to ensure that
\[\int_0^1\big(t^{3\alpha-2\alpha k-1}\big)^\frac{1}{2}dt<\infty ,\quad\ \int_0^1\big(t^{3\alpha -2\alpha k}\big)^\frac{1}{2}dt<\infty, \qquad  k=0,...,n \]
 to guarantee that $\len(\gamma)<\infty$. Notice the conditions above are satisfied by any $0\le \alpha<\frac{1}{2n-3}$.
\end{proof}
The bound $\alpha<\frac{1}{2n-3}$ in \Cref{claim: Cauchy shortshrink} is sharp, as demonstrated by the following example. Consider $\mathcal{I}^2\left([0,1],\mathbb{R}^2\right)$ endowed with the Riemannian metric \[G_c(h,k)=\sum_{i=0}^2\langle \nabla^i_{\partial_s}h,\nabla^i_{\partial_s}k\rangle_{L^2(ds)},\qquad h,k\in T_c\mathcal{I}^2\left([0,1],\mathbb{R}^2\right),\]
and the path 
\[(0,1]\ni t\mapsto t\big(\cos(t\theta),\sin(t\theta)\big).\]
In this case, $c(\theta)=\big(cos(\theta),\sin(\theta)\big)$ is a smooth curve and $\alpha=1=\frac{1}{4-3}$, yet the length of the path is infinite by a direct calculation. \par
\Cref{claim: Cauchy shortshrink} implies that if $c\in\mathcal{I}^{n+1}\big([0,1],\mathbb{R}^d\big)$, any sequence of the form $\big(t_m^\alpha c(t_m\theta)\big)_{m=1}^\infty$, where $\mathbb{R}_+\ni t_m\rightarrow0$, is a diverging Cauchy sequence in $\Ind$.
For the particular case of $\alpha=0$, we are able to relax the requirement of higher regularity and show the following:

\begin{lemma}
\label{lem: shrort-stright line}
    Let $n\ge2$ and $c\in\Ind$, then 
    \[d_G\big(c(t\theta),tc'(0)\theta\big)\underset{t\rightarrow0}{\longrightarrow}0,\]
    where $tc'(0)\theta$ is a constant-speed straight-line segment curve in the direction of $c'(0)$, and $G$ is a Riemannian metric on $\Ind$ of type \eqref{eq: type of metric}.\par
    In particular, a sequence of the form $\big(c(t_m\theta)\big)_{m=1}^\infty$, where $\mathbb{R}_+\ni t_m\rightarrow0$, converges to the same limit point in $\overline{\Ind}$ as a sequence of a constant-speed vanishing-length straight-line segment curves.
\end{lemma}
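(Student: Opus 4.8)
The plan is to connect $c(t\theta)$ to the straight segment $tc'(0)\theta$ by the explicit linear homotopy
\[
\gamma^{(t)}(r,\theta)\coloneqq (1-r)\,c(t\theta)+r\,t\,c'(0)\,\theta,\qquad r\in[0,1],
\]
and to show that $\len\big(\gamma^{(t)}\big)\to 0$ as $t\to 0$; since $\gamma^{(t)}(0)=c(t\theta)$ and $\gamma^{(t)}(1)=tc'(0)\theta$, this yields $d_G\big(c(t\theta),tc'(0)\theta\big)\le \len(\gamma^{(t)})\to 0$. First I would check that $\gamma^{(t)}$ is an admissible path: its $\theta$-derivative is $\partial_\theta\gamma^{(t)}=t\big[(1-r)c'(t\theta)+rc'(0)\big]$, and since $c'\in C^0([0,1],\mathbb R^d)$ with $m\coloneqq|c'(0)|>0$, for all sufficiently small $t$ (depending only on $c$) the convex combination $(1-r)c'(t\theta)+rc'(0)$ has Euclidean norm in $[m/2,2m]$ uniformly in $r,\theta$, so each $\gamma^{(t)}(r)$ is an immersion and lies in $H^n$. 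The $r$-velocity $\eta_t\coloneqq\partial_r\gamma^{(t)}=tc'(0)\theta-c(t\theta)$ is independent of $r$, with $\eta_t(0)=-c(0)$, $\eta_t'(\theta)=-t\big(c'(t\theta)-c'(0)\big)$ and $\eta_t^{(k)}(\theta)=-t^{k}c^{(k)}(t\theta)$ for $2\le k\le n$. Thus $\len(\gamma^{(t)})=\int_0^1\sqrt{\sum_{i=0}^n a_i\|\nabla_{\partial_s}^i\eta_t\|_{L^2(ds)}^2}\,dr$, with $\nabla_{\partial_s}$ and $ds$ taken with respect to $\gamma^{(t)}(r)$, and it suffices to show that for each $0\le i\le n$ one has $\|\nabla_{\partial_s}^i\eta_t\|_{L^2(ds)}\to 0$ as $t\to 0$, uniformly in $r\in[0,1]$.

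For the estimate write $ds=|\partial_\theta\gamma^{(t)}|\,d\theta=t|w_r|\,d\theta$ with $w_r\coloneqq(1-r)c'(t\theta)+rc'(0)$, so that $\ell_{\gamma^{(t)}(r)}=\int_0^1 t|w_r|\,d\theta\le 2mt$ and $|\partial_\theta\gamma^{(t)}|^{-1}\le \tfrac{2}{m}\,t^{-1}$. The cases $i=0,1$ are immediate from $\|\eta_t\|_{L^2(ds)}^2\le\|\eta_t\|_\infty^2\,\ell_{\gamma^{(t)}(r)}$ and $\|\nabla_{\partial_s}\eta_t\|_{L^2(ds)}^2\le\|\nabla_{\partial_s}\eta_t\|_\infty^2\,\ell_{\gamma^{(t)}(r)}$, together with $\|\eta_t\|_\infty\le |c(0)|+t\,\omega(t)$ and $\|\nabla_{\partial_s}\eta_t\|_\infty\le \tfrac{2}{m}t^{-1}\|\eta_t'\|_\infty\le \tfrac{2}{m}\omega(t)$, where $\omega(t)\coloneqq\sup_{0\le u\le t}|c'(u)-c'(0)|\to 0$; both right-hand sides vanish because $\ell_{\gamma^{(t)}(r)}\le 2mt\to 0$. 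For $2\le i\le n$ I would expand $\nabla_{\partial_s}^i\eta_t=\big(\tfrac{1}{|\partial_\theta\gamma^{(t)}|}\partial_\theta\big)^i\eta_t$ into a finite sum (the number of terms depending only on $i$) of terms $T$ that are $t^{-i}$ times a product of: one factor $\eta_t^{(a)}$ with $1\le a\le i$ (the innermost $\partial_\theta$ necessarily hits $\eta_t$, so $a\ge 1$); factors $\partial_\theta^{b_l}w_r=(1-r)t^{b_l}c^{(b_l+1)}(t\theta)$ with $b_l\ge 1$; and factors that are uniformly bounded rational expressions in $w_r$ (denominators being powers of $|w_r|^2\ge(m/2)^2$), where the $i$ total $\theta$-derivatives are distributed among $a$ and the $b_l$'s, i.e.\ $a+\sum_l b_l=i$. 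Substituting these formulas, all powers of $t$ cancel, so $T$ equals a uniformly bounded quantity times a product of factors $c^{(j)}(t\theta)$ with $1\le j\le i\le n$ (with possibly one factor $c'(t\theta)-c'(0)$ of sup-norm $\le\omega(t)$ in place of $\eta_t'$); a short case check using $a\ge1$, $b_l\ge1$, $a+\sum b_l=i\le n$ shows that at most one of these orders $j$ can equal $n$. If all orders are $\le n-1$, then each such $c^{(j)}$ is continuous, so $\|T\|_\infty\lesssim 1$ (or $\lesssim\omega(t)$), whence $\|T\|_{L^2(ds)}^2\le\|T\|_\infty^2\,\ell_{\gamma^{(t)}(r)}\lesssim t\to 0$. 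If one factor is $c^{(n)}(t\theta)$, which by the above can occur only for $i=n$, then bounding the remaining factors in $L^\infty$,
\[
\|T\|_{L^2(ds)}^2=\int_0^1 |T|^2\, t|w_r|\,d\theta \lesssim t\int_0^1 |c^{(n)}(t\theta)|^2\,d\theta=\int_0^t |c^{(n)}(u)|^2\,du\xrightarrow[t\to 0]{}0,
\]
by absolute continuity of the Lebesgue integral, since $c^{(n)}\in L^2(0,1)$. Summing the finitely many terms gives $\|\nabla_{\partial_s}^i\eta_t\|_{L^2(ds)}\to 0$ uniformly in $r$, hence $\len(\gamma^{(t)})\to 0$, proving the first assertion.

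For the ``in particular'' statement, $(t_m c'(0)\theta)_{m=1}^\infty$ is a constant-speed straight-line segment sequence of vanishing length $t_m|c'(0)|\to 0$ (contained in a single connected component when $d=1$, the one determined by $\mathrm{sgn}(c'(0))$), hence a divergent Cauchy sequence by \Cref{claim: straight-line Cauchy}; combined with $d_G\big(c(t_m\theta),t_m c'(0)\theta\big)\to 0$ and the triangle inequality, this shows that $(c(t_m\theta))_{m=1}^\infty$ is itself Cauchy and shares the limit point of $(t_m c'(0)\theta)_{m=1}^\infty$ in $\overline{\Ind}$, which by \Cref{cor: rotation-translation invariance} is the common limit point of all constant-speed vanishing-length straight-line segment sequences (in that component). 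The main obstacle is the single $i=n$ term in the expansion of $\nabla_{\partial_s}^n\eta_t$: there $c^{(n)}$ lies only in $L^2$, and $\|c^{(n)}(t\,\cdot)\|_{L^2(0,1)}$ need not even remain bounded as $t\to 0$. What rescues the argument is that passing from $L^2(d\theta)$ to $L^2(ds)$ inserts exactly one factor $|\partial_\theta\gamma^{(t)}|\asymp t$, which precisely cancels the $t^{-1}$ in $\|c^{(n)}(t\,\cdot)\|_{L^2(0,1)}^2=t^{-1}\|c^{(n)}\|_{L^2(0,t)}^2$ and reduces the bound to $\|c^{(n)}\|_{L^2(0,t)}^2\to 0$. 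A secondary point requiring care is the $t$-homogeneity bookkeeping for the arc-length derivative expansion, in particular verifying that no derivative of $c$ of order exceeding $n$ is ever produced.
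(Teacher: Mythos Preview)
Your proposal is correct and follows essentially the same route as the paper: connect $c(t\theta)$ to $tc'(0)\theta$ by the linear interpolation, verify it stays in $\Ind$ for small $t$, expand $\nabla_{\partial_s}^i$ of the ($r$-independent) velocity, observe that the $t$-powers from $\eta_t^{(a)}$ and $\partial_\theta^{b_l}w_r$ exactly cancel the $t^{-i}$ from the arc-length derivatives, and then split into the cases where all $c^{(j)}$ factors are continuous versus the single top-order case, where the extra factor $t$ from $ds$ turns $\int_0^1|c^{(n)}(t\theta)|^2\,d\theta$ into $\int_0^t|c^{(n)}|^2\to 0$. Your write-up is in fact more careful than the paper's on two points: the combinatorial check that at most one factor $c^{(j)}$ with $j=n$ can occur (via $a\ge1$, $b_l\ge1$, $a+\sum b_l=i\le n$), and the explicit derivation of the ``in particular'' clause from \Cref{claim: straight-line Cauchy} and \Cref{cor: rotation-translation invariance}.
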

\begin{proof}
    We assume that $c$ is a constant-speed parametrized curve, and
    notice that for some $\delta>0$ small enough we have $|c'(\theta)-c'(0)|<\frac{\ell_c}{2}$ for all $\theta<\delta$. For $t<\delta$, we define the linear interpolation path $\gamma\in\Gamma\big(c(t\theta),tc'(0)\theta\big)$ by
    \[[0,1]\ni \tau\mapsto\tau c(t\theta)+(1-\tau)\big(tc'(0)\theta\big).\]
    Notice \[|\gamma_\theta(\tau,\theta)|=t\big|c'(0)+\tau\big(c'(t\theta)-c'(0)
    \big)\big|>t\frac{\ell_c}{2},\quad  \forall\tau\in[0,1], \ \forall\theta\in[0,1],\]
    thus $\gamma$ is well defined for $t<\delta$. \par
    The partial derivative of $\gamma$ with respect to $\tau$ is 
    $\gamma_\tau(\tau,\theta)=c(t\theta)-t\theta c'(0)$.
    It can be verified by writing $\nabla^k_{\partial_s}\gamma_\tau$ explicitly by Faà di Bruno's formula, that, $\nabla^k_{\partial_s}\gamma_\tau$ for $k<n$ consists of a sum of products of $\frac{1}{|c'(0)+\tau(c'(t\theta)-c'(0))|}$ and of derivatives $c^{(i)}$ for $i<k$. Since  $\frac{1}{|c'(0)+\tau(c'(t\theta)-c'(0))|}>\frac{\ell_c}{2}$ and $\|c^{(i)}\|_{L^\infty}<\infty$ for $i=0,..,k$, we conclude
    \[\|\nabla^k_{\partial_s}\gamma_\tau\|_{L^\infty}\le C_1 \qquad k=0,..,n-1,\]
    where $C_1>0$ is a constant independent of $t$ and $\tau$.\par
    For $k=n$, $\nabla^n_{\partial_s}\gamma_\tau$ consists of $\frac{c^{(n)}(t\theta)}{|c'(0)+\tau(c'(t\theta)-c'(0))|^n}$ and a sum of products of $\frac{1}{|c'(0)+\tau(c'(t\theta)-c'(0))|}$ and of derivatives $c^{(i)}$ where $i<n$. By a similar argument and triangle inequality we have 
    \[\|\nabla^n_{\partial_s}\gamma_\tau\|^2_{L^2(ds)}\le 4\Big(\|\frac{c^{(n)}(t\theta)}{|c'(0)+\tau(c'(t\theta)-c'(0))|^n}\|^2_{L^2(ds)}+\|C_2\|^2_{L^2(ds)}\Big)\]
    where $C_2>0$ is again a constant independent of $\tau$ and $t$.
    Similarly to the proof of \Cref{claim: Cauchy shortshrink}, we obtain for $C=\max(C_1,C_2)$
    \begin{equation*}
        \begin{split}
            \|\nabla^k_{\partial_s}\gamma_\tau\|^2_{L^2(ds)}&\lesssim_{C,\ell_c} t,\qquad\qquad k=0,..,n-1\\
            \|\nabla^n_{\partial_s}\gamma_\tau\|^2_{L^2(ds)}&\lesssim_{C,\ell_c} \int_0^t|c^{(n)}(\theta)|^2d\theta+t.
        \end{split}
    \end{equation*}
    Using the equivalence of norms on $\mathbb{R}^n$, we conclude:
    \begin{equation*}
        \begin{split}
            \len(\gamma)=&\int_0^1\sqrt{\sum_{0\le k\le n}a_k \|\nabla^k_{\partial_s}\gamma_\tau\|^2_{L^2(ds)}}d\tau\\
            \lesssim_{C,n,a_0,...,a_n}&\int_0^1\Big(\int_0^t|c^{(n)}(\theta)|^2d\theta\Big)^{\nicefrac{1}{2}}d\tau+\int_0^1\sqrt td\tau\underset{t\rightarrow0}{\longrightarrow}0.
        \end{split}
    \end{equation*}
\end{proof}

The following lemma implies that the Cauchy sequences induced by 
\Cref{claim: Cauchy shortshrink} and by \Cref{lem: shrort-stright line} share the same limit point in $\overline{\Ind}$:

\begin{lemma}
\label{lem: short- shrinkshort}
    Let $n\ge2$ and $0\le\alpha<\frac{1}{2n-3}$, and let $c\in\Ind$ such that also $c\in H^{n+1}\big((0,1),\mathbb{R}^d\big)$. Then
    \[d_G\big(c(t\theta),t^\alpha c(t\theta)\big)\underset{t\rightarrow0}{\longrightarrow}0,\]
    w.r.t.\ the Riemannian metric $G$ on $\Ind$ of type \eqref{eq: type of metric}.
\end{lemma}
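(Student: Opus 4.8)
The plan is to connect the two curves by the explicit path that rescales $c(t\theta)$ linearly in a scaling parameter $\lambda$, running $\lambda$ from $t^{\alpha}$ up to $1$, and to bound its $G$-length directly, as in the proof of \Cref{lemma:shrink cost} but with $\lambda$ kept bounded away from $0$ and with the \emph{short} curve $c(t\theta)$ in place of $c$. The point of not shrinking all the way to length $0$ is that the divergent $\lambda^{-(2i-1)/2}$ contributions of the higher-order terms are then integrated only over $[t^{\alpha},1]$, which keeps them finite, while the factor $t^{\alpha+1}$-smallness of $c(t\theta)$ and its derivatives supplies enough extra decay.

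First I would use reparametrization invariance of $d_G$ (\Cref{cor: dist inv}) to assume $c$ is constant-speed, so that $|c'|\equiv\ell_c$ and $c^{(k)}\in L^{\infty}$ for $k=0,\dots,n$ — the case $k=n$ being exactly where the hypothesis $c\in H^{n+1}\big((0,1),\mathbb{R}^d\big)$ is used. Writing $c_t(\theta)\coloneqq c(t\theta)$, one has $\partial_\theta^{k}c_t(\theta)=t^{k}c^{(k)}(t\theta)$, $|c_t'|\equiv t\ell_c$, and $\ell_{c_t}=t\ell_c$. Define
\[
\gamma^{(t)}\colon[t^{\alpha},1]\to\Ind,\qquad \gamma^{(t)}(\lambda)\coloneqq\lambda\, c_t,
\]
which joins $t^{\alpha}c(t\theta)$ to $c(t\theta)$, is a path of immersions (since $\lambda\ge t^{\alpha}>0$), and has velocity $\gamma^{(t)}_\lambda\equiv c_t\in H^{n}\big((0,1),\mathbb{R}^d\big)=T_{\lambda c_t}\Ind$, so it is admissible and $d_G\big(c(t\theta),t^{\alpha}c(t\theta)\big)\le\len(\gamma^{(t)})$.

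Next I would estimate the integrand. Since $\gamma^{(t)}_\lambda=c_t$ is independent of $\lambda$ and the base curve at parameter $\lambda$ is $\lambda c_t$, of constant speed $\lambda t\ell_c$, we get $\nabla_{\partial_s}^{k}\gamma^{(t)}_\lambda=(\lambda t\ell_c)^{-k}\partial_\theta^{k}c_t=(\lambda\ell_c)^{-k}c^{(k)}(t\,\cdot\,)$, so
\[
\|\nabla_{\partial_s}^{k}\gamma^{(t)}_\lambda\|_{L^{2}(ds)}^{2}=\frac{\lambda t\ell_c}{(\lambda\ell_c)^{2k}}\intop_0^1\big|c^{(k)}(t\theta)\big|^{2}\,d\theta=\frac{1}{\lambda^{2k-1}\ell_c^{2k-1}}\intop_0^{t}\big|c^{(k)}(\sigma)\big|^{2}\,d\sigma\ \lesssim_{c,k}\ \frac{t}{\lambda^{2k-1}},
\]
using the substitution $\sigma=t\theta$ and the bound $\int_0^{t}|c^{(k)}|^{2}\le t\,\|c^{(k)}\|_{L^{\infty}}^{2}$ for $k=0,\dots,n$. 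Summing over $k$ and using $\lambda^{-(2k-1)}\le\lambda^{-(2n-1)}$ for $\lambda\le1$ and $k\le n$, this gives $G_{\lambda c_t}\big(\gamma^{(t)}_\lambda,\gamma^{(t)}_\lambda\big)\lesssim_{c,n,a_0,\dots,a_n}t\,\lambda^{-(2n-1)}$. Since $n\ge2$ the exponent satisfies $-n+\tfrac12\le-\tfrac32<-1$, hence
\[
\len(\gamma^{(t)})\lesssim_{c,n,a_0,\dots,a_n}\sqrt{t}\intop_{t^{\alpha}}^{1}\lambda^{-n+1/2}\,d\lambda\ \lesssim_{n}\ \sqrt{t}\,\big(t^{\alpha}\big)^{-n+3/2}=t^{\,\frac12-\alpha\left(n-\frac32\right)},
\]
and the exponent $\tfrac12-\alpha(n-\tfrac32)$ is strictly positive precisely because $\alpha<\tfrac1{2n-3}$; so the right-hand side tends to $0$ as $t\to0$, which is the claim. (For $\alpha=0$ the path is constant and there is nothing to prove.)

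The computation is routine and closely parallels \Cref{lemma:shrink cost}; the only step requiring care is the top-order term $k=n$, where bounding $\int_0^{t}|c^{(n)}|^{2}$ merely by $\|c^{(n)}\|_{L^{2}}^{2}$ would lose the crucial factor $t$ and leave a contribution $\sim t^{-\alpha(n-3/2)}$ that does not vanish. It is exactly the extra regularity $c\in H^{n+1}$ — giving $c^{(n)}\in L^{\infty}$, hence $\int_0^{t}|c^{(n)}|^{2}\lesssim t$ — that makes the bound go to $0$, and this is also what pins down the sharp threshold $\alpha<\tfrac1{2n-3}$.
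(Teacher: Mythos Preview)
Your proof is correct and uses the same connecting path as the paper --- scaling $c(t\theta)$ from factor $t^{\alpha}$ to $1$ --- but you parametrize it linearly in the scale $\lambda\in[t^{\alpha},1]$, whereas the paper parametrizes the same path exponentially via $\tau\mapsto t^{\alpha\tau}$, $\tau\in[0,1]$. Since length is reparametrization-invariant the two computations are equivalent; your choice avoids the extraneous $\ln(t)$ factors that appear in the paper's version and makes the final power-counting $t^{1/2-\alpha(n-3/2)}$ immediate.
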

\begin{proof}
    We assume $c$ is 
  constant-speed parametrized. Fix $t>0$, and consider the path $\gamma\in\Gamma\big(c(t\theta),t^\alpha c(t\theta)\big)$ defined by
    \[[0,1]\ni\tau\mapsto t^{\alpha\tau}c(t\theta).\]
    The partial derivatives along $\gamma$ are 
    \begin{equation*}
        \begin{split}
            \gamma_\tau(\tau,\theta)&=\ln(t)\alpha t^{\alpha\tau}c(t\theta)\\
            \gamma_\theta(\tau,\theta)&=t^{\alpha\tau+1}c(t\theta)
        \end{split}
    \end{equation*}
    and thus $|\gamma_\theta|\equiv \ell_c t^{\alpha\tau+1}$ is independent of $\theta$, and $\nabla^k_{\partial_s}\gamma_\tau=\frac{1}{\ell_c^kt^{(\alpha\tau+1)k}}\nabla^k_{\partial_\theta}\gamma_\tau$.
    Since $c\in H^{n+1}\big([0,1],\mathbb{R}^d\big)$, we can conclude, similarly to the proof of \Cref{claim: Cauchy shortshrink}, that 
        \[ \|\nabla^k_{\partial_s}\gamma_\tau\|^2_{L^2(ds)}\lesssim_{\ell_c,\alpha,n} \frac{\ln^2(t)}{t^{(2k-3)\alpha\tau-1}}\|c^{(k)}\|_{L^\infty}, \qquad k=0,...,n.
        \]
    Thus,
    \begin{equation*}
        \begin{split}
            \len(\gamma)=\intop_0^1\sqrt{{\sum_{0\le k\le n}}a_k\|\nabla^k_{\partial_s}\gamma_\tau\|^2_{L^2(ds)}}d\tau& \lesssim_{\alpha,c,n,a_0,...,a_n}\intop_0^1\sqrt{\sum_{0\le k \le n}\frac{\ln^2(t)}{t^{(2k-3)\alpha\tau-1}}}d\tau \\
            &\lesssim_n\sum_{0\le k\le n}\intop_0^1|\ln(t)|t^{\frac{1}{2}-\frac{(2k-3)}{2}\alpha\tau}d
            \tau,
        \end{split}
    \end{equation*}
    where we again used the equivalence of norms of norms on $\mathbb{R}^n$.
    Notice 
    \[\int_0^1-\ln(t)t^{\frac{1}{2}-\frac{2k-3}{2}\alpha\tau}d\tau=\frac{2}{\alpha(3-2k)}t^{\frac{1}{2}-\frac{2k-3}{2}\alpha\tau}\bigg|_{\tau=0}^1\] 
    and since $0\le \alpha<\frac{1}{2n-3}$ we have 
    \[\frac{2}{\alpha(3-2k)}\Big(t^{\frac{1}{2}}-t^{\frac{1}{2}-\frac{2k-3}{2}\alpha}\Big)\underset{t\rightarrow0}{\longrightarrow}0.\]
\end{proof}

The combination of \Cref{claim: Cauchy shortshrink},  \Cref{lem: shrort-stright line}, and \Cref{lem: short- shrinkshort} implies:
\begin{cor}
\label{cor: shrinkshort limit point}
    Let $n\ge2$ and $0\le \alpha<\frac{1}{2n-3}$, and let $c\in\Ind$ such that $c\in H^{n+1}\big(0,1),\mathbb{R}^d\big)$. Then any sequence of the form $t_m^\alpha c(t_m\theta)$ where $t_m\rightarrow0$, converges in $\overline{\Ind}$, w.r.t\ the geodesic distance induced by a Riemannian metric of type \eqref{eq: type of metric}, to the same limit point as a vanishing-length constant speed straight-line segment curve sequence.
\end{cor}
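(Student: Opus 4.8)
The plan is to chain the three preceding results together using the triangle inequality in the metric completion, in the same spirit as the proof of \Cref{cor: rotation-translation invariance}. First I would fix $c$, $\alpha$, and a sequence $t_m\searrow 0$ as in the statement, and verify that the three sequences in play are all Cauchy in $\left(\Ind,d_G\right)$, so that each determines a point of $\overline{\Ind}$. Writing $\gamma$ for the path $t\mapsto t^\alpha c(t\theta)$, \Cref{claim: Cauchy shortshrink} gives $\len(\gamma)<\infty$, hence $d_G\big(t_k^\alpha c(t_k\theta),t_m^\alpha c(t_m\theta)\big)\le\len\big(\gamma|_{(0,\max(t_k,t_m)]}\big)\underset{\max(t_k,t_m)\to 0}{\longrightarrow}0$, so $\left(t_m^\alpha c(t_m\theta)\right)_{m=1}^\infty$ is Cauchy; the same argument with $\alpha=0$ (permitted in \Cref{claim: Cauchy shortshrink}) shows $\left(c(t_m\theta)\right)_{m=1}^\infty$ is Cauchy; and $\left(t_m c'(0)\theta\right)_{m=1}^\infty$ is a vanishing-length constant-speed straight-line segment curve sequence, hence a divergent Cauchy sequence by \Cref{claim: straight-line Cauchy} (when $d=1$ it lies in the connected component determined by $\text{sgn}\,c'(0)$).

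Next I would use the elementary fact that two Cauchy sequences $(a_m),(b_m)$ in a metric space with $d(a_m,b_m)\to 0$ define the same point of the completion: the interleaved sequence $a_1,b_1,a_2,b_2,\dots$ is then itself Cauchy, so both subsequences share its limit. Applying this with $a_m=t_m^\alpha c(t_m\theta)$ and $b_m=c(t_m\theta)$, \Cref{lem: short- shrinkshort} (which gives $d_G\big(c(t_m\theta),t_m^\alpha c(t_m\theta)\big)\to 0$) shows that $\left(t_m^\alpha c(t_m\theta)\right)_{m=1}^\infty$ and $\left(c(t_m\theta)\right)_{m=1}^\infty$ have the same limit in $\overline{\Ind}$; applying it with $a_m=c(t_m\theta)$ and $b_m=t_m c'(0)\theta$, \Cref{lem: shrort-stright line} shows $\left(c(t_m\theta)\right)_{m=1}^\infty$ and $\left(t_m c'(0)\theta\right)_{m=1}^\infty$ have the same limit. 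Chaining the two equalities, $\left(t_m^\alpha c(t_m\theta)\right)_{m=1}^\infty$ converges in $\overline{\Ind}$ to the limit point of the straight-line segment sequence $\left(t_m c'(0)\theta\right)_{m=1}^\infty$.

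Finally I would invoke \Cref{cor: rotation-translation invariance} to note that this limit point is independent of the choice of direction, starting point, and lengths of the straight-line sequence, so it is precisely the common limit point of vanishing-length constant-speed straight-line segment curve sequences (within the relevant connected component, when $d=1$), which is the assertion. The argument is essentially bookkeeping, and the one point that needs care is the $d=1$ case: one must ensure that the comparison sequence $\left(t_m c'(0)\theta\right)_{m=1}^\infty$ lies in the same connected component as $\left(c(t_m\theta)\right)_{m=1}^\infty$, which holds because $c'$ is continuous and nonvanishing on $[0,1]$, hence of constant sign equal to that of $c'(0)$. I do not expect any deeper obstacle beyond assembling these pieces.
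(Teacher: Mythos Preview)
Your proposal is correct and follows exactly the approach the paper intends: the corollary is stated there as an immediate consequence of \Cref{claim: Cauchy shortshrink}, \Cref{lem: shrort-stright line}, and \Cref{lem: short- shrinkshort}, with no further proof given. You have simply spelled out the chaining via the triangle inequality and the connected-component check for $d=1$, which is precisely the intended bookkeeping.
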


In addition, \Cref{claim: Cauchy shortshrink} and \Cref{cor: shrinkshort limit point} also imply:
\begin{cor}
\label{cor: single con com}
    Let $n\ge2$ and $d\ge2$, then $\Ind$ consists of a single connected component. 
\end{cor}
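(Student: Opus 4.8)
The plan is to prove that $\Ind$ is path-connected, which for the open subset $\Ind$ of a Hilbert space is equivalent to being connected. Concretely, I would fix the reference curve $c_0\colon\theta\mapsto e_1\theta$ and show that an arbitrary $c\in\Ind$ can be joined to $c_0$ by a finite concatenation of piecewise $C^1$ paths inside $\Ind$; since $c$ is arbitrary this forces $\Ind$ to be connected.

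First I would regularize $c$. Since $H^{n+1}\big((0,1),\mathbb{R}^d\big)$ is dense in $H^n\big((0,1),\mathbb{R}^d\big)$ and, by \Cref{prop:openess}, $\Ind$ contains an $H^n$-ball $B$ around $c$, I can choose $\tilde c\in B\cap H^{n+1}\big((0,1),\mathbb{R}^d\big)$; the affine segment $t\mapsto(1-t)c+t\tilde c$ stays in the convex set $B\subset\Ind$ and joins $c$ to $\tilde c$. Next, the path of curves $t\mapsto\tilde c(t\theta)$ takes values in $\Ind$ for every $t\in(0,1]$ (its $\theta$-derivative is $t\,\tilde c'(t\theta)\neq0$) and, by \Cref{claim: Cauchy shortshrink} applied with $\alpha=0$, is a finite-length path in $\Ind$; restricting it to $[t_0,1]$ joins $\tilde c$ to the curve $\theta\mapsto\tilde c(t_0\theta)$. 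Finally, \Cref{lem: shrort-stright line} — more precisely, the linear-interpolation path exhibited in its proof, which stays inside $\Ind$ for $t_0$ small enough — joins $\theta\mapsto\tilde c(t_0\theta)$ to the constant-speed straight segment $\theta\mapsto t_0\,\tilde c'(0)\theta$; this last reduction is also the content of \Cref{cor: shrinkshort limit point}.

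It remains to connect the straight segment $\theta\mapsto t_0\tilde c'(0)\theta=\ell_0 r_0\theta$, where $\ell_0=t_0|\tilde c'(0)|>0$ and $r_0=\tilde c'(0)/|\tilde c'(0)|\in S^{d-1}$, to $c_0$. Here I would use the hypothesis $d\ge2$: then $SO(d)$ is path-connected and acts transitively on $S^{d-1}$, so there is a path $\mathcal A\colon[0,1]\to SO(d)$ with $\mathcal A(0)=I_d$ and $\mathcal A(1)r_0=e_1$. The path $s\mapsto\mathcal A(s)(\ell_0 r_0\theta)$ lies in $\Ind$, since orthogonal maps preserve $|c'|\neq0$ (cf.\ \Cref{lemma: rot cost}), and it ends at $\ell_0 e_1\theta$; then $s\mapsto\big((1-s)\ell_0+s\big)e_1\theta$ (cf.\ \Cref{cor: shrink dist}) joins $\ell_0 e_1\theta$ to $c_0$. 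Concatenating all the pieces gives a path in $\Ind$ from $c$ to $c_0$.

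The sole use of $d\ge2$ is the rotation step, and it is essential: for $d=1$ one has $SO(1)=\{1\}$, which cannot connect the direction $+1$ to $-1$, matching the two components of $\In$. I expect the main point requiring care to be checking that every path in the chain genuinely avoids non-immersions: a naive straight-line homotopy from $c$ to a straight segment would in general pass through curves whose derivative vanishes somewhere, which is precisely why one first regularizes to $H^{n+1}$ and then detours through the shortening path of \Cref{claim: Cauchy shortshrink} and the interpolation of \Cref{lem: shrort-stright line}, both of which keep $|\partial_\theta(\cdot)|$ bounded away from $0$ along the path.
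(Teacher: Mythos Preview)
Your proposal is correct and follows essentially the same route as the paper: regularize $c$ (you use $H^{n+1}$ density, the paper uses smooth approximation), then invoke the shortening construction and the interpolation of \Cref{lem: shrort-stright line} (packaged in the paper as \Cref{cor: shrinkshort limit point}) to reach a constant-speed straight segment, and finally connect all straight segments via rotation and scaling (the paper cites the proof of \Cref{claim: straight-line Cauchy} for this last step, which is exactly your $SO(d)$ path plus rescaling). Your write-up is more explicit about why each piece stays inside $\Ind$, but the architecture is the same.
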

\begin{remark*}
    This is not the case for  $\mathcal{I}^n\big(S^1,\mathbb{R}^2\big)$ with constant coefficient metrics 
    of order $n\ge2$, for example, as shown in \cite[Section~2.9]{michor2006riemanniangeometriesspacesplane}.
    \end{remark*}
\begin{proof}
Locally, the topology of $\Ind$ identifies with that of $H^n\big((0,1),\mathbb{R}^d\big)$, thus, any curve $c\in\Ind$ can be approximated by smooth curves \cite[Theorem~11.24]{leoni2009first}. In particular, $c$ lies in the same connected component as some smooth curve $c_\infty$. By \Cref{cor: shrinkshort limit point}, we conclude that $c_\infty$ lies within the same connected component as some constant-speed straight-line segment curve. The proof of \Cref{claim: straight-line Cauchy} implies that all such segment curves lie within a single connected component. 
\end{proof}
\section{Discussion}
\label{chap: Discussion}
This thesis continues the line of previous studies on the completeness properties of immersed curve spaces presented in \Cref{subsec: Previous Work}. It focuses specifically on the space of Euclidean-valued immersed open curves endowed with constant coefficient reparametrization-invariant Sobolev-type Riemannian metrics, a space known to be metrically incomplete.  The central objective of this thesis is to examine \Cref{conj: Thesis question}, posed initially in \cite[Question~6.4]{Bauer2020}. \par

\Cref{subsec:example analysis} analyzes \cite[Example~6.1]{Bauer2020}, which illustrates the metric incompleteness of $\Ind$ by constructing a path consisting of straight-line segment curves that leaves the space in finite time.  \Cref{lemma:shrink cost}, \Cref{lemma: translation cost}, and \Cref{lemma: rot cost} establish new estimates on the length of paths in $\Ind$, which are then used in \Cref{claim: straight-line Cauchy} to generalize the existing example of diverging Cauchy sequences to a broader family. \par

In \Cref{subsec:one dimensional resluts}, we turn to $\In$, the space of real-valued immersions, as a way to isolate the properties of straight-line segment curves.  There, we prove \Cref{thm: distinct limit points}, which states that if $\In$ is endowed with Riemannian metrics of the form \eqref{eq: type of metric} with $a_2>0$, then the metric completion space contains, at least, an additional copy of $\diff$. Notably, this shows that \Cref{conj: Thesis question} does not hold in this case.  Furthermore, \Cref{cor: final char} provides a nearly complete characterization of the metric structure of the completion space in these settings.\par

The findings in \Cref{subsec:one dimensional resluts} naturally lead to several open questions regarding real-valued immersions: 
\begin{question}
\label{Qs: one dimensional}
Let $n\ge2$ and let $\In$ be endowed with Riemannian metrics $G$ of the form \eqref{eq: type of metric}:
\begin{enumerate}
    \item \label{Q: comp ident}Does the metric completion of  $\mathcal{I}^2\big([0,1],\mathbb{R}\big)$ with respect to $d_{G}$ consist  precisely of $\mathcal{I}^2\big([0,1],\mathbb{R}\big)\cup \mathbf{diff}^0_{H^2}\big([0,1]\big)$?
    \item \label{Q: higher N} Does \Cref{thm: distinct limit points} hold for $n>2$ when the assumption $a_2>0$ is relaxed?
    \item \label{Q: Higher N char}If the answer to \ref{Q: higher N} is positive, how can the metric completion space be characterized? 
\end{enumerate}\end{question}
Answering \Cref{Qs: one dimensional} will essentially lead to a full characterization of the metric completion of $\In$.
It may seem that standard Sobolev inequalities imply that the answer to part \ref{Q: higher N} of \Cref{Qs: one dimensional} is positive. However, this is not the case since  $\|\cdot\|_{L^2(ds)}+\|\cdot\|_{\dot H^n(ds)}$ does not necessarily  dominate the term $\|\cdot\|_{\dot H^2(ds)}$ uniformly on metric balls, as $\ell_c$ in the left hand side of \eqref{eq: Sobolev estimates} may be arbitrary small. \par
It is also worth noting that \Cref{thm: distinct limit points} relates the geodesic distance to \eqref{eq: delta def}, a quantity dependent only on the first derivatives of diffeomorphisms in $\diff$.  The technique used in the proof of \Cref{thm: distinct limit points} depends specifically on second-order derivatives; thus, it might be difficult to generalize to higher-order metrics, as parts \ref{Q: higher N} and \ref{Q: Higher N char} of \Cref{Qs: one dimensional} refer to. \par

That being said, the most significant direction for future research is extending the results of \Cref{subsec:one dimensional resluts} to $\Ind$ with $d > 1$, as these spaces are the primary motivation for studying immersed curve spaces. \Cref{claim: straight-line Cauchy} and \Cref{claim: Cauchy shortshrink} are currently the only constructions of diverging Cauchy sequences in $\Ind$, both resulting in sequences converging to the limit point of vanishing-length straight-line segment sequences.  The main points that remain to be answered are: 
\begin{question} Let $n\ge2$ and $d>1$, and let $\Ind$ be endowed with Riemannian metrics of the form \eqref{eq: type of metric}, 
\begin{enumerate}
\label{Qs: multi dim}
    \item \label{Q: distinct limit}Given $a_2>0$, does the result of \Cref{thm: distinct limit points} extend to $\Ind$, i.e., do any two uniformly-parametrized vanishing-length straight-line segment sequences, whose parametrizations differ, have distinct limit points in the metric completion?
    \item\label{Q: more points} Does $\overline{\Ind}\setminus\Ind$ contain points that are not limit points of vanishing-length straight-line segment sequences?
\end{enumerate}  
\end{question} 
Denote by $G^1,G^d$ the Riemannian metrics of the form \eqref{eq: type of metric} on  $\In$ and $\Ind$ with $d>1$, respectively, with the same sequence of coefficients $a_0,...,a_n$. Any affine embedding $\iota:\mathbb{R}\hookrightarrow\mathbb{R}^d$ induces a map \[\iota^*:\In\hookrightarrow\Ind,\] and for any $\varphi,\psi\in\diff$ and $\lambda\in\mathbb{R_+}$, a map 
\begin{equation}
    \begin{split}
    \Gamma\big(\lambda\varphi,\lambda\psi\big)&\hookrightarrow\Gamma\big(\iota(\lambda\varphi),\iota(\lambda\phi)\big)\\
    \gamma&\mapsto\iota^*\gamma\coloneqq \iota\circ\gamma.
    \end{split}
\end{equation}

\Cref{thm: distinct limit points} implies that  any $\gamma\in\iota^*\Gamma\big(\lambda\varphi,\lambda\psi\big)$ satisfies
\[0<\delta\lesssim_d \len_{G^d}\big(\gamma\big).\] 
By \Cref{lemma: rot cost}, it is possible to conclude that any $\gamma\in\Gamma\big(\iota(\lambda\varphi),\iota(\lambda\psi)\big)$, such that $\gamma(t)$ is a straight-line segment curve for all $t\in[0,1]$, satisfies 
\[0<\delta'\le\len_{G^d}(\gamma)\]
where $0<\delta'$ is some constant that may differ from $\delta$.
However, this is insufficient to answer part \ref{Q: distinct limit} of \Cref{Qs: multi dim} positively, since 
$\Gamma\big(\iota(\lambda\varphi),\iota(\lambda\psi)\big)$ contains paths that do not consist strictly of straight-line curves.  Thus,
\[d_{G^d}\big(\iota(\lambda\varphi),\iota(\lambda\phi)\big)\le d_{G^1}\big(\lambda\varphi,\lambda\phi\big),\]
and it still may be the case that $d_{G^d}\big(\iota(\lambda\varphi),\iota(\lambda\psi)\big)\overset{\lambda\rightarrow0}{\longrightarrow}0$. \par

Another possible approach to part \ref{Q: distinct limit} of \Cref{Qs: multi dim} is adjusting the proof of \Cref{thm: distinct limit points} to the setting of $\Ind$ with $d>1$.  The difficulty in this approach arises from the fact that $\frac{\partial_t\gamma'}{|\gamma'|}=\partial_t\big(\ln(|\gamma'|)\big)$ does not hold for $d>1$. Thus, we are not able to use \eqref{eq: delta proof2} to relate the geodesic distance to a quantity dependent on parametrizations alone. \par

Regarding question \ref{Q: more points}, a possible starting point may be identifying whether a vanishing-length sequence with curvature bounded away from zero may also be a diverging Cauchy sequence.  If so, may it still converge to the same limit point as a vanishing length straight-line segment sequence? 
Such questions remain open even for sequences with unbounded curvature,  as exemplified by the sequence of vanishing circles proposed in \eqref{eq: vanishing circles}. \par 

In light of the findings in \Cref{chap:results} and the challenges in constructing a diverging Cauchy sequence whose curvature remains bounded away from zero, we revise \Cref{conj: Thesis question} as follows:
 \begin{conj}
      \label{conj: updated}
      Let $n\ge2$ and $d\ge1$, and let $\Ind$ be endowed with Riemannian metrics $G$ of the form \eqref{eq: type of metric}. Then the metric completion of $\big(\Ind,d_{G}\Big)$ consists of 
     \[\Ind\cup\diff,\] 
     where each point $\varphi\in \diff$ represents the limit point of a sequence of vanishing-length straight-line segment curves which are $\varphi$-parametrized.
 \end{conj}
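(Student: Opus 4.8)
Towards \Cref{conj: updated}, the plan is to decompose it into two essentially separate tasks and pursue them independently. Task~(A), \emph{exhaustion}: show that every diverging Cauchy sequence in $\Ind$ has, as its limit point in $\overline{\Ind}$, the limit point of some $\varphi$-parametrized vanishing-length straight-line segment sequence, $\varphi\in\diff$ — in particular that $\overline{\Ind}\setminus\Ind$ carries no extra points (such as a limit of the vanishing circles of \eqref{eq: vanishing circles}) and is indexed by $\diff$ itself. Task~(B), \emph{distinctness}: show that distinct $\varphi\ne\psi\in\diffp$ yield distinct limit points, i.e.\ the $d>1$ analogue of \Cref{thm: distinct limit points}. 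For $d=1$ and $a_2>0$, (B) is \Cref{thm: distinct limit points} and (A) is essentially \Cref{lem: comp char} together with \Cref{cor: final char} — with the residual gap that \Cref{cor: final char} only pins the completion down to some $D$ with $\diff\subseteq D\subseteq\mathcal{D}^1\big([0,1]\big)$, so proving $D=\diff$ is itself still open. The program is to extend (B) to $d>1$, extend (A) to $d>1$, and in both settings try to relax $a_2>0$ to $a_n>0$ for $n>2$. I would attack (B) first, since it has the more rigid structure.

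For (B) I would transplant the mechanism of \Cref{lemma: dist-len ineq.}. The only use of $d=1$ there is the identity $\partial_t\gamma'/|\gamma'|=\partial_t\bigl(\ln|\gamma'|\bigr)$. For $d>1$ its replacement is the log-speed $m(t,\theta)\coloneqq\ln|\gamma'(t,\theta)|$, which satisfies $\partial_t m=\langle T,\nabla_{\partial_s}\gamma_t\rangle$, where $T=\gamma'/|\gamma'|$ is the unit tangent of the curve $\gamma(t)$. Projecting $\nabla^2_{\partial_s}\gamma_t$ onto $T$, using $\langle T,\partial_s w\rangle=\partial_s\langle T,w\rangle-\langle\kappa,w\rangle$ with $\kappa=\partial_s T$ the curvature vector of $\gamma(t)$, and integrating in arc length over $[\theta_0,\theta_1]$ should give
\[
\|\nabla^2_{\partial_s}\gamma_t\|_{L^1(ds)}\ \ge\ \Bigl|\,\partial_t m(t,\theta_1)-\partial_t m(t,\theta_0)\ -\ \int_{\theta_0}^{\theta_1}\langle\kappa,\nabla_{\partial_s}\gamma_t\rangle\,ds\,\Bigr|.
\]
Integrating in $t$, the first two terms reproduce exactly $\Delta(\varphi,\psi)$ of \Cref{def: delta phipsi} for the endpoints $\iota(\lambda\varphi),\iota(\lambda\psi)$ of a path $\gamma$, where $\iota$ is an affine embedding $\mathbb{R}\hookrightarrow\mathbb{R}^d$; the curvature-correction integral is the new feature. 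I would control it by a dichotomy on a near-minimizing $\gamma$: either $\sup_t\|\kappa\|_{L^2(ds)}$ (equivalently, the geodesic curvature of the base curves $\gamma(t)$) stays below a fixed threshold, in which case Cauchy--Schwarz makes the correction a small fraction of $\len(\gamma)$ — using $\|\nabla_{\partial_s}\gamma_t\|_{L^2(ds)}\lesssim\sqrt{G_{\gamma(t)}(\gamma_t,\gamma_t)}$ and then $\int_0^1\sqrt{G_{\gamma(t)}(\gamma_t,\gamma_t)}\,dt=\len(\gamma)$ — so that the estimate of \Cref{thm: distinct limit points} goes through, with $\max_t\ell_{\gamma(t)}$ bounded via \Cref{cor: boundness of ellc} on the ambient metric ball; or else the base curves bend substantially, and then the \emph{normal} part of $\nabla^2_{\partial_s}\gamma_t$ already forces $\len(\gamma)$ large, since transporting a straight segment to a curved curve and back must change the curvature, a change itself controlled by $\int_0^1\|\nabla^2_{\partial_s}\gamma_t\|_{L^2(ds)}\,dt$. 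Either way $d_G\bigl(\iota(\lambda\varphi),\iota(\lambda\psi)\bigr)$ stays bounded away from $0$ uniformly in $\lambda$, which gives (B); making the second horn of the dichotomy quantitative is the main obstacle here.

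For (A), given a diverging Cauchy sequence $(c_m)$ I would first invoke \cite[Theorem~6.3]{Bauer2020} to get $\ell_{c_m}\to0$, then rescale to unit length $\hat c_m\coloneqq c_m/\ell_{c_m}$ and try to show, using near-minimizing paths between the $c_m$, the scaling behaviour of $G$, and the Sobolev estimates \eqref{eq: Sobolev estimates}, that $(\hat c_m)$ is Cauchy in $H^n$ modulo rigid motions; its limit $\hat c_\infty$ would then be a unit-length immersion, and the final step is to prove $\hat c_\infty$ is a straight segment — any residual curvature being incompatible, via \Cref{lemma:shrink cost}, \Cref{lem: shrort-stright line} and \Cref{cor: shrinkshort limit point}, with $(c_m)$ being Cauchy. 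This last implication is the content of the second part of \Cref{Qs: multi dim}, and it is the real obstacle: the $H^n$-machinery of \cite{Bauer2020} is calibrated to spaces where $c\mapsto\ell_c^{-1}$ is bounded on metric balls, which fails for $\Ind$, so the rescaling step may not close; and ruling out persistent-curvature behaviour — showing a sequence such as \eqref{eq: vanishing circles} is either not Cauchy or collapses onto a straight-line limit point — lies outside all currently available techniques. In short, I expect (B) to be reachable by the dichotomy above, while (A), and specifically the unbounded-curvature regime, is the crux and the part most likely to need a genuinely new idea; the relaxation of $a_2>0$ for $n>2$ is a further difficulty, since then only an $n$-th-order analogue of the identity above is available, which as noted in \Cref{chap: Discussion} mismatches the first-order quantity $\Delta$.
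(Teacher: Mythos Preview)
The statement is a \emph{conjecture} that the paper explicitly leaves open; there is no proof in the paper to compare against. Your submission is not a proof but a research program, and you are candid about this throughout.

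Your decomposition into (A) exhaustion and (B) distinctness matches the paper's own framing of the open problems in \Cref{chap: Discussion} (cf.\ \Cref{Qs: one dimensional} and \Cref{Qs: multi dim}), and the obstacles you flag --- the failure of $\partial_t\gamma'/|\gamma'|=\partial_t\ln|\gamma'|$ for $d>1$, the unbounded-curvature regime exemplified by \eqref{eq: vanishing circles}, the residual gap $\diff\subseteq D\subseteq\mathcal{D}^1\big([0,1]\big)$ in \Cref{cor: final char}, and the relaxation of $a_2>0$ --- are exactly the ones the paper names. Your log-speed identity $\partial_t m=\langle T,\nabla_{\partial_s}\gamma_t\rangle$ and the resulting curvature-correction term are a concrete step beyond what the paper offers for (B); the dichotomy is plausible, though the second horn (extracting a uniform lower bound on $\len(\gamma)$ from bending alone) is where the real work lies and you have not indicated how to make it quantitative. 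For (A) your rescaling heuristic is reasonable, but as you yourself note, the step from $(c_m)$ Cauchy to $(\hat c_m)$ Cauchy-mod-rigid-motions has no support in the existing estimates once $\ell_c^{-1}$ is unbounded, and the persistent-curvature question is genuinely open.

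In short: there is no discrepancy with the paper because the paper proves nothing here; your program is well-aligned with the paper's discussion and adds one genuinely new idea for (B), but it remains a program, not a proof.
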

If \Cref{conj: updated} holds, it would imply that the metric completion of the corresponding shape space
\[\Ind\slash\diff\]
is given by 
\[\Big(\Ind\slash\diff\Big)\cup\{0\},\]
where $0$ represents the limit point of all vanishing length shape sequences. Essentially, this would deem \Cref{conj: Thesis question} true for the shape space rather than the immersion space.

\bibliographystyle{alpha} 
\bibliography{thesis}

\end{document}